\definecolor{darkblue}{rgb}{0,0,0.7}
\definecolor{darkred}{rgb}{0.7,0,0}
\setlist{  
  listparindent=\parindent,
  parsep=0pt,
}
\newtheorem{theorem}{Theorem}[section]
\newtheorem{conjecture}{Conjecture}[section]
\theoremstyle{definition}
\newtheorem{definition}{Definition}[section]
\newtheorem{example}{Example}[definition]
\theoremstyle{theorem}
\newtheorem{prop}[theorem]{Proposition}
\newtheorem{lemma}[theorem]{Lemma}
\newtheorem{corollary}[theorem]{Corollary}
\newtheorem{claim}{Claim}[theorem]
\newcommand{\totalkcut}{\Delta_{k}^t}
\newcommand{\totalthreecut}{\Delta_{3}^t}
\newcommand{\totalkcutn}{\Delta_{k-1}^t}
\newcommand{\kcut}{\Delta_{k}}
\newcommand{\twogg}{\mathcal{G}_{2\times n}}
\newcommand{\twoggn}{\mathcal{G}_{2\times \left(n+1\right)}}
\newcommand{\twoggp}{\mathcal{G}_{2\times n}'}
\newcommand{\mgg}{\mathcal{G}_{m\times n}}
\newcommand{\threegg}{\mathcal{G}_{3\times n}}
\newcommand{\mthreegg}{\mathcal{G}_{3\times m}}
\newcommand{\mthreeggn}{\mathcal{G}_{3\times \lno m+1 \rno}}
\newcommand{\lno}{\left(}
\newcommand{\rno}{\right)}
\newcommand{\lsq}{\left[}
\newcommand{\rsq}{\right]}
\newcommand{\lcu}{\left\{}
\newcommand{\rcu}{\right\}}
\newcommand{\bbS}{\mathbb{S}}
\newcommand{\K}{\mathcal{K}}
\newcommand{\lk}{\mathrm{lk}}
\newcommand{\del}{\mathrm{del}}
\newcommand{\st}{\mathrm{st}}
\newcommand{\cone}{\mathrm{Cone}}
\definecolor{wwwwww}{rgb}{0.4,0.4,0.4}
\theoremstyle{remark}
\pgfplotsset{compat=1.15}
\begin{document}

\title{Topology of total cut complexes and cut complexes of grid graphs}

\author{Himanshu Chandrakar}
\author{Nisith Ranjan Hazra}
\author{Debotosh Rout}
\author{Anurag Singh}

\newcommand{\Addresses}{
  \bigskip
  \footnotesize

  \textsc{Department of Mathematics, Indian Institute of Technology Bhilai, India}\par\nopagebreak
  \textit{E-mail address}: HC: \texttt{himanshuc@iitbhilai.ac.in} \\
  \hspace*{2.45cm}NRH: \texttt{nisith.mds2024@cmi.ac.in} \\
  \hspace*{2.45cm}DR: \texttt{dr.24p10068@nitdgp.ac.in}\\
  \hspace*{2.45cm} AS: \texttt{anurags@iitbhilai.ac.in}
}

\keywords{Total cut complexes, cut complexes, grid graphs, shellability}
\subjclass[2010]{05C69, 05E45, 55P15, 57M15}

\begin{abstract}
    Inspired by the work of Fr{\"o}berg (1990) and Eagon and Reiner (1998), Bayer et al. recently introduced two new graph complexes: \emph{total cut complexes} and \emph{cut complexes}. In this article, we investigate these complexes specifically for (rectangular) grid graphs, focusing on $2 \times n$ and $3 \times n$ cases. We extend and refine the work of Bayer et al., proving and strengthening several of their conjectures, thereby enhancing the understanding of these graph complexes' topological and combinatorial properties.

\end{abstract}

\maketitle

\section{Introduction}

A \emph{graph complex} is a simplicial complex constructed from a graph $G$ based on certain combinatorial properties of $G$. One of the most influential examples of a graph complex is the \emph{neighborhood complex}, introduced by Lov{\'a}sz in 1978 \cite{lovasz1978}. This complex was instrumental in solving the Kneser conjecture, where Lov{\' a}sz provided a lower bound for the chromatic number of a graph in terms of the topological connectivity of its neighborhood complex. Since then, numerous graph complexes have been defined and studied (see, for instance, \cite{Henryvietoris, BK06, Barmakstarclusters,  Singh20, Singhhighermatching, Wachschessboard}), revealing various connections between their topological properties and the combinatorial properties of graphs (to mention a few, see \cite{Alonchromatic, Bjornercombinatorial, deshpandedomination, DS12, matsushita2023dominance, Meshulamdomination}). Additionally, graph complexes have found applications in many other areas of mathematics (see, for instance, \cite{Aharoniindependent, Bright04, camara2016topological, chanviralevolution, Dochtermannwarmath}). Beyond these connections, graph complexes have emerged as significant objects of study in their own right, highlighting their independent mathematical interest. For more comprehensive insights into the intrinsic topology of graph complexes, readers are referred to the books of Jonsson \cite{jonsson2008simplicial} and Kozlov \cite{kozlovCombAlgTopo}.

One notable intersection of graph complexes with other mathematical domains is in commutative algebra (see, for instance, \cite{Dochter09, dochter12, hibi92, hoch16, Stan96, van13}). A seminal connection was made by Fr{\"o}berg through his celebrated theorem \cite[Theorem 1]{froberg1990}, which bridges commutative algebra and graph theory via topological methods. 
We briefly outline this connection before proceeding further. 

For a graph $G$, the \emph{clique complex}, say $\Delta(G)$, is the simplicial complex consisting of all subsets of vertices that form cliques in $G$; i.e., every two vertices in such a subset are adjacent in $G$. Fr{\"o}berg’s theorem concerns monomial ideals generated by quadratic terms and establishes that the Stanley–Reisner ideal $I_\Delta$ associated with a simplicial complex $\Delta$ has a $2$-linear resolution if and only if $\Delta$ is the clique complex $\Delta(G)$ of a chordal graph $G$. 
Eagon and Reiner later reformulated \cite[Proposition 8]{ER98} this result using the Alexander dual of the clique complex $\Delta(G)$, denoted $\Delta_2(G)$, whose facets correspond to the complements of independent sets of size 2. 

One can observe that the Fr{\"o}berg's theorem concerns ideals generated by quadratic monomials. A natural question to ask is about the ideals generated by higher-degree monomials. Motivated by this, Bayer et al. recently explored the following two generalizations of $\Delta_2(G)$ that arise from ideals generated by higher-degree monomials.

\begin{enumerate}
    \item Viewing $\Delta_2(G)$ as the simplicial complex whose facets are the complements of independent sets of size two in $G$, we obtain a natural generalization by taking complements of independent sets of size $k$ for any $k \ge 2$. This yields the \emph{total $k$-cut complex}, denoted $\Delta_k^t(G)$ (see \cref{def:totalcutcomplex}).

    \item Alternatively, if we interpret $\Delta_2(G)$ as the complex whose facets correspond to complements of two-element subsets of vertices whose induced subgraphs are disconnected, then we can generalize by considering complements of subsets of size $k \ge 2$ whose induced subgraph is disconnected in $G$. This gives the \emph{$k$-cut complex}, denoted $\Delta_k(G)$ (see \cref{def:cutcomplex}).
\end{enumerate}

Bayer et al. \cite{bayer2024cut, bayer2024total, bayer2024cutB} investigated these two complexes in detail, focusing on topological properties analogous to those appearing in Fr{\"o}berg’s theorem. They examined the total cut and 
$k$-cut complexes across various families of graphs, including complete multipartite graphs, cycles, squared cycles, prisms over cliques, trees, threshold graphs, triangle-free graphs, and grid graphs. More recently, Chouhan et al. \cite{Chauhan2025} studied the 3-cut complexes of squared cycle graphs and analyzed their shellability.

In this article, we focus on the total cut complexes and cut complexes associated with $m \times n$ grid graphs, denoted $\mgg$. Graph complexes arising from grid graphs have attracted considerable attention in topological combinatorics because of the inherent difficulty in determining their topological properties (see, e.g., \cite{BH17, chandrakar2024perfect,Matsushitamatching,Matsushitantimes4}). Bayer et al. showed that the complex $\Delta_2^t(\mgg) = \Delta_2(\mgg)$ is homotopy equivalent to a wedge of equi-dimensional spheres. Based on computational experiments, they further proposed the following conjecture.

\begin{conjecture}[\cite{bayer2024total}, Conjecture 5.2] 
 The nonzero Betti numbers satisfy the following formulas:
        \begin{equation*}
            \begin{split}
                b_{2n-2k}(\Delta_k^t(\twogg))& =\binom{n-1}{k-1}, ~k\geq 2,\\
                b_{3n-6}(\Delta_3^t(\threegg))& =\binom{2n-2}{2}.
            \end{split}
        \end{equation*}
\end{conjecture}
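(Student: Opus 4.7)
The natural route is to first establish that $\totalkcut(\twogg)$ and $\totalthreecut(\threegg)$ are shellable (or, better, vertex-decomposable), so that each is homotopy equivalent to a wedge of spheres of the claimed top dimension. The target top Betti number then coincides with the number of homology facets of a shelling, and the task reduces to enumerating these combinatorially.

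\textbf{The $2 \times n$ case.} I would proceed by induction on $n$ for fixed $k \geq 2$, driven by a decomposition based on a corner vertex. Pick a corner vertex $v$ of the grid and split
\[
\totalkcut(\twogg) = \del_{\totalkcut(\twogg)}\lno v \rno \cup \st_{\totalkcut(\twogg)}\lno v \rno,
\]
with intersection $\lk_{\totalkcut(\twogg)}(v)$. If $\del(v)$ can be identified (up to homotopy) with $\totalkcut$ of a $2 \times (n-1)$ grid, possibly augmented by a pendant reflecting the remaining vertex of the rightmost column, and $\lk(v)$ with the corresponding $\totalkcutn$-complex, then the reduced Mayer--Vietoris sequence produces a recurrence of the form
\[
b_{2n-2k}\lno \totalkcut(\twogg) \rno = b_{2(n-1)-2k}\lno \totalkcut(\mathcal{G}_{2\times(n-1)}) \rno + b_{2(n-1)-2(k-1)}\lno \totalkcutn(\mathcal{G}_{2\times(n-1)}) \rno,
\]
which is exactly Pascal's recurrence $\binom{n-1}{k-1} = \binom{n-2}{k-1} + \binom{n-2}{k-2}$. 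Combined with the base case $k = 2$ already handled by Bayer et al.\ and the trivial boundary $n = k$, induction closes the argument. A cleaner alternative is to exhibit a direct bijection between homology facets in a shelling and $(k-1)$-subsets of $\lcu 1, \ldots, n-1 \rcu$, each subset encoding $k-1$ ``cut columns'' that split the $n$ columns into $k$ consecutive blocks.

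\textbf{The $3 \times n$ case with $k = 3$.} The count $\binom{2n-2}{2}$ hints that homology facets are parametrized by unordered pairs taken from a $(2n-2)$-element index set. A plausible choice is the set of ``partial-cut locations'': each internal column $1 \le j \le n-1$ offers two horizontal seams between the three rows, contributing $2(n-1) = 2n-2$ locations where a partial cut may be placed. The same corner-vertex induction should yield a recurrence for $b_{3n-6}(\totalthreecut(\threegg))$ matching $\binom{2n-2}{2}$; alternatively, an acyclic matching in the sense of discrete Morse theory whose critical cells are indexed by such pairs would give the count directly.

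\textbf{Principal obstacle.} The difficulty is twofold. First, establishing shellability of $\totalkcut(\mgg)$ requires a precise description of its facets (complements of minimal total $k$-cut sets in a grid) together with a verification of an appropriate lexicographic shelling; for the $3 \times n$ case this will involve case analysis based on the geometric shape of the removed set (a full column versus two or three vertices distributed across adjacent columns). Second, for $\totalthreecut(\threegg)$, exhibiting the correct bijection producing $\binom{2n-2}{2}$ is the essential combinatorial challenge, since unlike the cleanly stratified ``cut column'' interpretation in the $2 \times n$ case, the three-row grid admits several qualitatively distinct facet shapes that must all be accounted for in a single uniform count.
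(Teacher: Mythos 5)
Your high-level strategy for the $2 \times n$ case — induct on $n$, split at a corner vertex $v$, identify the link with a smaller total cut complex, and feed a Pascal-type recurrence — is essentially the skeleton of the paper's proof. The paper does exactly this with $v = b_{n}$ (after reindexing, $b_{m+1}$), uses \Cref{coro:link of a vertex} plus \Cref{thm: suspension of previous graph} to get $\lk(b_{m+1}) \simeq \Sigma\lno\totalkcut(\mathcal{G}_{2\times m})\rno$, and then applies \Cref{lemma: finding homotopy using link and deletion} to splice link and deletion into a wedge. So the shape of your argument is sound and the recurrence $\binom{n-1}{k-1} = \binom{n-2}{k-1} + \binom{n-2}{k-2}$ you extract is precisely the one the paper ends up using.

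However, there is a genuine gap in your treatment of the deletion. You assert that $\del_{\totalkcut(\twogg)}(v)$ "can be identified (up to homotopy) with $\totalkcut$ of a $2 \times (n-1)$ grid, possibly augmented by a pendant"; this is not the case, and it is where the real work lies. The deletion of a corner vertex is not the total cut complex of a smaller graph (neither \Cref{lemma: link} nor \Cref{lemma: deletion of v} produces such an identification directly, since $b_{m+1}$ has two neighbours and the deletion is a star, not a full cut complex). The paper instead proves the auxiliary \Cref{lemma: del(b_n) = st(a_(n-1)) U  st(b_(n-1))}, covering $\del(b_{m+1})$ by two stars, applies \Cref{lemma: bjorner's lemma 10.4(b)} to reduce to $\st(a_m) \cap \st(b_m)$, and then decomposes that intersection into subcomplexes $K_1,\dots,K_k$ whose pairwise intersections satisfy a nesting condition (\Cref{claim: homotopy of total k-cut of 2×n gg}). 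Summing the pieces via \Cref{prop: binomial result} yields the $\binom{m-1}{k-2}$ count — the Hockey Stick identity, not a single-step identification. Your Mayer--Vietoris framing would also work mechanically, but it would run into precisely the same need to compute the homotopy type of the deletion, so the recurrence does not come for free as you suggest.

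For the $3 \times n$, $k = 3$ case, what you offer is a heuristic for the count $\binom{2n-2}{2}$ rather than an argument. The paper's proof here is substantially more involved: it requires an intermediate result (\Cref{lemma: homotopy of total 3-cut of (3×n)gg'}) for the auxiliary graph $\threeggp$, another two-step star decomposition with a four-piece cover $K_1,\dots,K_4$ (\Cref{claim: 1 (homotopy of total 3-cut of 3×n gg)}), and, critically, two appendix computations of $\Delta_2^t(H_1)$ and $\Delta_2^t(\mthreeggone)$ via explicit discrete Morse matchings — which must then be adjusted when certain faces are excluded from the intersection complexes $L_1 \cap L_2$ and $K_1 \cap K_2$. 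Your "partial-cut location" indexing is a plausible guess for what the critical cells might parametrize, but without the Morse matching (or an actual shelling) it stays a guess, and the shellability of $\totalkcut(\mgg)$ that you invoke as the foundation is not established anywhere in the paper (only $\Delta_k(\twogg)$, the ordinary cut complex, is shown shellable).
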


As our first main result, we confirm the above conjecture and further strengthen it by establishing the following theorem.

\begin{theorem}[Theorems \ref{thm: homotopy type of total k-cut complex of 2n-gg} and \ref{thm: homotopy of 3-cut complex of (3×n)-gg}]

We have the following homotopy equivalences, 
\begin{equation*}
            \begin{split}
                \totalkcut\lno \twogg\rno& \simeq \bigvee\limits_{\binom{n-1}{k-1}} \bbS^{\lno 2n-2k\rno}, ~ \text { for any } n\geq k\geq 2, \text{ and }\\
                \totalthreecut \lno \threegg \rno& \simeq\bigvee\limits_{\binom{2n-2}{2}}\bbS^{3n-6}, ~ \text { for any } n\geq 2.
            \end{split}
\end{equation*}
\end{theorem}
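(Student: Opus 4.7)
The plan is to prove both homotopy equivalences by induction on $n$---and, for the first statement, simultaneously on $k$. The base cases ($n = k$ for the $2 \times n$ statement, and $n = 2$ for the $3 \times n$ statement) are small enough to be verified by direct inspection of the complexes.

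For the inductive step, I would fix a corner vertex $v = (1, n)$ in the last column and use the standard decomposition
\begin{equation*}
\totalkcut\lno\twogg\rno \;=\; \del\lno v, \totalkcut(\twogg)\rno \;\cup\; \st\lno v, \totalkcut(\twogg)\rno,
\end{equation*}
whose intersection is $\lk(v, \totalkcut(\twogg))$. Since the star is contractible, $\totalkcut(\twogg)$ is homotopy equivalent to the mapping cone of the inclusion $\lk(v, \totalkcut(\twogg)) \hookrightarrow \del(v, \totalkcut(\twogg))$. One checks directly from the definition that $\lk(v, \totalkcut(G)) = \totalkcut(G - v)$ for every graph $G$ (because a cut set in $G$ containing $v$ corresponds, after removing $v$, to a cut set in $G - v$ with exactly the same induced subgraph on the complement). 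Hence $\lk(v, \totalkcut(\twogg)) = \totalkcut(\twogg - v)$, the total $k$-cut complex of the $2 \times n$ grid with its top-right corner deleted. The deletion $\del(v, \totalkcut(\twogg))$ requires more care, since $v$ remains in $\twogg$ and can still contribute a component in the complement of a cut set; this will be analyzed by a second pushout with respect to the other last-column vertex $(2, n)$.

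The combinatorial heart of the argument is to show that, after these pushouts and suitable deformation retractions, everything reduces to total cut complexes on $\mathcal{G}_{2 \times (n-1)}$ at parameters $k$ and $k-1$ for the first statement, and on $\mathcal{G}_{3 \times (n-1)}$ (possibly joined with auxiliary simplices encoding the residual last-column data) for the second. The inductive hypothesis, combined with Pascal's identity $\binom{n-1}{k-1} = \binom{n-2}{k-2} + \binom{n-2}{k-1}$, then produces exactly the asserted count of spheres in the $2 \times n$ case, with the dimension shift handled by the suspension coming from coning.

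The main obstacle will be the analysis of the deletion $\del(v, \totalkcut(\twogg))$ and the verification that the cofibre sequences obtained from the pushouts split into wedges of spheres rather than giving more complicated attaching maps; this typically requires exhibiting an explicit null-homotopy of the link-to-deletion inclusion after restricting to each stratum. For the $3 \times n$ statement this is notably more delicate: the last column now has three vertices, so a single pushout does not suffice, and one must either peel off the last-column vertices one at a time (yielding an iterated mapping cone to unwind) or stratify the complex by which subset of the last column is contained in a given face. The challenge is to carefully check that the contributions from each stratum combine \emph{without cancellation} into $\binom{2n-2}{2}$ spheres of dimension $3n-6$, which requires tracking both the combinatorics (so that the counts match $\binom{2n-2}{2} - \binom{2n-4}{2} = 4n-7$ new spheres at each inductive step) and the topology (so that no spurious sphere appears in a lower or higher dimension).
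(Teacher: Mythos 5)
Your high-level plan---induction on $n$ with a link/deletion decomposition at a last-column corner vertex, using $\lk_{\Delta_k^t(G)}(v) = \Delta_k^t(G \setminus \{v\})$ and Pascal's identity to close the induction---matches the paper. But the proposal defers essentially all of the real combinatorial content, and the one concrete step you do specify is not the one that works. You suggest analyzing $\del_{\Delta_k^t(\mathcal{G}_{2\times n})}(v)$ via ``a second pushout with respect to the other last-column vertex,'' whereas the paper's Lemma \ref{lemma: del(b_n) = st(a_(n-1)) U  st(b_(n-1))} decomposes $\del(b_n)$ as a union of two stars taken with respect to the \emph{second-to-last} column vertices $a_{n-1}$ and $b_{n-1}$. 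This is the load-bearing observation: both stars are contractible, reducing everything to their intersection, which the paper then breaks (Claim \ref{claim: homotopy of total k-cut of 2×n gg}) into $k$ contractible pieces $K_1, \dots, K_k$ with $K_i \cap K_j \subseteq K_1$, so that Bj\"orner's wedge lemma applies. Each $K_1 \cap K_i$ turns out to be $\Delta_{k-i+1}^t(\mathcal{G}_{2\times(m-i+1)})$, and summing their sphere counts requires the \emph{hockey-stick} identity (Proposition \ref{prop: binomial result}) to produce $\binom{m-1}{k-2}$; a single Pascal step, as in your sketch, only enters at the very end when the link and deletion are recombined. None of this structure is visible from the plan you've written, and a ``pushout with respect to $(2,n)$'' would not by itself reveal it, since $a_n$ and $b_n$ are adjacent so both cannot simultaneously lie in the independent set defining a facet's complement.

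For the $3\times n$ case the gap is larger still. The paper does not simply peel off last-column vertices one at a time. It first proves an intermediate homotopy lemma for the auxiliary graph $\mathcal{G}_{3\times m}'$ (Lemma \ref{lemma: homotopy of total 3-cut of (3×n)gg'}), introduces several specially tailored subgraphs ($H_1$, $H_2$, $H_3$, $\mathcal{G}_{3\times m}^{(1)}$, $\mathcal{G}_{3\times m}^{(2)}$), performs two further star decompositions with carefully constructed contractible pieces $L_i$ and $K_i$, and---crucially---resorts to discrete Morse theory (the entire Appendix) to compute certain intersections such as $K_1 \cap K_2$, which arise as total 2-cut complexes of irregular graphs with a handful of prescribed faces removed. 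Your proposal correctly anticipates that the hard part is ruling out cancellation between strata, but it offers no concrete mechanism for this, and the Morse-theoretic bookkeeping the paper needs to finish the argument cannot be recovered from the plan as stated.
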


In a subsequent work \cite{bayer2024cutB}, Bayer et al. further examined the complexes $\Delta_k(\mgg)$ for $k = 3$ and $k = 4$, determining their homotopy types and proving that $\Delta_3(\mgg)$ is shellable. They also proposed the following conjecture.

\begin{conjecture}[\cite{bayer2024cutB}, Conjecture 5.9]  $\Delta_k(\mgg)$ is shellable for all $3 \leq k \leq mn -3$.
\end{conjecture}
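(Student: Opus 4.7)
The plan is to prove the stronger property that $\kcut(\mgg)$ is \emph{vertex decomposable}, which implies shellability, via induction on the total number of vertices. Because the class of rectangular grids is not closed under vertex deletion, the induction is carried out over a broader class $\mathcal{C}$ of \emph{staircase subgrids}: induced subgraphs of some $\mgg$ obtained by iteratively deleting corner vertices. The class $\mathcal{C}$ is closed under removing a current corner, making the induction on $N := |V(G)|$ viable.

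For $G \in \mathcal{C}$ with $N$ vertices, fix $k$ with $3 \le k \le N-3$ and choose a corner vertex $v$ of $G$. One first verifies the identification $\lk_v \kcut(G) = \kcut(G - v)$ as a complex on $V(G) \setminus \{v\}$. Since $G - v \in \mathcal{C}$ has $N - 1$ vertices, the inductive hypothesis yields vertex decomposability of the link whenever $k \le N - 4$. Next, $\del_v \kcut(G)$ consists of $\lk_v \kcut(G)$ together with extra facets of the form $(V(G) \setminus W) \setminus \{v\}$ coming from $k$-cuts $W$ containing $v$; I would show it is vertex decomposable by choosing a secondary shedding vertex, typically a neighbor of $v$ which is itself a corner of $G - v$ in $\mathcal{C}$. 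Finally, one verifies the shedding condition at $v$: no facet of $\del_v \kcut(G)$ is a proper subface of a facet of $\kcut(G)$ containing $v$. This reduces to a combinatorial claim about $k$-cuts of staircase graphs: any $(k-1)$-subset $W'$ of $V(G) \setminus \{v\}$ with $W' \cup \{v\}$ a $k$-cut but not contained in a $k$-cut avoiding $v$ must already be a $(k-1)$-cut, which can be checked directly.

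The boundary value $k = N - 3$ needs separate treatment, since the link induction reduces to the invalid case $k = (N-1)-2$. For this case, the $k$-cuts are complements of $3$-subsets, and I would either invoke an Alexander-duality type argument connecting $\Delta_{N-3}(G)$ to the connectivity structure of $3$-subsets of $V(G)$, or write down an explicit shelling order exploiting the small codimension. The lower endpoint $k = 3$ is covered by the existing shellability theorem of Bayer et al., and small staircase base cases ($N \le 6$) can be checked by direct enumeration.

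The most delicate step is the analysis of $\del_v \kcut(G)$: its facets come from two qualitatively different sources --- cuts avoiding $v$ (the link) and cuts containing $v$ --- and showing vertex decomposability requires identifying a secondary shedding vertex whose link and deletion both fit the inductive framework on $\mathcal{C}$. A careful combinatorial description of the $k$-cuts of grid-like graphs that contain a prescribed corner, together with the flexibility provided by the staircase class, should give enough room to carry this out, but it will constitute the bulk of the work.
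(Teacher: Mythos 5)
The paper does not prove this conjecture in full; it only establishes the $m=2$ case (Theorem~\ref{lemma: Shellability of 2n'gg}), using Jonsson's recursive shellability over the small family $\twogg$, $\twoggp$, $\twoggn$ and their deletions, with explicit shelling orders for the innermost pieces. Your proposal, by contrast, aims at the full conjecture for all $m$ via vertex decomposability over a class of ``staircase'' subgrids. The spirit is parallel to what the paper does for $m=2$ --- the paper's intermediate graph $\twoggp$ is precisely a one-step staircase, and the vertices $b_{n+1}$, $a_{n+1}$, $b_n$ used as shedding vertices are corner vertices in the sense you describe --- but the paper's argument does not generalize for free to larger $m$, where the family of staircase subgraphs and the structure of their disconnecting $k$-sets become substantially more intricate.

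The main issue is that what you have written is a proof plan, not a proof, and you concede as much: the analysis of $\del_v\kcut(G)$, which you identify as ``the most delicate step,'' is left entirely open (``should give enough room to carry this out, but it will constitute the bulk of the work''), and the boundary case $k=N-3$ is only gestured at with two unexecuted alternatives. Moreover, your combinatorial translation of the shedding condition is not right. The correct form is: for every disconnecting $k$-set $W$ with $v\notin W$, there exists $w\in W$ such that $(W\setminus\{w\})\cup\{v\}$ is still a disconnecting $k$-set (a ``swap'' condition, which is what the paper's Lemmas~\ref{lemma: 2n is shedding vertex of augumented grid graph}--\ref{lemma: a_n+1 is shedding vertex of del(b_n+1)} verify by explicit case analysis). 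Your formulation in terms of a $(k-1)$-subset $W'$ being a $(k-1)$-cut does not capture this: a $(k-1)$-set has complement of size $N-k+1 > N-k$, so it is not a facet of $\Delta_{k}$ at all, and the conclusion you state has no bearing on whether $v$ is a shedding vertex. Without the correct shedding criterion, a verified vertex decomposition of the deletion, and a treatment of $k=N-3$, the argument does not constitute a proof of the conjecture, even in the $m=2$ case that the paper handles.
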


We verify this conjecture in the case $m=2$. More precisely, we establish the following result.

\begin{theorem}[\Cref{lemma: Shellability of 2n'gg}]
   The complex $\kcut \lno \twogg \rno$ is shellable for all $ 3 \leq k \leq 2n-2 $. 
\end{theorem}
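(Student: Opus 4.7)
\textbf{Setup.} Write $V = V\lno\twogg\rno$ for the vertex set. The facets of $\kcut\lno\twogg\rno$ are the complements $V \setminus W$ of those $k$-subsets $W \subseteq V$ for which $\twogg[W]$ is disconnected; each such facet has dimension $2n-k-1$. Label the vertices column by column as $v_{2j-1} = (1,j)$ and $v_{2j} = (2,j)$ for $j = 1, \dots, n$. My plan is to exhibit an explicit shelling order on the facets of $\kcut\lno\twogg\rno$.

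\textbf{Shelling order.} I would order the facets by the lexicographic order on the complements $W$, viewed as sorted tuples $(w_1 < \dots < w_k)$, possibly refined by a secondary criterion that groups facets according to the connected-component structure of $\twogg[W]$. The shelling condition asks: for every pair of facets $F_j, F_i$ with $j < i$, there is a facet $F_\ell$ with $\ell < i$ satisfying $F_j \cap F_i \subseteq F_\ell \cap F_i$ and $|F_\ell \cap F_i| = |F_i| - 1$. Translating to the complements, for any disconnected $k$-subsets $W_j < W_i$ in this order, I must produce a disconnected $k$-subset $W_\ell < W_i$ that differs from $W_i$ in exactly one vertex, with the new vertex lying in $W_j$. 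The natural choice is to take $v = \min\lno W_j \setminus W_i\rno$ and to swap it against a carefully chosen $u \in W_i \setminus W_j$.

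\textbf{Main technical step and obstacle.} The central challenge is an \emph{exchange lemma}: given a disconnected $k$-subset $W_i$ in $\twogg$ and a candidate vertex $v \in V \setminus W_i$ (inherited from $W_j$), produce $u \in W_i$ so that $\lno W_i \setminus \{u\} \rno \cup \{v\}$ remains disconnected and comes before $W_i$ in the order. I plan to exploit the column-by-column structure of the $2 \times n$ grid: partition $\twogg[W_i]$ into its connected components and pick $u$ from a component whose removal, combined with the insertion of $v$, does not reconnect the remaining components; typically this means taking $u$ to be the extremal vertex of the component of $\twogg[W_i]$ farthest from $v$. The delicate cases are (i) when $\twogg[W_i]$ has exactly two components, one of them a singleton, restricting which $u$ preserves disconnectedness; (ii) when $v$ is adjacent to vertices in several components of $\twogg[W_i]$; and (iii) the boundary case $k = 2n-2$, where the facets are edges of a $1$-dimensional complex and shellability reduces to proving connectedness of the facet-adjacency graph---an improvement beyond the conjectured upper bound $k \leq 2n-3$ that I expect to handle separately by direct construction, using the fact that the $2$-vertex cuts of $\twogg$ share many common vertices. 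The restricted two-row geometry of $\twogg$ ensures these situations can be enumerated via a finite case analysis.
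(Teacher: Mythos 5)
Your proposal takes a genuinely different route from the paper: you propose to exhibit a single explicit shelling order on the facets of $\kcut\lno\twogg\rno$ directly, whereas the paper uses Jonsson's recursive characterization of shellability (\Cref{def: shellability definition we used}) together with induction on $n$, identifying a chain of shedding vertices ($b_{n+1}$ in $\kcut\lno\twoggn\rno$, then $a_{n+1}$ in the link and deletion, then $b_n$, and so on) and bottoming out with explicit shelling orders only on small, highly structured residual complexes (joins of simplices with skeleta of simplices). A direct shelling order, if carried out, would be a cleaner and more self-contained argument; the paper's recursive approach instead localizes the combinatorics to a few easy lemmas, at the cost of carrying several nested link/deletion computations.

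However, as written the proposal has a genuine gap: the exchange lemma is the entire content of the argument, and it is not proven. You state that given disconnected $W_j < W_i$ one must find a disconnected $W_\ell < W_i$ of the form $\lno W_i \setminus \{u\}\rno \cup \{v\}$ with $v \in W_j \setminus W_i$, and you correctly identify that the choice of $u$ is delicate, but you then only enumerate the problematic cases (singleton components, $v$ adjacent to several components, $k = 2n-2$) without resolving any of them. Moreover, you hedge on whether the shelling order is pure lexicographic on complements or "refined by a secondary criterion," and you do not specify that refinement. This matters: the explicit shelling orders that the paper does use in its sub-arguments are not pure lexicographic — they first group facets by whether they contain a distinguished vertex (e.g.\ $a_n$ or the pair $\{a_n, b_n\}$) and only apply lexicographic order within groups — which suggests a naive lex order on $W$ alone is unlikely to work. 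You would need to (a) fix the order precisely, (b) prove that for the chosen $v$ there always exists a valid $u$, which requires an honest case analysis on the component structure of $\twogg[W_i]$ and where $v$ sits relative to it, and (c) verify that the resulting $W_\ell$ actually precedes $W_i$ in the chosen order, which is not automatic when the order is not pure lex. Also note the paper treats $k = 2n-2$ by citing \cite[Proposition 5.1]{bayer2024cutB} and restricts the proof proper to $3 \leq k \leq 2n-3$; your case (iii) would be new content beyond what the paper proves, which is fine, but again it is only sketched.
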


This article is organized as follows: In the next section, we discuss all the important definitions and preliminary results. \Cref{section: Total k-cut of 2n gg} discusses the homotopy type of the total $k$-cut complex of $\lno 2 \times n \rno$-grid graph, followed by \Cref{section:total cut 3*k} which discusses the homotopy type of the total $3$-cut complex of $\lno 3 \times n \rno$-grid graph. The main result proved in \Cref{section:total cut 3*k} uses tools from discrete Morse theory. In \Cref{section: shellability of k-cut of 2n-gg}, we discuss the shellability of the $k$-cut complex of $\lno 2 \times n \rno$-grid graph, where we use the recursive definition of shellability given by \cite[Definition~3.25]{jonsson2008simplicial} to prove the main result.


\section{Preliminaries}
\subsection{Graph theoretic notations}
A (simple) \textit{graph}, say $G$, is an ordered pair $\lno V\lno G\rno, E\lno G \rno \rno$, where $V\lno G\rno$ is called the set of vertices and $E\lno G\rno \subseteq V\lno G\rno \times V\lno G\rno$ is called the set of edges. In this article, all the graphs are undirected and simple, i.e., $(u,v)=(v,u)$ and $(u,u)\notin E(G)$ for all $u,v\in V(G)$. For $S\subseteq V\lno G \rno$, the \textit{induced subgraph} of $G$ on the set $S$, denoted $G\lsq S \rsq$, is a graph with vertex set $S$ and edge set containing all those edges of $G$ whose endpoints are in $S$.

In this article, the graphs of our interest are $\lno 2 \times n \rno$ and $\lno 3 \times n\rno$ grid graphs, where $n \geq 1$. For $n\geq 1$, the $\lno 2 \times n \rno$-grid graph, denoted $\twogg$ is defined as follows:
\begin{align*}
    V\lno \twogg\rno = &\lcu a_i, b_i\ \middle|\ 1\leq i \leq n\rcu;\\
    E\lno \twogg\rno = &\lcu \lno a_i, a_{i+1}\rno, \lno b_i, b_{i+1}\rno\in V\lno \twogg\rno\times V\lno \twogg\rno\ \middle|\ 1\leq i \leq n-1\rcu\\& \sqcup \lcu \lno a_j, b_{j}\rno\in V\lno \twogg\rno\times V\lno \twogg\rno\ \middle|\ 1\leq j \leq n\rcu.
\end{align*}

We will define the $\lno 3 \times n \rno$-grid graph in \Cref{section:total cut 3*k}. However, let us define the graph $\twoggp$, for $n\geq 1$, as $\twoggp = \twoggn \lsq V(\twoggn)\setminus \lcu b_{n+1} \rcu \rsq$ (see \Cref{fig: graph of G'(2×n)}). This graph will be useful in \Cref{section: Total k-cut of 2n gg} and \Cref{section: shellability of k-cut of 2n-gg}.

\begin{figure}[h!]
\vspace{-1em}
\centering
    \begin{minipage}{0.5\textwidth}
        \hspace{2.5em}\begin{tikzpicture}[line cap=round,line join=round,>=triangle 45,x=0.4cm,y=0.4cm]
        \clip(-1,-2) rectangle (14,4);
        \draw [line width=1pt] (0,0)-- (0,3);
        \draw [line width=1pt] (3,0)-- (3,3);
        \draw [line width=1pt] (6,0)-- (6,3);
        \draw [line width=1pt] (6,3)-- (7,3);
        \draw [line width=1pt] (6,0)-- (7,0);
        \draw [line width=1pt] (0,3)-- (6,3);
        \draw [line width=1pt] (0,0)-- (6,0);
        \draw [line width=1pt,dash pattern=on 4pt off 4pt] (7,3)-- (9,3);
        \draw [line width=1pt,dash pattern=on 4pt off 4pt] (7,0)-- (9,0);
        \draw [line width=1pt] (9,3)-- (10,3);
        \draw [line width=1pt] (9,0)-- (10,0);
        \draw [line width=1pt] (10,0)-- (10,3);
        \draw [line width=1pt] (13,0)-- (13,3);
        \draw (-0.5,4.2) node[anchor=north west] {$a_{_1}$};
        \draw (12.5,4.2) node[anchor=north west] {$a_{_{n}}$};
        \draw (9.5,4.2) node[anchor=north west] {$a_{_{n-1}}$};
        \draw (5.5,4.2) node[anchor=north west] {$a_{_3}$};
        \draw (2.5,4.2) node[anchor=north west] {$a_{_2}$};
        \draw (-0.5,0) node[anchor=north west] {$b_{_1}$};
        \draw (12.5,0) node[anchor=north west] {$b_{_{n}}$};
        \draw (9.5,0) node[anchor=north west] {$b_{_{n-1}}$};
        \draw (5.5,0) node[anchor=north west] {$b_{_3}$};
        \draw (2.5,0) node[anchor=north west] {$b_{_2}$};
        \draw [line width=1pt] (10,3)-- (13,3);
        \draw [line width=1pt] (10,0)-- (13,0);
        \begin{scriptsize}
        \draw [fill=wwwwww] (0,0) circle (2pt);
        \draw [fill=wwwwww] (0,3) circle (2pt);
        \draw [fill=wwwwww] (3,0) circle (2pt);
        \draw [fill=wwwwww] (3,3) circle (2pt);
        \draw [fill=wwwwww] (6,0) circle (2pt);
        \draw [fill=wwwwww] (6,3) circle (2pt);
        \draw [fill=wwwwww] (10,3) circle (2pt);
        \draw [fill=wwwwww] (10,0) circle (2pt);
        \draw [fill=wwwwww] (13,0) circle (2pt);
        \draw [fill=wwwwww] (13,3) circle (2pt);
        \end{scriptsize}
        \end{tikzpicture}
        \vspace{-1em}
        \caption{The graph $\twogg$.}
        \label{fig: graph of G(2×n)}
    \end{minipage}%
    ~\hfill
    \begin{minipage}{0.5\textwidth}
        \hspace{1em}\begin{tikzpicture}[line cap=round,line join=round,>=triangle 45,x=0.4cm,y=0.4cm]
        \clip(-1,-2) rectangle (17.5,4);
        \draw [line width=1pt] (0,0)-- (0,3);
        \draw [line width=1pt] (3,0)-- (3,3);
        \draw [line width=1pt] (6,0)-- (6,3);
        \draw [line width=1pt] (6,3)-- (7,3);
        \draw [line width=1pt] (6,0)-- (7,0);
        \draw [line width=1pt] (0,3)-- (6,3);
        \draw [line width=1pt] (0,0)-- (6,0);
        \draw [line width=1pt,dash pattern=on 4pt off 4pt] (7,3)-- (9,3);
        \draw [line width=1pt,dash pattern=on 4pt off 4pt] (7,0)-- (9,0);
        \draw [line width=1pt] (9,3)-- (10,3);
        \draw [line width=1pt] (9,0)-- (10,0);
        \draw [line width=1pt] (10,0)-- (10,3);
        \draw [line width=1pt] (13,0)-- (13,3);
        \draw [line width=1pt] (13,3)-- (16,3);
        \draw (-0.5,4.2) node[anchor=north west] {$a_{_1}$};
        \draw (12.5,4.2) node[anchor=north west] {$a_{_{n}}$};
        \draw (15.5,4.2) node[anchor=north west] {$a_{_{n+1}}$};
        \draw (9.5,4.2) node[anchor=north west] {$a_{_{n-1}}$};
        \draw (5.5,4.2) node[anchor=north west] {$a_{_3}$};
        \draw (2.5,4.2) node[anchor=north west] {$a_{_2}$};
        \draw (-0.5,0) node[anchor=north west] {$b_{_1}$};
        \draw (12.5,0) node[anchor=north west] {$b_{_{n}}$};
        \draw (9.5,0) node[anchor=north west] {$b_{_{n-1}}$};
        \draw (5.5,0) node[anchor=north west] {$b_{_3}$};
        \draw (2.5,0) node[anchor=north west] {$b_{_2}$};
        \draw [line width=1pt] (10,3)-- (13,3);
        \draw [line width=1pt] (10,0)-- (13,0);
        \begin{scriptsize}
        \draw [fill=wwwwww] (0,0) circle (2pt);
        \draw [fill=wwwwww] (0,3) circle (2pt);
        \draw [fill=wwwwww] (3,0) circle (2pt);
        \draw [fill=wwwwww] (3,3) circle (2pt);
        \draw [fill=wwwwww] (6,0) circle (2pt);
        \draw [fill=wwwwww] (6,3) circle (2pt);
        \draw [fill=wwwwww] (10,3) circle (2pt);
        \draw [fill=wwwwww] (10,0) circle (2pt);
        \draw [fill=wwwwww] (13,0) circle (2pt);
        \draw [fill=wwwwww] (13,3) circle (2pt);
        \draw [fill=wwwwww] (16,3) circle (2pt);    
        \end{scriptsize}
        \end{tikzpicture}
        \vspace{-1em}
        \caption{The graph $\twoggp$.}
        \label{fig: graph of G'(2×n)}
    \end{minipage}
\end{figure}

\subsection{Topological results}\label{subsection: 2.2 Topological results} 

A basic familiarity with concepts such as the geometric realization of simplicial complexes and homotopy types is assumed; for details, see Hatcher’s Algebraic Topology~\cite{Hatcherat} and Kozlov’s Combinatorial Algebraic Topology~\cite{kozlovCombAlgTopo}.

An (abstract) \textit{simplicial complex} $\mathcal{K}$ is a collection of finite sets such that if $\sigma \in \mathcal{K}$ and $\tau \subseteq \sigma$, then $\tau \in \mathcal{K}$. 
The elements of $\mathcal{K}$ are called \textit{faces} or \textit{simplices} of $\mathcal{K}$. If $\mathcal{K}$ has no face, then we say that $\mathcal{K}$ is the \textit{void complex}. If $\sigma$ is an element of $\mathcal{K}$ and $\sigma$ has $k+1$ elements, then $\sigma$ is said to be \textit{$k$-dimensional}. Thus, the empty simplex has dimension $\lno-1\rno$, and the \textit{vertices} of $\mathcal{K}$ are $0$-dimensional. The \emph{dimension} of a simplicial complex $\mathcal{K}$, denoted as dim$(\mathcal{K})$, is defined as the maximum dimension of its simplices. The simplices of $\mathcal{K}$ that are not properly contained in any other simplex of $\mathcal{K}$ are called \textit{facets}, and $\mathcal{K}$ is called \textit{pure} if all its facets have the same dimension.

A subset $ \mathcal{L} \subseteq \mathcal{K} $ is said to be a \textit{subcomplex} of a $\mathcal{K}$ if $\mathcal{L}$ itself is a simplicial complex. For any finite simplicial complex $\mathcal{K}$, the $d$-\textit{skeleton} of $\mathcal{K}$ is the collection of all simplices of $\mathcal{K}$ up to dimension $d$ and is denoted by $\Delta^{(d)}$. 
For any set $F$, we denote by $\langle F \rangle$ a simplicial complex with only one facet $F$. 

There are various important subcomplexes associated with a given simplicial complex. We discuss some of them, which we will be using.
\begin{definition}
    Let $\mathcal{K}$ be a simplicial complex, and $\sigma$ be a face of $\mathcal{K}$. Then,
    \begin{enumerate}[label = \arabic*.]
        \item The \textit{link} of $\sigma$ in $\mathcal{K}$, denoted by $\lk_{\mathcal{K}}\lno \sigma \rno$,
        $$\lk_{\mathcal{K}}\lno \sigma \rno = \lcu \tau \in \mathcal{K}\ \middle|\ \sigma \cap \tau = \emptyset,\ \sigma \cup \tau \in \mathcal{K} \rcu.$$
        
        \item The \textit{deletion} of $\sigma$ in $\mathcal{K}$, denoted by $\del_{\mathcal{K}}\lno \sigma \rno$, is
        $$\del_{\mathcal{K}}\lno \sigma \rno = \lcu \tau \in \mathcal{K}\ \middle|\ \sigma \nsubseteq \tau \rcu.$$
        
        \item The (closed) \textit{star} of $\sigma$ in $\mathcal{K}$, denoted by $\st_{\mathcal{K}}\lno \sigma \rno$, is
        $$\st_{\mathcal{K}}\lno \sigma \rno = \lcu \tau \in \mathcal{K}\ \middle|\ \sigma \cup \tau \in \mathcal{K} \rcu.$$
    \end{enumerate}
\end{definition}

A vertex $v$ of $\mathcal{K}$ is called a \emph{shedding vertex} if every facet of $\del_{\mathcal{K}}\lno v\rno$ is a facet of $\mathcal{K}$.
For simplicial complexes $\mathcal{K}_1$ and $\mathcal{K}_2$, the \textit{join} of $\mathcal{K}_1$ and $\mathcal{K}_2$, denoted as $\mathcal{K}_1 * \mathcal{K}_2$, is a simplicial complex whose simplices are the disjoint union of simplices of $\mathcal{K}_1$ and $\mathcal{K}_2$. Because of the join operation, one can construct other complexes from given simplicial complexes. We will be using the following particular constructions:

\begin{definition}
Let $x, y \notin V\lno \mathcal{K} \rno$.
    \begin{enumerate}
        \item The \textit{cone} over $\mathcal{K}$ with apex $x$, denoted $\cone_{x} \lno \mathcal{K} \rno$, is defined as
        $$\cone_{x} \lno \mathcal{K} \rno = \mathcal{K} * \lcu x \rcu.$$
    
        \item  The \textit{suspension} of $\mathcal{K}$, denoted $\Sigma \lno \mathcal{K} \rno$, is defined as
        $$\Sigma \lno \mathcal{K} \rno = \lno \mathcal{K} * \lcu x \rcu \rno \cup_{\mathcal{K}} \lno \mathcal{K} * \lcu y \rcu \rno .$$
    \end{enumerate}
\end{definition}

We have all the required definitions to discuss a few important results that will be used extensively in \Cref{section: Total k-cut of 2n gg} and \Cref{section:total cut 3*k}.

\begin{lemma}[\cite{Matsushitantimes4}, Lemma~2.1]\label{lemma: finding homotopy using link and deletion}
    Let $\mathcal{K}$ be a simplicial complex and $v$ be a vertex of $\mathcal{K}$. If $\lk_{\mathcal{K}}\lno v\rno$ is contractible in $\del_{\mathcal{K}}\lno v\rno$ then $$\mathcal{K} \simeq \del_{\mathcal{K}}\lno v\rno \vee \Sigma\lno \lk_{\mathcal{K}}\lno v\rno\rno.$$
\end{lemma}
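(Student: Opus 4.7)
The plan is to realize $\mathcal{K}$ as the mapping cone of the inclusion $\iota : \lk_{\mathcal{K}}\lno v\rno \hookrightarrow \del_{\mathcal{K}}\lno v\rno$ and then invoke the standard algebraic-topology fact that, along a cofibration, the mapping cone of a null-homotopic map $A \to X$ is homotopy equivalent to $X \vee \Sigma\lno A \rno$. To set this up, I would first record the decomposition $\mathcal{K} = \del_{\mathcal{K}}\lno v \rno \cup \st_{\mathcal{K}}\lno v \rno$ together with the identification $\del_{\mathcal{K}}\lno v \rno \cap \st_{\mathcal{K}}\lno v \rno = \lk_{\mathcal{K}}\lno v \rno$. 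Since $\st_{\mathcal{K}}\lno v \rno = \cone_v\lno \lk_{\mathcal{K}}\lno v \rno\rno$ is a cone with apex $v$, this exhibits $\mathcal{K}$ as the pushout of the subcomplex inclusion $\iota$ along the canonical inclusion of $\lk_{\mathcal{K}}\lno v \rno$ into a cone, that is, as the mapping cone of $\iota$.

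Next, I would use the hypothesis. By assumption, $\iota$ is null-homotopic in $\del_{\mathcal{K}}\lno v\rno$, so it is freely homotopic to a constant map $c_{x_0}$ at some point $x_0 \in \del_{\mathcal{K}}\lno v\rno$. Because the inclusion of a subcomplex is a cofibration, the homotopy type of a mapping cone depends only on the homotopy class of its attaching map, and hence $\mathcal{K}$ is homotopy equivalent to the space obtained by gluing $\cone\lno \lk_{\mathcal{K}}\lno v\rno\rno$ to $\del_{\mathcal{K}}\lno v\rno$ along $c_{x_0}$. Attaching a cone along a constant map collapses $\lk_{\mathcal{K}}\lno v\rno \subset \cone\lno \lk_{\mathcal{K}}\lno v\rno\rno$ to a single point of $\del_{\mathcal{K}}\lno v\rno$, producing precisely the wedge of $\del_{\mathcal{K}}\lno v\rno$ with $\cone\lno \lk_{\mathcal{K}}\lno v\rno\rno / \lk_{\mathcal{K}}\lno v\rno$, and this latter quotient is by definition $\Sigma\lno \lk_{\mathcal{K}}\lno v\rno\rno$. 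Putting the two homotopy equivalences together yields the claimed equivalence.

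The main technical obstacle is the homotopy-invariance step for mapping cones, namely the gluing lemma: if $A \hookrightarrow B$ is a cofibration and $f \simeq g : A \to X$, then $X \cup_f B \simeq X \cup_g B$. This is standard but not automatic; a careful write-up can either appeal directly to the gluing lemma (applied to the cofibration $\lk_{\mathcal{K}}\lno v\rno \hookrightarrow \cone\lno \lk_{\mathcal{K}}\lno v\rno\rno$) or, more concretely, construct an explicit homotopy equivalence from the given null-homotopy by interpolating through the mapping cylinder of $\iota$, pushing $\lk_{\mathcal{K}}\lno v\rno$ across the cylinder to the point $x_0$ and reading off the resulting one-point attachment.
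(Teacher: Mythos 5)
Your proof is correct, and it is essentially the standard argument for this statement. The paper itself cites this lemma from Goyal, Shukla, and Singh without reproving it, so there is no in-paper proof to compare against; but the cited proof rests on exactly the same decomposition $\mathcal{K} = \del_{\mathcal{K}}(v) \cup \st_{\mathcal{K}}(v)$ with $\del_{\mathcal{K}}(v) \cap \st_{\mathcal{K}}(v) = \lk_{\mathcal{K}}(v)$, identification of $\st_{\mathcal{K}}(v)$ as a cone, and the resulting mapping-cone/homotopy-pushout description of $\mathcal{K}$, followed by the gluing lemma to replace the null-homotopic attaching map by a constant. Your write-up is faithful to that route and correctly flags the one place where care is needed, namely homotopy invariance of the attaching map along the cofibration $\lk_{\mathcal{K}}(v) \hookrightarrow \cone\lno\lk_{\mathcal{K}}(v)\rno$.
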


Here, $\vee$ denotes the wedge of spaces, and the empty wedge would mean the space is contractible.

\begin{lemma}[\cite{BjornerTopoMethods}, Lemma~10.4(b)]\label{lemma: bjorner's lemma 10.4(b)}
    Let $\mathcal{K}$ be simplicial complex with subcomplexes $\mathcal{K}_i$, for $0\leq i\leq n$, such that, $\mathcal{K} = \bigcup\limits_{i=0}^n \mathcal{K}_i$. If $\mathcal{K}_i \cap \mathcal{K}_j \subseteq \mathcal{K}_0$, for all $1\leq i < j \leq n$ and $\mathcal{K}_i$ is contractible for all $0\leq i \leq n$, then $$\mathcal{K} = \bigvee\limits_{1\leq i \leq n}\Sigma\lno\mathcal{K}_0 \cap \mathcal{K}_i \rno.$$
\end{lemma}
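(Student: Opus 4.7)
The plan is to induct on $n$, exploiting the pushout structure coming from the hypothesis together with the standard topological fact that for a cofibration $A \hookrightarrow X$ (which every simplicial subcomplex inclusion is) with $X$ contractible, one has $X/A \simeq \Sigma A$. For the conclusion to be a wedge of suspensions as stated, $\mathcal{K}_0$ must be (implicitly) taken to be contractible as well, as in Bj\"orner's original formulation; this assumption is natural and is satisfied in the intended applications of the lemma in this paper.

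For the inductive step, set $\mathcal{K}' = \mathcal{K}_0 \cup \bigcup_{i=1}^{n-1}\mathcal{K}_i$. The key combinatorial observation is that the disjointness hypothesis forces $\mathcal{K}' \cap \mathcal{K}_n = \mathcal{K}_0 \cap \mathcal{K}_n$: indeed, any face of $\mathcal{K}_n$ that lies in some $\mathcal{K}_i$ with $1 \leq i \leq n-1$ must lie in $\mathcal{K}_i \cap \mathcal{K}_n \subseteq \mathcal{K}_0$. Hence $\mathcal{K}$ is the simplicial pushout of $\mathcal{K}'$ and $\mathcal{K}_n$ along $\mathcal{K}_0 \cap \mathcal{K}_n$. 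Using contractibility of $\mathcal{K}_n$ and the cofibration property of subcomplex inclusions, this pushout is homotopy equivalent to the mapping cone of the inclusion $\iota : \mathcal{K}_0 \cap \mathcal{K}_n \hookrightarrow \mathcal{K}'$. Since $\iota$ factors through the contractible subcomplex $\mathcal{K}_0 \subseteq \mathcal{K}'$, it is null-homotopic, and therefore its mapping cone splits up to homotopy as $\mathcal{K}' \vee \Sigma(\mathcal{K}_0 \cap \mathcal{K}_n)$. Invoking the inductive hypothesis $\mathcal{K}' \simeq \bigvee_{i=1}^{n-1}\Sigma(\mathcal{K}_0 \cap \mathcal{K}_i)$ then assembles the formula. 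The base case $n=1$ reduces to $\mathcal{K}_0 \cup \mathcal{K}_1 \simeq \Sigma(\mathcal{K}_0 \cap \mathcal{K}_1)$, which follows immediately from a Mayer--Vietoris argument using that both $\mathcal{K}_0$ and $\mathcal{K}_1$ are contractible.

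The hard part will be the wedge-splitting step: to upgrade the statement ``$\iota$ is null-homotopic'' to ``the mapping cone splits as a wedge sum'' requires a \emph{based} null-homotopy with basepoints that are compatible across the induction. I would handle this by fixing once and for all a basepoint in $\mathcal{K}_0$ (which is nonempty and contractible); this basepoint then lies in every intersection $\mathcal{K}_0 \cap \mathcal{K}_i$ and hence in every suspension factor, ensuring that the wedge assembles consistently at each stage. Small-case bookkeeping must also be done when some $\mathcal{K}_0 \cap \mathcal{K}_i$ is empty (interpret $\Sigma \emptyset$ as $S^0$) or when an intersection is already contractible (its suspension factor contributes trivially), but these are routine once the main cofibration/pushout framework is in place.
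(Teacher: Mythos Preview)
The paper does not supply its own proof of this lemma: it is quoted directly from Bj\"orner's survey \cite{BjornerTopoMethods} and used as a black box throughout, so there is nothing in the paper to compare your argument against.

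That said, your sketch is the standard route to this result and is correct in outline. The inductive pushout argument --- identifying $\mathcal{K}' \cap \mathcal{K}_n = \mathcal{K}_0 \cap \mathcal{K}_n$ from the pairwise-intersection hypothesis, then using contractibility of $\mathcal{K}_n$ to collapse the pushout to a mapping cone, and finally splitting the cone via the null-homotopy through $\mathcal{K}_0$ --- is exactly how one proves Bj\"orner's lemma. You are also right to flag that the statement as reproduced in the paper omits the hypothesis that $\mathcal{K}_0$ itself be contractible; this is present in Bj\"orner's original formulation and is genuinely needed (and indeed holds in every application the paper makes of the lemma, where $\mathcal{K}_0$ is always a cone or a join with a simplex). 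Your handling of the basepoint bookkeeping and of the degenerate cases ($\mathcal{K}_0 \cap \mathcal{K}_i$ empty or contractible) is appropriate.
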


Another useful combinatorial tool that reveals important topological information about a simplicial complex is shellability.

\begin{definition}[\cite{kozlovCombAlgTopo}, Defintion~12.1]\label{def:shellable}
    A simplicial complex $\mathcal{K}$ is said to be shellable if the facets of $\mathcal{K}$ can be put together in a linear order $F_1, F_2, \dots, F_t$ such that the subcomplex $\lno \bigcup_{s=1}^{j-1}\langle F_s \rangle \rno \cap \langle F_j \rangle$ is pure and of dimension $\lno dim \langle F_j \rangle -1 \rno$ for $2 \leq j \leq t$. Such an ordering on $\mathcal{K}$ is called a shelling order.
\end{definition}   
The definition of shellability mentioned above is commonly used. An alternative definition, provided by Bj{\"o}rner and Wachs (\cite[Lemma~2.3]{Bjornershellable}), states that a linear order $\lno F_1, F_2, \ldots, F_t \rno$ of the facets of $\mathcal{K}$ is a shelling order if and only if for all $ 1 \leq i < j \leq t $, there exists $k < j$  such that $F_i \cap F_j \subseteq F_k \cap F_j$ and $|F_k \cap F_j| = |F_j| - 1$.

Now, for our discussion, we will be using the following recursive definition of shellability given by Jonsson:

\begin{definition}[\cite{jonsson2008simplicial}, Definition~3.25]\label{def: shellability definition we used}
    The class of shellable simplicial complexes is recursively defined as follows:
    \begin{enumerate}
        \item Every simplex is shellable.
        \item If $\mathcal{K}$ is pure and contains a shedding vertex $v$ such that $\lk_{\mathcal{K}}\lno v\rno$ and $\del_{\mathcal{K}}\lno v\rno$ are shellable then $\mathcal{K}$ is shellable.
    \end{enumerate}
\end{definition}

\begin{lemma}[\cite{Bjornershellable}, Theorem~2.9]\label{shellability of skeleton}
	If $\mathcal{K}$ is shellable, then its $d$-skeleton $\mathcal{K}^{(d)}$ is also shellable, for all $d\geq 0$.
\end{lemma}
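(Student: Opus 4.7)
The plan is to proceed by induction on $|V(\mathcal{K})|$, working directly with Jonsson's recursive definition of shellability (\Cref{def: shellability definition we used}). For the base case, when $\mathcal{K}$ is a simplex, $\mathcal{K}^{(d)}$ is the $d$-skeleton of a simplex. This can be handled by a short secondary induction on $\dim \mathcal{K}$: any vertex $v$ of a simplex is automatically a shedding vertex of its $d$-skeleton, while $\lk_{\mathcal{K}}(v)$ and $\del_{\mathcal{K}}(v)$ are themselves simplices of strictly smaller dimension, whose appropriate skeletons are shellable by this secondary induction.

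For the inductive step, suppose $\mathcal{K}$ is shellable via case (2) of \Cref{def: shellability definition we used}: it is pure of some dimension $n$ and admits a shedding vertex $v$ with $\lk_{\mathcal{K}}(v)$ and $\del_{\mathcal{K}}(v)$ both shellable. If $d \geq n$ then $\mathcal{K}^{(d)} = \mathcal{K}$ and there is nothing to prove, so assume $d < n$. Unwinding the definitions, one obtains the identities
\[
\lk_{\mathcal{K}^{(d)}}(v) = \bigl(\lk_{\mathcal{K}}(v)\bigr)^{(d-1)} \quad \text{and} \quad \del_{\mathcal{K}^{(d)}}(v) = \bigl(\del_{\mathcal{K}}(v)\bigr)^{(d)}.
\]
Since $\lk_{\mathcal{K}}(v)$ and $\del_{\mathcal{K}}(v)$ each have strictly fewer vertices than $\mathcal{K}$, the inductive hypothesis applies and both right-hand sides are shellable. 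Purity of $\mathcal{K}^{(d)}$ in dimension $d$ is straightforward from purity of $\mathcal{K}$ in dimension $n \geq d$: every face of dimension less than $d$ sits inside some $n$-face of $\mathcal{K}$, which certainly contains $d$-dimensional faces extending it.

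The main obstacle, and the only step that requires a genuine argument, is checking that $v$ remains a shedding vertex of $\mathcal{K}^{(d)}$, i.e., that every facet of $\del_{\mathcal{K}^{(d)}}(v)$ has dimension exactly $d$. Suppose for contradiction that $F$ is a facet of $\del_{\mathcal{K}^{(d)}}(v)$ with $\dim F < d$. Since $\dim F < d < n = \dim \mathcal{K}$ and $\mathcal{K}$ is pure, $F$ is not a facet of $\mathcal{K}$; combined with $v \notin F$, the shedding property of $v$ in $\mathcal{K}$ (in contrapositive form) forces $F$ to fail to be a facet of $\del_{\mathcal{K}}(v)$, so there exists $G \in \mathcal{K}$ with $F \subsetneq G$ and $v \notin G$. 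Choosing any $w \in G \setminus F$ produces $F' := F \cup \{w\} \in \mathcal{K}$ with $v \notin F'$ and $\dim F' = \dim F + 1 \leq d$, hence $F' \in \del_{\mathcal{K}^{(d)}}(v)$, contradicting maximality of $F$. Therefore $\dim F = d$ and $F$ is a facet of $\mathcal{K}^{(d)}$, completing the verification of the shedding property and closing the induction.
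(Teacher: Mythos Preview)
The paper itself gives no proof, citing Bj\"orner--Wachs \cite{Bjornershellable}, who work directly with a shelling order $F_1,\dots,F_t$ of $\mathcal{K}$ in the sense of \Cref{def:shellable}: for each $F_i$ in turn they list those $d$-faces of $F_i$ containing the restriction face $R(F_i)$, and verify that the concatenated list shells $\mathcal{K}^{(d)}$. Your route is genuinely different, staying entirely within the recursive framework of \Cref{def: shellability definition we used}; the identities $\lk_{\mathcal{K}^{(d)}}(v)=(\lk_{\mathcal{K}}(v))^{(d-1)}$ and $\del_{\mathcal{K}^{(d)}}(v)=(\del_{\mathcal{K}}(v))^{(d)}$ are correct, and the contrapositive argument that $v$ remains a shedding vertex of $\mathcal{K}^{(d)}$ is exactly the right one. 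Structurally your argument is really a structural induction on the recursive definition rather than an induction on $|V(\mathcal{K})|$ (the simplex case is one clause of the definition, not a size-based base case), but this is cosmetic. One substantive caveat: \Cref{def: shellability definition we used} as written is the standard definition of \emph{vertex-decomposability}, a strict subclass of shellability in the sense of \Cref{def:shellable}, so what your argument actually establishes is that skeletons of vertex-decomposable complexes are vertex-decomposable. That is precisely what the paper needs downstream --- the proof of \Cref{lemma: Shellability of 2n'gg} only ever invokes this lemma through the recursive definition --- but it does not recover the full Bj\"orner--Wachs statement for an arbitrary shellable $\mathcal{K}$.
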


Let us now mention the following result, which states that the join of two shellable simplicial complexes is shellable.

\begin{lemma}[\cite{Wachs97b}, Remark 10.22]\label{join is shellable}
    The join of two simplicial complexes is shellable if and only if each is shellable.
\end{lemma}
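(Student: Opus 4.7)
The plan is to prove the two implications separately using the Bj\"orner--Wachs reformulation of shellability recalled in the excerpt: an ordering $F_1,\ldots,F_t$ of the facets is a shelling if and only if for every pair $i<j$ there exists $k<j$ with $F_i\cap F_j\subseteq F_k\cap F_j$ and $|F_k\cap F_j|=|F_j|-1$. Throughout, I will use that in the join $\mathcal{K}_1*\mathcal{K}_2$ the vertex sets $V(\mathcal{K}_1)$ and $V(\mathcal{K}_2)$ are disjoint, so that facets of the join are exactly the sets $F\cup G$ with $F$ a facet of $\mathcal{K}_1$ and $G$ a facet of $\mathcal{K}_2$, and the distributive identity $(A\cup B)\cap(C\cup D)=(A\cap C)\cup(B\cap D)$ holds whenever $A,C\subseteq V(\mathcal{K}_1)$ and $B,D\subseteq V(\mathcal{K}_2)$.

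For the ``if'' direction, I would start from shelling orders $F_1,\ldots,F_s$ and $G_1,\ldots,G_t$ of $\mathcal{K}_1$ and $\mathcal{K}_2$ and order the facets of $\mathcal{K}_1*\mathcal{K}_2$ lexicographically by $(i,j)$. To verify the Bj\"orner--Wachs condition for a pair $F_i\cup G_j<F_{i'}\cup G_{j'}$, I split on whether $i<i'$ or $i=i'$ with $j<j'$. In the first case I apply the criterion inside $\mathcal{K}_1$ to obtain $i''<i'$ with $F_i\cap F_{i'}\subseteq F_{i''}\cap F_{i'}$ and $|F_{i''}\cap F_{i'}|=|F_{i'}|-1$, and then check via the distributive identity above that $F_{i''}\cup G_{j'}$ (which is earlier in the lex order) witnesses the condition, the required cardinality drop being exactly one since $G_{j'}$ is preserved. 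The second case is handled symmetrically using the shelling of $\mathcal{K}_2$.

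For the ``only if'' direction, I first note that a shellable complex is pure by \Cref{def:shellable}, and purity of $\mathcal{K}_1*\mathcal{K}_2$ forces purity of each factor. Now fix a facet $G_0$ of $\mathcal{K}_2$ and consider the subsequence of the given shelling of $\mathcal{K}_1*\mathcal{K}_2$ consisting of facets of the form $F\cup G_0$; the claim is that this induces a shelling of $\mathcal{K}_1$ after identifying $F\cup G_0$ with $F$. Given $F\cup G_0<F'\cup G_0$ in this subsequence, the Bj\"orner--Wachs property of the ambient shelling produces an earlier facet $H=F''\cup G''$ with the required containment and cardinality drop. A short case analysis on $(|F''\cap F'|,|G''\cap G_0|)$ shows that either $G''=G_0$ (so $H$ lies in our subsequence and transfers the condition to $\mathcal{K}_1$), or $F''=F'$ and $|G''\cap G_0|=|G_0|-1$; the latter is ruled out because the containment $(F\cap F')\cup G_0\subseteq F'\cup(G''\cap G_0)$, combined with disjointness of $V(\mathcal{K}_1)$ and $V(\mathcal{K}_2)$, would force $G_0\subseteq G''\cap G_0$, contradicting $|G''\cap G_0|<|G_0|$. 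Shellability of $\mathcal{K}_2$ follows by interchanging the roles of the two factors.

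The main obstacle is the ``only if'' direction: one must show that restricting a shelling of the join to the facets through a fixed $G_0$ really yields a shelling of $\mathcal{K}_1$, and the nontrivial point is ruling out Bj\"orner--Wachs witnesses $H$ that do not contain $G_0$. The disjointness of the two vertex sets in the join is precisely what drives the contradiction, so I would emphasize that step in the write-up.
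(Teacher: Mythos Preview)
The paper does not prove this lemma; it merely cites it from Bj\"orner--Wachs \cite{Wachs97b}. Your argument is therefore not being compared against anything in the paper, and on its own merits it is essentially correct, with one small wrinkle worth flagging.

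Your claim that ``a shellable complex is pure by \Cref{def:shellable}'' is false: the definition quoted there is precisely the Bj\"orner--Wachs \emph{nonpure} shellability, which does not force purity. Fortunately your proof of the ``only if'' direction never actually uses purity. In fact the case analysis you sketch collapses immediately: from the containment $(F\cap F')\cup G_0\subseteq (F''\cap F')\cup(G''\cap G_0)$ and the disjointness of $V(\mathcal{K}_1)$ and $V(\mathcal{K}_2)$ you get $G_0\subseteq G''\cap G_0\subseteq G_0$, hence $G_0\subseteq G''$; since both $G_0$ and $G''$ are facets of $\mathcal{K}_2$, this forces $G''=G_0$ outright. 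Thus the witness $H=F''\cup G''$ automatically lies in your subsequence, and the cardinality condition becomes $|F''\cap F'|=|F'|-1$, exactly what you need. So the ``bad case'' you spend time ruling out never arises, and you should simply drop the purity sentence and streamline the case split accordingly. The ``if'' direction is clean as written.
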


\subsection{Discrete Morse Theory}
Discrete Morse theory is one of the most important tools in combinatorial topology, formulated by Robin Forman (see \cite{forman1998morse, forman2002user}) from the classical Morse theory to compute the homotopy type of various polyhedral complexes, particularly simplicial complexes. In this section, we will be following the notations and definitions mainly from the book of Kozlov \cite{kozlovCombAlgTopo}.

\begin{definition} (\cite[{Definition~11.1}]{kozlovCombAlgTopo}) Let $(\mathcal{P}, <)$ be a partially ordered set (poset).
    \begin{enumerate}
        \item\label{defn: Partial matching} A \emph{partial matching} in $\mathcal{P}$ is a subset $\mathcal{M} \subseteq \mathcal{P}\times\mathcal{P}$, such that
        \begin{itemize}
            \item $\left(a,b\right) \in \mathcal{M}$ implies $a \prec b$;
            \item each $c \in \mathcal{P}$ belongs to at most one element (pair) of $\mathcal{M}$.
        \end{itemize}
        Here, $a \prec b$ means there exists no $\delta \in \mathcal{P}$ such that $a < \delta < b$. Moreover, note that $\mathcal{M}$ is a partial matching on poset $\mathcal{P}$ if and only if there exists $\mathcal{T} \subset \mathcal{P}$ and an injective map $\psi: \mathcal{T} \to \mathcal{P} \setminus \mathcal{T}$ such that $t \prec \psi\left(t\right)$ for all $t \in \mathcal{T}$.

        \item\label{Acyclic matching} A partial matching on $\mathcal{P}$ is said to be \emph{acyclic} if there does not exist a cycle, $$a_1 \prec \psi\left(a_1\right) \succ a_2 \prec \psi\left(a_2\right) \succ \dots \succ a_k \prec \psi\left(a_k\right) \succ a_1,$$
        where $k\geq2$ and all $a_i \in \mathcal{P}$ being distinct.
    \end{enumerate}

    For an acyclic matching $\mathcal{M}$ on poset $\mathcal{P}$, we define \textit{critical elements} to be those elements of $\mathcal{P}$ which remain unmatched, i.e., the elements of $\mathcal{P}\setminus (\mathcal{T}\cup \psi(\mathcal{T}))$.
\end{definition}

 We now state the main theorem of discrete Morse theory.

\begin{theorem}\label{theorem 1}(\cite[{Theorem~11.13}]{kozlovCombAlgTopo})
    Let $\mathcal{K}$ be a simplicial complex, and let $\mathcal{M}$ be an acyclic matching on the face poset of $\mathcal{K}$. If $c_i$ denotes the number of critical $i$-dimensional cells of $\mathcal{K}$ then the space $\mathcal{K}$ is homotopy equivalent to a cell complex $\mathcal{K}_c$ with $c_i$ cells of dimension $i$ for each $i \geq 0$, plus a single 0-dimensional cell in the case where the empty set is paired in the matching.
\end{theorem}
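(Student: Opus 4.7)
My plan is to prove this via the standard strategy of converting the acyclic matching into a sequence of elementary collapses, so that after all matched pairs have been collapsed the only remaining cells are the critical ones. The key bridge is the fact that if $(a,b)\in\mathcal{M}$ with $a\prec b$, then $a$ is a codimension-one face of $b$, so the pair $(a,b)$ is a candidate for an elementary collapse; what must be ensured is that these collapses can be performed in some order in which each one is legal, i.e.\ each $a$ is a free face of $b$ in the subcomplex present at the moment of collapse.

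First, I would formalize the passage from acyclicity to a linear ordering. Define the modified Hasse diagram $H_{\mathcal{M}}(\mathcal{P})$ by taking the usual covering relations of the face poset (directed downward, say) and reversing precisely the edges coming from pairs in $\mathcal{M}$. A straightforward unwinding of the definition of acyclicity in \ref{Acyclic matching} shows that a directed cycle in $H_{\mathcal{M}}(\mathcal{P})$ would produce exactly an alternating cycle forbidden by the acyclicity hypothesis. Hence $H_{\mathcal{M}}(\mathcal{P})$ is a DAG and admits a topological linear extension $\sigma_1,\sigma_2,\dots,\sigma_N$ of the cells of $\mathcal{K}$ (together with $\emptyset$ if we include it).

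Second, I would construct the cell complex $\mathcal{K}_c$ inductively along this linear order. Let $\mathcal{K}_i$ be the subcomplex generated by $\{\sigma_1,\dots,\sigma_i\}$. At each step, one of three things happens: (i) $\sigma_i$ is critical, in which case its boundary $\partial\sigma_i$ already lies in $\mathcal{K}_{i-1}$ by the topological-sort property, so we attach $\sigma_i$ as a genuine new cell of dimension $\dim\sigma_i$; (ii) $\sigma_i$ is the lower element of a matched pair $(\sigma_i,\tau)=(a,b)$, in which case adding the next cell $\tau$ (which by the topological sort appears immediately next in the order for this matched pair, after any other faces of $\tau$ have been added) presents $\sigma_i$ as a free face of $\tau$ in the subcomplex just formed, and I can elementarily collapse $(\sigma_i,\tau)$ without changing homotopy type; (iii) $\sigma_i$ is the upper element of a pair, already handled in case (ii). The crucial verification is that in case (ii), every proper coface-chain from $\sigma_i$ inside the current subcomplex ends at $\tau$, which is exactly what the reversed orientation of the matched edge in $H_{\mathcal{M}}(\mathcal{P})$ guarantees.

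The main obstacle is exactly the verification in case (ii) that $\sigma_i$ is free in the evolving subcomplex at the moment $\tau$ is added: one must argue that every other coface of $\sigma_i$ in $\mathcal{K}$ either comes strictly later in the linear order (hence is absent) or would violate acyclicity. This is a diagram chase that uses the reversed-edge convention and the fact that $\mathcal{M}$ matches each element to at most one partner. Once this is established, the sequence of elementary collapses produces a CW complex with exactly one cell per critical element, and the parenthetical statement about $\emptyset$ being matched is handled by observing that the matched partner of $\emptyset$ is a vertex $v$; after collapsing $(\emptyset,v)$ one is left with an additional single $0$-cell (corresponding to collapsing the cone point), accounting for the extra $0$-dimensional cell in $\mathcal{K}_c$.
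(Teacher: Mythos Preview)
The paper does not supply a proof of this statement: it is quoted verbatim from Kozlov's book and used as a black box in the Appendix. There is therefore nothing in the paper to compare your argument against.

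That said, your sketch is the standard route and is broadly correct, but two points deserve tightening. First, with your convention (Hasse edges directed downward, matched edges reversed upward) a topological sort lists cofaces before faces along unmatched edges; that is the order suited to \emph{collapsing} $\mathcal{K}$ down, not to \emph{building} the filtration $\mathcal{K}_1\subseteq\mathcal{K}_2\subseteq\cdots$ that your inductive step describes. You should either reverse the convention or recast the induction as a sequence of collapses starting from $\mathcal{K}$. Second, the claim that for a matched pair $(a,b)$ the cell $b$ appears ``immediately next'' after $a$ in the linear order is not automatic from an arbitrary topological sort; one must either choose the extension so that matched pairs are consecutive (which acyclicity permits, but requires an argument), or drop the immediacy claim and instead verify directly, as you begin to do in your final paragraph, that at the moment $b$ is processed the only coface of $a$ present is $b$ itself. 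The latter verification is the actual content of the proof, and it should replace rather than accompany the ``immediately next'' assertion. Your treatment of the $\emptyset$ case is also slightly off: when $\emptyset$ is matched to a vertex $v$, the pair $(\emptyset,v)$ is collapsed away, and the extra $0$-cell in $\mathcal{K}_c$ is needed simply because a nonempty CW complex must have a $0$-cell even if no critical $0$-cells remain; it is not a remnant of the collapse itself.
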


Based on the above theorem, the following information can be obtained:

\begin{corollary}\label{Corollary 1}(\cite[Corollary~2.5]{deshpande2020higher})
    For an acyclic matching $\mathcal{M}$, if all the critical cells in $\mathcal{M}$ are of dimension $d$, then $\mathcal{K}$ is homotopy equivalent to a wedge of $d$-dimensional spheres.
\end{corollary}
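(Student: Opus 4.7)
The plan is to deduce this as a short consequence of \Cref{theorem 1}, which already does most of the work. First I would observe that since every critical cell has dimension $d$, the empty face (of dimension $-1$) cannot be critical and is therefore paired in $\mathcal{M}$. Applying \Cref{theorem 1} then yields a CW complex $\mathcal{K}_c$ homotopy equivalent to $\mathcal{K}$ whose only cells are the extra $0$-cell guaranteed by the theorem together with the $c_d$ critical cells, all of dimension $d$. In particular, $\mathcal{K}_c$ has no cells in the dimension range $1, 2, \dots, d-1$ (assuming $d\geq 1$; the case $d=0$ can be handled separately, since then $\mathcal{K}_c$ is a disjoint union of $c_0+1$ points, which is homotopy equivalent to $\bigvee_{c_0}\bbS^0$).

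Next I would exploit the fact that the $(d-1)$-skeleton of $\mathcal{K}_c$ consists of the single $0$-cell. Each of the $c_d$ top-dimensional cells must therefore be attached along a map $\bbS^{d-1}\to \{\mathrm{pt}\}$, and the only such map is the constant map. Attaching a $d$-disk to a point via the constant map on its boundary produces the quotient $D^d/\partial D^d \cong \bbS^d$, and performing this attachment for all $c_d$ cells simultaneously (with the unique $0$-cell as common basepoint) yields precisely the wedge $\bigvee_{c_d}\bbS^d$. Combined with the homotopy equivalence $\mathcal{K}\simeq \mathcal{K}_c$ from \Cref{theorem 1}, this gives $\mathcal{K}\simeq \bigvee_{c_d}\bbS^d$.

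The main obstacle, such as it is, is really only bookkeeping: one must justify why the empty face is always matched (which follows from the dimension hypothesis together with $d\ge 0$) and handle the degenerate case $d=0$ cleanly. The substantive theoretical content is entirely absorbed into \Cref{theorem 1}; what remains is the routine CW-theoretic observation that a CW complex with exactly one $0$-cell and all remaining cells concentrated in a single higher dimension $d$ is homotopy equivalent to a wedge of $d$-spheres.
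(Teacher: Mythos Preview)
Your argument is correct and is the standard way to derive this corollary from \Cref{theorem 1}. Note, however, that the paper does not actually supply its own proof of this statement: it is quoted directly from \cite[Corollary~2.5]{deshpande2020higher} and used as a black box, so there is no ``paper's proof'' to compare against. Your write-up fills in exactly the routine CW-theoretic reasoning one would expect behind the citation.
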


An easier way to pair elements of the face poset is to perform \textit{element matching} on them. A proper definition of element matching is as follows:

\begin{definition}[\cite{jonsson2008simplicial}] \label{def: Element matching}
    Let $v$ be a vertex and $\Delta$ be a simplicial complex. The \emph{element matching using $v$ on $\Delta$} is defined as the following set of pairs, \[M_{v} = \left\{\left(\sigma,\sigma \cup \left\{v\right\}\right)\ |\ v \notin \sigma,\ \sigma \cup \left\{v\right\} \in \Delta\right\}.\]
\end{definition}

The advantage of defining a sequence of element matching is that the union of a sequence of element matching is an acyclic matching, as stated in the following theorem.

\begin{prop}\label{theorem 2}(\cite[Proposition~3.1]{deshpande2020higher}, \cite[Lemma~4.1]{jonsson2008simplicial})
    Let $\Delta$ be a simplicial complex and $\{v_{1},v_{2},\dots, $ $v_{t}\}$ be a subset of the vertex set of $\Delta$. Let $\Delta_0 = \Delta$, and for all $i \in \left\{1,\ 2,\dots,\ t\right\}$, define
    \begin{align*}
        M_{v_{i}} &= \left\{\left(\sigma,\ \sigma \cup \left\{v_{i}\right\}\right)\ \middle|\ v_{i} \notin \sigma, \text{and}\ \sigma,\ \sigma \cup \left\{v_{_i}\right\} \in \Delta_{i-1}\right\},\\
        N_{v_{i}} &= \left\{\sigma \in \Delta_{i-1}\ \middle|\ \sigma \in \eta\ \text{for some}\ \eta \in M_{v_{i}}\right\},\ \text{and}\\
        \Delta_i &= \Delta_{i-1} \setminus N_{v_{i}}.
    \end{align*}
    Then, $\bigsqcup_{i=1}^t{M_{v_{_i}}}$ is an acyclic matching on $\Delta$.
\end{prop}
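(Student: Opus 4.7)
The plan is to verify two things about $M = \bigsqcup_{i=1}^t M_{v_i}$: that it is a partial matching, and that it is acyclic. The partial matching property is immediate from the construction, since the recursion $\Delta_i = \Delta_{i-1} \setminus N_{v_i}$ guarantees $N_{v_i} \cap N_{v_j} = \emptyset$ for $i \neq j$, and each individual $M_{v_i}$ is itself a partial matching on $\Delta_{i-1}$ (an element cannot simultaneously contain and not contain $v_i$).

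For acyclicity, my plan is to use Jonsson's Patchwork (or Cluster) Lemma: if $\varphi \colon P \to Q$ is an order-preserving map of posets and each fiber $\varphi^{-1}(q)$ carries an acyclic matching $M_q$, then $\bigsqcup_q M_q$ is acyclic on $P$. The lemma follows from a quick chase of inequalities: in any would-be cycle $a_1 \prec b_1 \succ a_2 \prec \cdots \succ b_k \succ a_1$, the paired values $\varphi(a_j) = \varphi(b_j)$ are constant within pairs, while order-preservation gives $\varphi(b_j) \geq \varphi(a_{j+1})$ at every downward step, so the cyclic chain $\varphi(a_1) \geq \varphi(a_2) \geq \cdots \geq \varphi(a_1)$ collapses to equality, placing the cycle in a single fiber and contradicting the fiberwise acyclicity.

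I would then proceed by induction on $t$. The base case $t = 1$ is the acyclicity of a single element matching $M_{v_1}$: in any hypothetical cycle the downward step $b_j \succ a_{j+1}$ either deletes $v_1$ (forcing $a_{j+1} = a_j$, violating distinctness) or deletes another vertex (forcing $v_1 \in a_{j+1}$, contradicting the requirement $v_1 \notin a_{j+1}$ coming from $(a_{j+1}, b_{j+1}) \in M_{v_1}$). For the inductive step, I would apply the Patchwork Lemma with $\varphi \colon \Delta \to \{0, 1\}$ defined by $\varphi(\sigma) = 0$ for $\sigma \in N_{v_1}$ and $\varphi(\sigma) = 1$ otherwise; then $\varphi^{-1}(0) = N_{v_1}$ carries $M_{v_1}$ (acyclic by the base case) and $\varphi^{-1}(1) = \Delta_1$ carries $\bigsqcup_{i=2}^t M_{v_i}$ (acyclic by the inductive hypothesis applied to the reduced data). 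The main technical obstacle is verifying that $\varphi$ is order-preserving, which rests on a case analysis for each cover $\sigma \prec \tau = \sigma \cup \{v\}$: if $v = v_1$ both ends lie in $N_{v_1}$ directly; if $v \neq v_1$ and $v_1 \in \sigma$, the simplicial property of $\Delta$ forces $\sigma \setminus \{v_1\}, \tau \setminus \{v_1\} \in \Delta$, placing both $\sigma$ and $\tau$ in $N_{v_1}$ via their $M_{v_1}$-partners; and if $v_1 \notin \sigma$, the simplicial property yields the key implication $\tau \cup \{v_1\} \in \Delta \Rightarrow \sigma \cup \{v_1\} \in \Delta$, which is precisely $\varphi(\tau) = 0 \Rightarrow \varphi(\sigma) = 0$. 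Throughout this verification the simplicial structure of $\Delta$ is essential, and once order-preservation is established the Patchwork Lemma closes the induction.
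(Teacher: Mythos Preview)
The paper does not prove this proposition; it merely cites it from Deshpande--Singh and Jonsson, so there is no in-paper argument to compare against.

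Your outline has the right architecture, but the inductive step contains a genuine gap. You invoke the inductive hypothesis on $\Delta_1$ with the vertex list $v_2,\ldots,v_t$, yet the hypothesis is stated for a \emph{simplicial complex}, and $\Delta_1=\Delta\setminus N_{v_1}$ is only an upward-closed subposet of $\Delta$, not a subcomplex. Nor can one simply rerun your order-preserving verification on $\Delta_1$: take $\Delta$ with facets $\{1,2,3\}$ and $\{3,4\}$, and set $v_1=4$, $v_2=1$. Then $\Delta_1=\{\{1\},\{2\},\{1,2\},\{1,3\},\{2,3\},\{1,2,3\}\}$ and $N_{v_2}=\{\{2\},\{1,2\},\{2,3\},\{1,2,3\}\}$; here $\{1\}\subset\{1,2\}$ with $\{1,2\}\in N_{v_2}$ but $\{1\}\notin N_{v_2}$, so the analogue of your map $\varphi$ on $\Delta_1$ is \emph{not} order-preserving, and the Patchwork Lemma does not apply at the second layer. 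The usual fixes are either to strengthen the induction to arbitrary families of finite sets (so it legitimately covers each $\Delta_i$), or to abandon the induction for a direct cycle-chase: writing $(a_j,b_j)\in M_{v_{f(j)}}$ and $m=\min_j f(j)$, the element $v_m$ must be deleted at some downward step $b_j\succ a_{j+1}$ with $f(j)>m$; then $b_j\in\Delta_m$ and $v_m\in b_j$ force $a_{j+1}=b_j\setminus\{v_m\}\notin\Delta_{m-1}$, contradicting $a_{j+1}\in\Delta_{f(j+1)-1}\subseteq\Delta_{m-1}$.
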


\subsection{Total cut complexes and cut complexes}
We now define the total cut and cut complexes associated with graphs.

\begin{definition}[\cite{bayer2024total}, Definition~2.3]\label{def:totalcutcomplex}
Let $G$ be a graph and $k \ge 1$. The \emph{total $k$-cut complex} of $G$, denoted by $\totalkcut\lno G\rno$, is the simplicial complex whose facets are the complements of independent sets of size $k$. Equivalently, $\sigma \in \totalkcut\lno G\rno$ if and only if $V(G) \setminus \sigma$ contains an independent set of size $k$.
\end{definition}

\begin{example}
    Let $G_1$ be the graph shown in \Cref{fig: the graph G1}.
    The collection of independent set of size 3 in $G_1$, say $I_3 \lno G_1 \rno$ is $$I_3 \lno G_1 \rno = \lcu \lno 135 \rno, \lno 136 \rno, \lno 145 \rno, \lno 235 \rno \rcu.$$
    
    Here, $\lno abc \rno = \lcu a, b,c\rcu$, for some $a,b,c \in V_1$. Consequently, the total-3 cut complex of $G_1$ (see \Cref{fig: total 3-cut of G1}) has complements of the elements of $I_3 \lno G_1 \rno$ as its facets, i.e., $$\totalthreecut \lno G_1 \rno = \left\langle \lno 146 \rno, \lno 236 \rno, \lno 245 \rno, \lno 246 \rno \right\rangle.$$
\end{example}

As another example, we consider the total $3$-cut complex of a grid graph. 
\begin{figure}[H]
    \hspace{-2em}
    \begin{subfigure}[b]{0.5\textwidth}
    \centering
        \begin{tikzpicture}[line cap=round,line join=round,>=triangle 45,x=0.35cm,y=0.35cm, roundnode/.style={scale = 0.5, circle, draw=cyan!60, fill=cyan!5, thick, minimum size=2.5em}]
        \clip(1,1) rectangle (11,11);
        \draw [line width=1pt] (4,4)-- (8,4);
        \draw [line width=1pt] (8,4)-- (6,7.464101615137754);
        \draw [line width=1pt] (6,7.464101615137754)-- (4,4);
        \draw [line width=1pt] (4,4)-- (2,2);
        \draw [line width=1pt] (8,4)-- (10,2);
        \draw [line width=1pt] (6,7.464101615137754)-- (6,10.29);
        
        \draw (6,10.29) node [roundnode] {\huge \textbf{1}};
        \draw (6,7.464101615137754) node [roundnode] {\huge \textbf{2}};
        \draw (2,2) node [roundnode] {\huge \textbf{3}};
        \draw (4,4) node [roundnode] {\huge \textbf{4}};
        \draw (10,2) node [roundnode] {\huge \textbf{5}};
        \draw (8,4) node [roundnode] {\huge \textbf{6}};
    \end{tikzpicture}
    \caption{The graph $G_1$.}
    \label{fig: the graph G1}
    \end{subfigure}%
    \hspace{-3em}
    \begin{subfigure}[b]{0.5\textwidth}
    \centering
    \hspace{-1em}
        \begin{tikzpicture}[line cap=round,line join=round,>=triangle 45,x=0.55cm,y=0.55cm, roundnode/.style={scale = 0.5, circle, draw=cyan!60, fill=cyan!5, thick, minimum size=2.5em}]
        \clip(0,0) rectangle (5.5,6.5);
        \fill[black!15] (1,1) -- (1,4) -- (3,2.5) -- cycle;
        \fill[black!15] (1,4) -- (3,2.5) -- (5,4) -- cycle;
        \fill[black!15] (5,4) -- (3,2.5) -- (5,1) -- cycle;
        \fill[black!15] (1,4) -- (5,4) -- (3,6) -- cycle;
        
        \draw [line width=1pt] (1,1)-- (1,4);
        \draw [line width=1pt] (1,4)-- (3,2.5);
        \draw [line width=1pt] (3,2.5)-- (1,1);
        \draw [line width=1pt] (3,2.5)-- (5,4);
        \draw [line width=1pt] (5,4)-- (5,1);
        \draw [line width=1pt] (5,1)-- (3,2.5);
        \draw [line width=1pt] (1,4)-- (5,4);
        \draw [line width=1pt] (5,4)-- (3,6);
        \draw [line width=1pt] (3,6)-- (1,4);

        \draw (1,1) node [roundnode] {\huge \textbf{1}};
        \draw (5,4) node [roundnode] {\huge \textbf{2}};
        \draw (3,6) node [roundnode] {\huge \textbf{3}};
        \draw (3,2.5) node [roundnode] {\huge \textbf{4}};
        \draw (5,1) node [roundnode] {\huge \textbf{5}};
        \draw (1,4) node [roundnode] {\huge \textbf{6}};
    \end{tikzpicture}
    \caption{$\totalthreecut\lno G_1 \rno $.}
    \label{fig: total 3-cut of G1}
    \end{subfigure}
    \caption{Total $3-$cut complex of the graph $G_1$.}
\end{figure}

    

\begin{example}
    For $\mathcal{G}_{2\times4}$ (see \Cref{fig: graph of G(2×n)}), the collection of independent sets of size $3$ is
    $$I_3 \lno \mathcal{G}_{2\times4} \rno = \lcu \begin{split}
        &\lno a_1 a_3 b_2 \rno, \lno a_1 a_3 b_4 \rno, \lno a_1 a_4 b_2 \rno, \lno a_1 a_4 b_3 \rno, \lno a_1 b_2 b_4 \rno, \lno a_1 a_4 b_1 \rno,\\
        &\lno a_2 a_4 b_3 \rno, \lno a_2 b_1 b_3 \rno, \lno a_2 b_1 b_4 \rno, \lno a_3 b_1 b_4 \rno, \lno a_3 b_2 b_4 \rno, \lno a_4 b_1 b_3 \rno
    \end{split} \rcu.
    $$
    
    Then the total-3 cut complex of $\mathcal{G}_{2\times4}$ has facets which are complements of the elements of $I_3 \lno \mathcal{G}_{2\times4} \rno$, i.e., $$\totalthreecut \lno \mathcal{G}_{2\times4} \rno = \left\langle \begin{split}
        &\lno a_2 a_4 b_1 b_3 b_4 \rno,
        \lno a_2 a_4 b_1 b_2 b_3 \rno,
        \lno a_2 a_3 b_1 b_3 b_4 \rno,
        \lno a_2 a_3 b_1 b_2 b_4 \rno,\\
        &\lno a_2 a_3 a_4 b_1 b_3 \rno,
        \lno a_2 a_3 b_2 b_3 b_4 \rno,
        \lno a_1 a_3 b_1 b_2 b_4 \rno,
        \lno a_1 a_3 a_4 b_2 b_4 \rno,\\
        &\lno a_1 a_3 a_4 b_2 b_3 \rno,
        \lno a_1 a_2 a_4 b_2 b_3 \rno,
        \lno a_1 a_2 a_4 b_1 b_3 \rno,
        \lno a_1 a_2 a_3 b_2 b_4 \rno
    \end{split} \right\rangle.$$
\end{example}

\begin{definition}[\cite{bayer2024cut}, Definition~2.7]\label{def:cutcomplex}
Let $G = (V, E)$ be a graph and let $k \ge 1$. 
The \emph{$k$-cut complex} of $G$, denoted by $\kcut \lno G \rno$, 
is the simplicial complex whose facets are the complements of subsets of $V(G)$ of size $k$ whose induced subgraphs are disconnected. 
Equivalently, a subset $\sigma \subseteq V(G)$ is a face of $\kcut\lno G\rno$ if and only if its complement $V(G) \setminus \sigma$ induces a disconnected subgraph on $k$ vertices.
\end{definition}

\begin{example}
    Let $G_2$ be the graph shown in  \Cref{fig: graph G2}.    
    Let $S^{(3)}\lno G_2 \rno$ be the set of all 3-element subsets $\sigma$ of $V\lno \mathcal{G}_{2\times 3} \rno$ for which the induced subgraph $G_2 \lsq \sigma \rsq$ is disconnected, i.e., $$S^{(3)}\lno G_2 \rno = \lcu \lno 123 \rno, \lno 125 \rno, \lno 145 \rno, \lno 234 \rno, \lno 345 \rno\rcu.$$
    
    Consequently, the $3-$cut complex of $G_2$ (see \Cref{fig: 3-cut of the graph G2}) has complements of the elements of $S^{(3)} \lno G_2 \rno$ as its facets, i.e., $$\Delta_3 \lno G_2 \rno = \left\langle \lno 12 \rno, \lno 23 \rno, \lno 34 \rno, \lno 45 \rno, \lno 51 \rno \right\rangle.$$
\end{example}

\begin{figure}[H]
    \vspace{-1.5em}
    \hspace{-3em}
    \begin{subfigure}[b]{0.5\textwidth}
    \centering
    \hspace{1em}
    \begin{tikzpicture}[line cap=round,line join=round,>=triangle 45,x=0.75cm,y=0.75cm, roundnode/.style={scale = 0.5, circle, draw=green!60, fill=green!5, thick, minimum size=2.5em}]
        \clip(0.5,-0.5) rectangle (5,4);
        \draw [line width=1pt] (1.5,0)-- (4.118033988749895,1.9021130325903064);
        \draw [line width=1pt] (1.5,0)-- (2.5,3.077683537175253);
        
        \draw [line width=1pt] (3.5,0)-- (0.8819660112501053,1.9021130325903073);
        \draw [line width=1pt] (3.5,0)-- (2.5,3.077683537175253);
        
        \draw [line width=1pt] (0.8819660112501053,1.9021130325903073)-- (4.118033988749895,1.9021130325903064);
    
        \draw (2.5,3.077683537175253) node [roundnode] {\huge \textbf{1}};
        \draw (4.118033988749895,1.9021130325903064) node [roundnode] {\huge \textbf{2}};
        \draw (3.5,0) node [roundnode] {\huge \textbf{3}};
        \draw (1.5,0) node [roundnode] {\huge \textbf{4}};
        \draw (0.9563340376883458,1.9021130325903064) node [roundnode] {\huge \textbf{5}};
    \end{tikzpicture}
    \caption{The graph $G_2$.}
    \label{fig: graph G2}
    \end{subfigure}%
    \hspace{-3em}
    \begin{subfigure}[b]{0.5\textwidth}
    \centering
    \hspace{1em}
    \begin{tikzpicture}[line cap=round,line join=round,>=triangle 45,x=0.75cm,y=0.75cm, roundnode/.style={scale = 0.5, circle, draw=green!60, fill=green!5, thick, minimum size=2.5em}]
        \clip(0.5,-0.5) rectangle (5,4);
        \draw [line width=1pt] (1.5,0)-- (3.5,0);
        \draw [line width=1pt] (3.5,0)-- (4.118033988749895,1.9021130325903064);
        \draw [line width=1pt] (4.118033988749895,1.9021130325903064)-- (2.5,3.077683537175253);
        \draw [line width=1pt] (2.5,3.077683537175253)-- (0.8819660112501053,1.9021130325903073);
        \draw [line width=1pt] (0.8819660112501053,1.9021130325903073)-- (1.5,0);
    
        \draw (2.5,3.077683537175253) node [roundnode] {\huge \textbf{1}};
        \draw (4.118033988749895,1.9021130325903064) node [roundnode] {\huge \textbf{2}};
        \draw (3.5,0) node [roundnode] {\huge \textbf{3}};
        \draw (1.5,0) node [roundnode] {\huge \textbf{4}};
        \draw (0.9563340376883458,1.9021130325903064) node [roundnode] {\huge \textbf{5}};
    \end{tikzpicture}
    \caption{$\Delta_3\lno G_2 \rno$.}
    \label{fig: 3-cut of the graph G2}
    \end{subfigure}
    \caption{$3-$cut complex of the graph $G_2$.}
\end{figure}

\begin{example}
    For $\mathcal{G}_{2\times 3}$ (see \Cref{fig: graph of G(2×n)}), we have,
    $$ S^{(3)}\lno \mathcal{G}_{2\times 3} \rno = \lcu \begin{split}
        \lno a_1 a_2 b_3 \rno, \lno a_1 a_3 b_1 \rno, \lno a_1 a_3 b_2 \rno, \lno a_1 a_3 b_3 \rno, \lno a_1 b_1 b_3 \rno,\\
        \lno a_1 b_2 b_3 \rno, \lno a_2 a_3 b_1 \rno, \lno a_2 b_1 b_3 \rno, \lno a_3 b_1 b_2 \rno, \lno a_3 b_1 b_3 \rno
    \end{split} \rcu.$$

    Then the $3-$cut complex of $\mathcal{G}_{2\times 3}$ has facets which are complements of the elements of $S^{(3)} \lno G_2 \rno$, i.e., 
    $$ \Delta_3\lno \mathcal{G}_{2\times 3} \rno = \left\langle
    \begin{split}
        \lno a_1 a_2 b_2 \rno, \lno a_1 a_2 b_3 \rno, \lno a_1 a_3 b_2 \rno, \lno a_1 b_2 b_3 \rno, \lno a_2 a_3 b_1 \rno,\\
        \lno a_2 a_3 b_2 \rno, \lno a_2 b_1 b_2 \rno, \lno a_2 b_1 b_3 \rno, \lno a_2 b_2 b_3 \rno, \lno a_3 b_1 b_2 \rno
    \end{split} \right\rangle.$$
\end{example}

We now discuss some results related to $\totalkcut\lno G \rno$ and $\kcut \lno G \rno$ that will be used repeatedly throughout the article.

\begin{lemma}[\cite{bayer2024total}, Lemma~3.3]\label{lemma: link}
    Let $k\geq 2$, $G = \lno V,E\rno$ be a graph, and $W$ be a face of $\totalkcut\lno G\rno$. Then, 
    $$\totalkcut\lno G \setminus W\rno = \lk_{\totalkcut\lno G\rno}\lno W\rno.$$
\end{lemma}
\begin{corollary}\label{coro:link of a vertex}
    Let $G$ be a graph and $x\in V(G)$. Then, $lk_{\totalkcut\lno G\rno}\lno \lcu x\rcu\rno = \totalkcut\lno G\setminus \lcu x\rcu\rno$.
\end{corollary}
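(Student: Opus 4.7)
The plan is to apply \Cref{lemma: link} directly with the special choice $W = \lcu x \rcu$. The main observation is that a singleton $\lcu x \rcu$ is a face of $\totalkcut\lno G \rno$ precisely when its complement $V\lno G \rno \setminus \lcu x \rcu$ contains an independent set of size $k$, which is exactly the condition for $\totalkcut\lno G \setminus \lcu x \rcu \rno$ to be non-void. So in the substantive case, the corollary is a literal instance of the lemma.

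More concretely, I would first check that $\lcu x \rcu$ qualifies as a face: if $G \setminus \lcu x \rcu$ admits an independent set $I$ of size $k$, then $V\lno G \rno \setminus I \supseteq \lcu x \rcu$ is a face of $\totalkcut\lno G \rno$ by \Cref{def:totalcutcomplex}, hence $\lcu x \rcu$ is a face as well. Plugging $W = \lcu x \rcu$ into \Cref{lemma: link} then yields
\[
\lk_{\totalkcut\lno G \rno}\lno \lcu x \rcu \rno \;=\; \totalkcut\lno G \setminus \lcu x \rcu \rno.
\]

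The only edge case is when $G \setminus \lcu x \rcu$ contains no independent set of size $k$; then $\lcu x \rcu$ is not a face of $\totalkcut\lno G \rno$, so the link is the void complex, and $\totalkcut\lno G \setminus \lcu x \rcu \rno$ is also void, so the equality trivially holds. There is no real obstacle here; the entire content of the corollary lies in \Cref{lemma: link}, and the proof is a one-line specialization.
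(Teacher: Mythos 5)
Your proof is correct and takes exactly the route the paper intends: the corollary is stated with no proof precisely because it is the $W=\{x\}$ specialization of \Cref{lemma: link}, and your handling of the edge case (when $\{x\}$ is not a face, both sides are void) is the only thing to check.
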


\begin{definition}
    A vertex $v$ of graph $G$ is called a \textit{simplicial vertex} if $G\lsq N\lno v\rno\rsq$ is a clique, i.e., every pair of distinct vertices in $N\lno v\rno$ is adjacent in $G$.
\end{definition}

\begin{lemma}[\cite{bayer2024total}, Lemma~3.5]\label{lemma: deletion of v}
    Let $k\geq2$ and $G = \lno V, E\rno$ be a graph with a simplicial vertex $v$. If $\Delta = \totalkcut\lno G\rno$ then $\del_{\Delta}\lno v\rno$ is a pure simplicial complex generated by the facets of $\totalkcutn\lno G\setminus\lcu v\rcu\rno$ that contain $N\lno v\rno$, i.e.,
    $$\del_{\Delta}\lno v\rno = \st_{\totalkcutn\lno G\setminus\lcu v\rcu\rno}\lno N\lno v\rno\rno.$$
\end{lemma}

A vertex $v\in V(G)$ is called a \emph{leaf} if $|N(v)|=1$; here $|\cdot |$ denotes the cardinality of a set. The following is a particular case of the above result.
\begin{corollary}\label{coro: deletion of a vertex}
    Let $G$ be a graph and $x$ be a leaf vertex of $G$. Then $\del_{\totalkcut\lno G\rno}\lno x\rno \simeq *.$
\end{corollary}

\begin{lemma}[\cite{bayer2024total}, Theorem~3.6] \label{thm: suspension of previous graph}
    Let $G$ be a graph and $v \in V\lno G\rno$ be a simplicial vertex with non-empty neighborhood $N\lno v\rno$ and $k\geq2$. Let $H = G \setminus \lcu v \rcu$. If $\totalkcut\lno H \rno$ is not the void complex then
    $$ \totalkcut \lno G \rno \simeq \Sigma \lno \totalkcut \lno G \setminus \lcu v \rcu \rno\rno,$$
    and hence there is an isomorphism in the reduced homology $\tilde{H}_{n} \lno \totalkcut \lno G \rno \rno \cong \tilde{H}_{n-1} \lno \totalkcut \lno H \rno \rno$, for all $n \geq 1$.
    If $\totalkcut\lno H \rno$ is the void complex, then $\totalkcut\lno G \rno$ is either contractible or the void complex.
\end{lemma}

The following result is similar to \Cref{lemma: link} for cut complexes.
\begin{lemma}[\cite{bayer2024cut}, Lemma 4.5]\label{lemma:link for cut}
    Let $k \geq 2$, $G$ be a graph, and $W \subseteq V(G)$. Then $\Delta_k(G \setminus W) = \lk_{\Delta_k(G)}(W)$ if $W$ is a face of $\Delta_k(G)$ and is void otherwise.
\end{lemma}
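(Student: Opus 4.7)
The plan is to unwind the definitions directly and split into two cases according to whether $W$ is a face of $\Delta_k(G)$. This mirrors the argument for \Cref{lemma: link} on total cut complexes, with one extra subtlety arising from the ``void otherwise'' clause that is not present in the total cut version.

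First, I dispose of the void case. Suppose $W \notin \Delta_k(G)$. If the link $\lk_{\Delta_k(G)}(W)$ contained any face $\tau$, then by the definition of link we would have $\tau \cup W \in \Delta_k(G)$, and downward closure of a simplicial complex would force $W \in \Delta_k(G)$, contradicting the assumption. Hence the link contains no face at all (not even the empty face), so it is the void complex, as claimed.

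Next, assume $W \in \Delta_k(G)$. I will verify $\lk_{\Delta_k(G)}(W) = \Delta_k(G \setminus W)$ as simplicial complexes on the common vertex set $V(G) \setminus W$. Fix $\sigma \subseteq V(G) \setminus W$. Unwinding Definition~\ref{def:cutcomplex} applied to $G \setminus W$ gives
\[
\sigma \in \Delta_k(G \setminus W) \iff \exists\, T \subseteq (V(G) \setminus W) \setminus \sigma \text{ with } |T| = k \text{ and } (G \setminus W)[T] \text{ disconnected}.
\]
On the other hand, the definition of the link followed by Definition~\ref{def:cutcomplex} yields
\[
\sigma \in \lk_{\Delta_k(G)}(W) \iff \sigma \cup W \in \Delta_k(G) \iff \exists\, T \subseteq V(G) \setminus (\sigma \cup W) \text{ with } |T| = k \text{ and } G[T] \text{ disconnected}.
\]
These two right-hand conditions coincide: the identity $(V(G)\setminus W)\setminus\sigma = V(G)\setminus(\sigma\cup W)$ matches the ambient sets, and since $G\setminus W$ is by definition the induced subgraph of $G$ on $V(G)\setminus W$, we have $(G\setminus W)[T] = G[T]$ for every $T \subseteq V(G)\setminus W$. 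Thus the two complexes have the same faces.

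I do not anticipate any real obstacle; the proof is essentially definitional bookkeeping. The only care required is to track the two distinct ``ambient complements'' (one in $V(G)$, one in $V(G)\setminus W$) and to handle the void case separately, since the convention that a simplicial complex contains $\emptyset$ breaks down precisely when $W \notin \Delta_k(G)$.
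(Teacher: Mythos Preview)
The paper does not actually supply a proof of this lemma; it is quoted verbatim from \cite{bayer2024cut} (Lemma~4.5) and used as a black box, so there is no in-paper argument to compare against. That said, your proof is correct and is exactly the kind of definitional unwinding one would expect: the void case follows from downward closure, and in the main case you correctly identify that $(V(G)\setminus W)\setminus\sigma = V(G)\setminus(\sigma\cup W)$ together with $(G\setminus W)[T]=G[T]$ for $T\subseteq V(G)\setminus W$ makes the two face conditions coincide. The only cosmetic remark is that you might state explicitly that every face of either complex automatically lies in $V(G)\setminus W$ (so restricting to such $\sigma$ loses nothing), but this is implicit in your setup and not a gap.
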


Finally, we end this section with the following identity, called the \emph{Hockey Stick Identity}, involving binomial coefficients. 

\begin{prop} [\cite{graham94}, Equation (5.10); \cite{hockey}, Equation (2); \cite{StanleyenumerativeI}, Exercise 3(a), p.104] \label{prop: binomial result}
    For $n\geq 1$ and $k\geq 0$, $$\sum\limits_{i=1}^k \binom{n-i}{k-i} = \binom{n}{k-1}.$$
\end{prop}


\section{Total $k$-cut complex  of $\lno 2\times n\rno$ Grid Graph}\label{section: Total k-cut of 2n gg}
In this section, we determine the homotopy type of $\totalkcut\lno \twogg\rno$ for all $n \ge 2$ and $1 \le k \le n$. 
The upper bound $k \le n$ arises because, in the grid graph $G_{2\times n}$, an independent set can have at most $n$ vertices. 
Consequently, for $k > n$, the total $k$-cut complex $\totalkcut\lno \twogg\rno$ is void.

It is easy to see that, for $k=1$ and any graph $G$, $\Delta_{k}^t (G)$ is precisely the boundary of a simplex on the vertex set $V(G)$. As a result, we have $$\Delta_{1}^t (G) \simeq \bbS^{|V(G)|-2}.$$

Hence, we can assume that $k\geq 2$. For simplicity of notations, let us denote $\totalkcut \lno \twogg \rno$ by $\Delta_{k,n}^t$. The following result will be useful in finding the homotopy of the $\Delta_{k,n}^t$.

\begin{lemma}\label{lemma: del(b_n) = st(a_(n-1)) U  st(b_(n-1))}
    Let $\del\lno b_n \rno = \del_{\Delta_{k,n}^t}\lno b_n\rno$. Then,
    $$\del\lno b_n\rno = \st_{\del\lno b_n \rno}\lno a_{n-1}\rno \cup \st_{\del\lno b_n \rno}\lno b_{n-1}\rno.$$
\end{lemma}
\begin{proof}
Let $\sigma$ be a facet of $\operatorname{del}(b_n)$. 
If $a_{n-1} \in \sigma$, then $\sigma \in \operatorname{st}_{\operatorname{del}(b_n)}(a_{n-1})$, and the result follows. 
If $a_{n-1} \notin \sigma$, we claim that $b_{n-1} \in \sigma$; otherwise, both $a_{n-1}$ and $b_{n-1}$ would lie in the $k$-independent set contained in $\sigma^c$, contradicting the independence of that set. 
Hence $\sigma \in \operatorname{st}_{\operatorname{del}(b_n)}(b_{n-1})$, as required.
\end{proof}

Before presenting the main result, we state a preliminary result that will be useful in its proof. Recall that $\langle F\rangle$ denotes a simplex on the vertex set $F$. Also, for any graph $G$, we use $\Delta_0^t \left( G \right)$ to denote the simplicial complex whose only facet is the simplex with vertex set $V(G)$, i.e., $\Delta_0^t \left( G \right) = \langle V(G) \rangle$.

\begin{lemma}\label{claim: homotopy of total k-cut of 2×n gg}
Let $\Delta' = \del_{\Delta_{k,m+1}^t}\lno b_{m+1} \rno$. Then,
    \begin{enumerate}
        \item $\st_{\Delta'}\lno a_m \rno \cap \st_{\Delta'}\lno b_m \rno = \bigcup\limits_{i=1}^k K_i$, where $$K_1 = \left\langle \lcu a_m, a_{m+1}, b_m \rcu\right\rangle * \Delta_{k-1, m-1}^t,$$ and for $2\leq i \leq k$, $$K_i = \left\langle \lcu a_{m-i+1}, b_{m-i+1} \rcu\right\rangle * \Delta_{k-i, m-i}^t.$$ 

        \item $K_i \cap K_j \subseteq K_1$, for all $2 \leq i < j \leq k$. 
    \end{enumerate}  
\end{lemma}
    \begin{proof}
    
    \begin{enumerate}
        \item To prove this, we first show that 
$\st_{\Delta'}\lno a_m \rno \cap \st_{\Delta'}\lno b_m \rno \supseteq \bigcup\limits_{i=1}^k K_i.$
For this, let $\sigma \in \bigcup\limits_{i=1}^k K_i$. If $\sigma \in K_1$, the claim follows immediately. 
Assume now that $\sigma \in K_i$ for some $i > 1$. We proceed by constructing appropriate simplices depending on whether $i$ is even or odd:

\begin{enumerate}
    \item If $i$ is even, define $\tau_1$ and $\tau_2$ as follows:
    \begin{align*}
        \tau_1 &= \sigma \cup \lcu a_{m-i+2}, b_{m-i+3}, a_{m-i+4}, \dots, b_{m-1}, a_m \rcu,\\
        \tau_2 &= \sigma \cup \lcu b_{m-i+2}, a_{m-i+3}, b_{m-i+4}, \dots, a_{m-1}, b_m, a_{m+1} \rcu;
    \end{align*}

    \item If $i$ is odd, define $\tau_3$ and $\tau_4$ as follows:
    \begin{align*}
        \tau_3 &= \sigma \cup \lcu a_{m-i+2}, b_{m-i+3}, a_{m-i+4}, \dots, a_{m-1}, b_m, a_{m+1} \rcu,\\
        \tau_4 &= \sigma \cup \lcu b_{m-i+2}, a_{m-i+3}, b_{m-i+4}, \dots, b_{m-1}, a_m \rcu.
    \end{align*}
\end{enumerate}

It is easy to verify that $\tau_l \in \Delta'$ for all $1 \leq l \leq 4$. Consequently, both $\sigma \cup \lcu a_m \rcu \in \Delta'$ and $\sigma \cup \lcu b_m \rcu \in \Delta'$ hold, which implies that $\sigma \in \st_{\Delta'}\lno a_m \rno \cap \st_{\Delta'}\lno b_m \rno.$

To show that $\st_{\Delta'}\lno a_m \rno \cap \st_{\Delta'}\lno b_m \rno \subseteq \bigcup\limits_{i=1}^k K_i$,
let $\sigma \in \st_{\Delta'}\lno a_m \rno \cap \st_{\Delta'}\lno b_m \rno$. 
Without loss of generality (WLOG), assume that $\sigma$ is a facet of $ \st_{\Delta'}\lno a_m \rno \cap \st_{\Delta'}\lno b_m \rno$. 

If $a_{m+1} \in \sigma$, the fact that $\sigma \cup \{a_m\} \in \Delta'$ implies 
$\sigma \cup \{a_m,b_m\} \in \Delta'$, so $\sigma \in K_1$. 
Now consider the case $a_{m+1} \notin \sigma$. 

Let $C_i = \lcu a_{m-i+1}, b_{m-i+1} \rcu$ for $1 \le i \le k$, 
i.e., $C_i$ denotes the set of vertices in the $(i+1)$th column of $\mathcal{G}_{2\times(m+1)}$ from the right. 
If $\sigma \cap C_i = \emptyset$ for all $1 \le i \le k$, then $\sigma \in K_1$. 
If instead $C_i \subseteq \sigma$ for some smallest $1 \le i \le k$, then $\sigma \in K_i$. 

The remaining possibility is that $\sigma$ contains exactly one vertex from some $C_i$. 
We claim that this case cannot occur. 
Let $i$ be the smallest index such that $a_{m-i+1} \in \sigma$ and $b_{m-i+1} \notin \sigma$; 
equivalently, $\sigma \cap \lcu a_{m-j+1}, b_{m-j+1} \rcu = \emptyset$ for all $j < i$. 
We divide the argument into three cases, showing in each that 
$\sigma \cup \{b_{m-i+1}\} \in \st_{\Delta'}\lno a_m \rno \cap \st_{\Delta'}\lno b_m \rno$, 
contradicting the assumption that $\sigma$ is a facet of $\st_{\Delta'}\lno a_m \rno \cap \st_{\Delta'}\lno b_m \rno$.
    
    \begin{enumerate}
        \item \textbf{Case $i=1$:} 
        Since $a_m \in \sigma$ and $\sigma \in \st_{\Delta'}\lno b_m \rno$, it follows that 
        $\sigma \cup \{b_m\} \in \st_{\Delta'}\lno a_m \rno \cap \st_{\Delta'}\lno b_m \rno$.
    
        \item \textbf{Case $i$ even:} 
        Let $S_1$ and $S_2$ be two $k$-independent sets in $(\sigma \cup \{a_m\})^c$ and $(\sigma \cup \{b_m\})^c$, respectively. 
        In this and the next case, $A^c$ denotes the set $V(\mathcal{G}_{2\times (m+1)}) \setminus A$. 
        We may assume that $b_m \in S_1$ and $a_m \in S_2$; otherwise, 
        $\sigma \cup \{a_m,b_m\}$ would be a face of $\Delta'$, contradicting the assumption that 
        $\sigma$ is a facet of $\st_{\Delta'}\lno a_m \rno \cap \st_{\Delta'}\lno b_m \rno$. 

        Our goal is to construct $S_1'$ and $S_2'$ in $(\sigma \cup \{a_m\})^c$ and $(\sigma \cup \{b_m\})^c$, respectively, 
        such that $b_{m-i+1} \notin S_1', S_2'$. 
        This will imply that $\sigma \cup \{b_{m-i+1}\} \in \st_{\Delta'}\lno a_m \rno \cap \st_{\Delta'}\lno b_m \rno$.

        We may further assume that $b_{m-i+1}$ belongs to both $S_1$ and $S_2$, as the same sets otherwise suffice. 
        Let $\mathcal{C} = \bigsqcup\limits_{j=1}^{i-1} C_j$, where $C_j = \lcu a_{m-j+1}, b_{m-j+1} \rcu$. 
        Since $b_m, b_{m-i+1} \in S_1$, with $i > 2$, we have $|S_1 \cap \mathcal{C}| \le i-2$. 
        Moreover, because $\sigma$ is a facet in $\st_{\Delta'}\lno a_m \rno \cap \st_{\Delta'}\lno b_m \rno$, equality holds, i.e., $|S_1 \cap \mathcal{C}| = i-2$. 

        Define $S_1'$ and $S_2'$ as follows:
        \begin{align*}
            S_1' &= \lno S_1 \setminus (\{b_{m-i+1}\} \cup (S_1         \cap \mathcal{C})) \rno 
                \cup \{b_{m-i+2}, a_{m-i+3}, b_{m-i+4}, \dots, a_{m-1}, b_m\},\\[4pt]
            S_2' &= \lno S_1 \setminus (\{b_{m-i+1}, a_{m+1}\}          \cup (S_1 \cap \mathcal{C})) \rno 
                \cup \{a_{m-i+2}, b_{m-i+3}, a_{m-i+4}, \dots, b_{m-1}, a_m, b_{m+1}\}.
        \end{align*}
        Then $S_1'$ and $S_2'$ are $k$-independent sets in $(\sigma \cup \{a_m\})^c$ and $(\sigma \cup \{b_m\})^c$, respectively, 
        completing the argument for the case when $i$ is even.
        
        \item \textbf{Case $i>1$ odd:}
            The argument proceeds similarly to the even case, with the only difference being the construction of the sets ${S}_1^\prime$ and ${S}_2^\prime$. Here, we use the set $S_2$ to define ${S}_1^\prime$ and ${S}_2^\prime$. 
    
            Since $a_m, b_{m-i+1}\in S_2$, we have $|S_2 \cap \mathcal{C}|\leq i-2$. Because $\sigma$ is a facet of $\st_{\Delta'}\lno a_m \rno \cap \st_{\Delta'}\lno b_m \rno$, equality must hold, i.e., $|S_2 \cap \mathcal{C}|= i-2$.  Define,
            \begin{align*}
               {S}_1^\prime & = \lno S_2\setminus (\{b_{m-i+1},b_{m+1}\} \cup (S_2\cap \mathcal{C})) \rno  \cup \{a_{m-i+2},b_{m-i+3},a_{m-i+4},\dots,a_{m-1},b_m,a_{n+1}\},\\
                {S}_2^\prime & = \lno S_2\setminus (\{b_{m-i+1},b_{m+1}\} \cup (S_2\cap \mathcal{C})) \rno  \cup \{b_{m-i+2},a_{m-i+3},b_{m-i+4},\dots,b_{m-1},a_m\}.
            \end{align*}
            The rest of the argument follows as in the even case.
    \end{enumerate}
        Consequently, $\sigma \in \bigcup\limits_{i=0}^k K_i$, which completes the proof of \Cref{claim: homotopy of total k-cut of 2×n gg}(1).

    \item First observe that for $2 \le i < j \le k$ we have
\[
K_i \cap K_j 
  = \lsq \Delta_{k-i,\,m-i}^t \rsq 
    \cap 
    \lsq \langle \lno a_{m-j+1}, b_{m-j+1} \rno \rangle * \Delta_{k-j,\,m-j}^t \rsq.
\]
Indeed, $a_{m-i+1}$ and $b_{m-i+1}$ never appear in any face of $K_j$. 
Moreover, if $\sigma \in K_i \cap K_j$, then $\sigma$ contains no vertex to the right of $C_j$, and exactly one vertex of $C_j$ may lie in the $k$-independent set contained in $\sigma^c$. 
This determines the intersection to be the complex $\Delta_{k-j+1,\,m-j+1}^t$, i.e.,
\[
K_i \cap K_j = \Delta_{k-j+1,\,m-j+1}^t.
\]

Since $\Delta_{k-j+1,\,m-j+1}^t \subseteq K_1$, we obtain $K_i \cap K_j \subseteq K_1$ for all such $i$ and $j$.
\end{enumerate}

This completes the proof of \Cref{claim: homotopy of total k-cut of 2×n gg}.

\end{proof}

    We now prove the main result of this section.

\begin{theorem}\label{thm: homotopy type of total k-cut complex of 2n-gg}
    For $2 \le k \le n$, the total $k$-cut complex of $\twogg$, denoted $\Delta_{k,n}^t$, is homotopy equivalent to a wedge of $\binom{n-1}{k-1}$ spheres of dimension $\lno 2n-2k\rno$, i.e.,
    \[
        \Delta_{k,n}^t \simeq \bigvee_{\binom{n-1}{k-1}} \bbS^{\lno 2n-2k\rno}.
    \]
\end{theorem}

We prove the result by induction on $n$. 
For the inductive step, assume that the statement holds for all $n \le m$. 
To establish the case $n = m+1$, we analyze the link and deletion of the vertex $b_{m+1}$ in $\Delta_{k,m+1}^t$. 
The homotopy type of the link of $b_{m+1}$ is determined in \Cref{link of b_(m+1)}, and the homotopy type of the deletion of $b_{m+1}$ is computed in \Cref{deletion of b_n+1} using \Cref{lemma: bjorner's lemma 10.4(b)}, 
\Cref{lemma: del(b_n) = st(a_(n-1)) U  st(b_(n-1))}, 
and \Cref{claim: homotopy of total k-cut of 2×n gg}. 
Finally, we show that the link of $b_{m+1}$ is contractible inside the deletion of $b_{m+1}$, and \Cref{lemma: finding homotopy using link and deletion} then yields the desired conclusion.

\begin{proof}
    Although our argument applies for all $2 \le k \le n$, we note that the case $k=2$ also follows from \cite[Theorem~4.16]{bayer2024total}. 
    
    We proceed by induction on $n$. The statement is immediate when $n=2$. 
    Assume that the result holds for all $n \le m$, and consider the case $n=m+1$. 
    
    When $k=m+1$, the complex $\Delta_{k,m+1}^t$ consists of the following two facets, depending on whether $m$ is even or odd:
    \begin{enumerate}
        \item If $m$ is even,
        \[
        \Delta_{m+1,m+1}^t = \left\langle
            \lcu a_1, b_2, a_3, b_4,\dots, a_{m-1}, b_m, a_{m+1} \rcu,\,
            \lcu b_1, a_2, b_3, a_4,\dots, b_{m-1}, a_m, b_{m+1} \rcu
        \right\rangle;
        \]

        \item If $m$ is odd,
        \[
        \Delta_{m+1,m+1}^t = \left\langle
            \lcu a_1, b_2, a_3, b_4,\dots, b_{m-1}, a_m, b_{m+1} \rcu,\,
            \lcu b_1, a_2, b_3, a_4,\dots, a_{m-1}, b_m, a_{m+1} \rcu
        \right\rangle.
        \]
    \end{enumerate}
    
    In each case, $\Delta_{m+1,m+1}^t$ is homotopy equivalent to $\bbS^{0}$. Hence, the claim holds when $k=m+1$.

    Now let $2 \le k \le m$. 
    In this case, we determine the homotopy types of the link and deletion of the vertex $b_{m+1}$ in $\Delta_{k,m+1}^t$, and then apply \Cref{lemma: finding homotopy using link and deletion}.
    
    \medskip
    
    \noindent\textbf{Link of the vertex $b_{m+1}$ in $\Delta_{k,m+1}^t$:}
    By \Cref{coro:link of a vertex},
    \[
    \lk_{\Delta_{k,m+1}^t}\lno b_{m+1}\rno 
        = \totalkcut\lno \mathcal{G}_{2\times m}' \rno.
    \]
    
    The vertex $a_{m+1}$ is a leaf in $\mathcal{G}_{2\times m}'$ 
    (see \Cref{fig: graph of G'(2×n)}), and 
    \Cref{thm: suspension of previous graph} gives
    \[
    \totalkcut\lno \mathcal{G}_{2\times m}' \rno 
        \simeq 
        \Sigma\!\left( \totalkcut\lno \mathcal{G}_{2\times m} \rno \right).
    \]
    
    The induction hypothesis then yields
    \begin{equation}\label{link of b_n+1}
        \lk_{\Delta_{k,m+1}^t}\lno b_{m+1}\rno 
        \simeq 
        \bigvee_{\binom{m-1}{k-1}} \bbS^{\lno 2m-2k+1\rno}.
\end{equation}

    \noindent\textbf{Deletion of the vertex $b_{m+1}$ in $\Delta_{k,m+1}^t$:} 
    Recall that $\Delta' = \del_{\Delta_{k,m+1}^t}\lno b_{m+1} \rno$. 
    By \Cref{lemma: del(b_n) = st(a_(n-1)) U  st(b_(n-1))},
    \[
    \Delta' = \st_{\Delta'}\lno a_m \rno \cup \st_{\Delta'}\lno b_m \rno.
    \]
    
    Since both $\st_{\Delta'}\lno a_m \rno$ and $\st_{\Delta'}\lno b_m \rno$ are contractible,
    \Cref{lemma: bjorner's lemma 10.4(b)} gives
    \[
    \Delta' \simeq \Sigma\!\left( \st_{\Delta'}\lno a_m \rno \cap 
                                    \st_{\Delta'}\lno b_m \rno \right).
    \]
    
    It remains to determine the homotopy type of 
    $\st_{\Delta'}\lno a_m \rno \cap \st_{\Delta'}\lno b_m \rno$. 
    Each $K_i$ defined in 
    \Cref{claim: homotopy of total k-cut of 2×n gg} is contractible for 
    $1 \le i \le k$. 
    Using \Cref{claim: homotopy of total k-cut of 2×n gg} together with 
    \Cref{lemma: bjorner's lemma 10.4(b)}, we obtain
    \[
    \st_{\Delta'}\lno a_m \rno \cap \st_{\Delta'}\lno b_m \rno 
        \simeq 
        \bigvee_{2 \le i \le k} 
            \Sigma\!\left( K_1 \cap K_i \right).
    \]

    To determine the homotopy type of 
    $\st_{\Delta'}\lno a_m \rno \cap \st_{\Delta'}\lno b_m \rno$, it suffices to compute the homotopy types of $K_1 \cap K_i$ for 
    $2 \le i \le k$. 
    Using arguments analogous to those used for $K_i \cap K_j$ in 
    \cref{claim: homotopy of total k-cut of 2×n gg}, we obtain
    \[
    K_1 \cap K_i 
        = \Delta_{k-i+1}^t\!\left( \mathcal{G}_{2\times (m-i+1)} \right).
    \]
    
    By the induction hypothesis,
    \[
    K_1 \cap K_i 
        = \bigvee_{\binom{m-i}{\,k-i\,}} 
            \bbS^{\lno 2m - 2k \rno}.
    \]
    
    Consequently,
    \[
    \st_{\Delta'}\lno a_m \rno \cap \st_{\Delta'}\lno b_m \rno
        \simeq 
        \bigvee_{N} \bbS^{\lno 2m - 2k + 1 \rno},
    \]
    where $N = \binom{m-2}{k-2}+\binom{m-3}{k-3}+\dots+\binom{m-k}{k-k}$.
    From \Cref{prop: binomial result}, $N = \binom{m-1}{k-2}$. 
    Therefore,
    \begin{equation}\label{deletion of b_n+1}
        \Delta' 
        = \del_{\Delta_{k,m+1}^t}\lno b_{m+1} \rno 
        \simeq 
        \bigvee_{\binom{m-1}{k-2}} 
            \bbS^{\lno 2m - 2k + 2 \rno}.
    \end{equation}

From \Cref{link of b_n+1}, the link of $b_{m+1}$ in $\Delta_{k,m+1}^t$ is homotopy equivalent to a wedge of spheres of dimension $2m - 2k + 1$, and \Cref{deletion of b_n+1} shows that the deletion of $b_{m+1}$ in $\Delta_{k,m+1}^t$ is homotopy equivalent to a wedge of spheres of dimension $2m - 2k + 2$. 
Since the former is contractible inside the latter, 
\Cref{lemma: finding homotopy using link and deletion} yields
\[
\Delta_{k,m+1}^t 
    \simeq 
    \del_{\Delta_{k,m+1}^t}\lno b_{m+1} \rno 
    \,\vee\, 
    \Sigma\!\left( \lk_{\Delta_{k,m+1}^t}\lno b_{m+1}\rno \right).
\]

\noindent Consequently,
\begin{align*}
    \Delta_{k,m+1}^t 
        &\simeq 
            \left( \bigvee_{\binom{m-1}{k-2}} 
                    \bbS^{\lno 2m - 2k + 2 \rno} \right)
            \,\vee\,
            \Sigma\!\left( 
                \bigvee_{\binom{m-1}{k-1}} 
                \bbS^{\lno 2m - 2k + 1 \rno}
            \right)\\[4pt]
        &\simeq 
            \bigvee_{\binom{m-1}{k-2} + \binom{m-1}{k-1}} 
            \bbS^{\lno 2m - 2k + 2 \rno}\\[4pt]
        &\simeq 
            \bigvee_{\binom{m}{k-1}} 
            \bbS^{\lno 2m - 2k + 2 \rno},
\end{align*}
using the identity 
$\binom{m}{k-1} = \binom{m-1}{k-2} + \binom{m-1}{k-1}$. 

This completes the proof of 
\Cref{thm: homotopy type of total k-cut complex of 2n-gg}.
\end{proof}

\section{Total 3-cut complex of $\left( 3 \times n \right) $ grid graph.}\label{section:total cut 3*k}

In this section, we determine the homotopy type of the total $3$-cut complex of $\threegg$, i.e., $\totalthreecut \lno \threegg \rno$ for $n \ge 2$. 
For convenience, we denote $\totalthreecut\lno \threegg \rno$ by $\Delta_n^t$ throughout this section.
The vertex and edge sets of $\threegg$ (see \Cref{fig: graph of G(3×n)}) are given as follows:

\begin{align*}
    V\lno \threegg\rno = &\lcu a_i, b_i, c_i\ \middle|\ 1\leq i \leq n\rcu;\\
    E\lno \threegg\rno = &\lcu \lno a_i, a_{i+1}\rno, \lno b_i, b_{i+1}\rno, \lno c_i, c_{i+1}\rno\in V\lno \threegg\rno\times V\lno \threegg\rno\ \middle|\ 1\leq i \leq n-1\rcu\\& \sqcup \lcu \lno a_j, b_{j}\rno, \lno b_j, c_j\rno \in V\lno \twogg\rno\times V\lno \twogg\rno\ \middle|\ 1\leq j \leq n\rcu.
\end{align*}

\begin{figure}[h!]
    \centering
    \begin{tikzpicture}[line cap=round,line join=round,>=triangle 45,x=0.4cm,y=0.4cm]
        \clip(-1,-2) rectangle (14.5,7);
        \draw [line width=1pt] (0,0)-- (0,3);
        \draw [line width=1pt] (0,3)-- (0,6);
        \draw [line width=1pt] (3,0)-- (3,3);
        \draw [line width=1pt] (3,3)-- (3,6);
        \draw [line width=1pt] (6,0)-- (6,3);
        \draw [line width=1pt] (6,3)-- (6,6);
        \draw [line width=1pt] (6,3)-- (7,3);
        \draw [line width=1pt] (6,6)-- (7,6);
        \draw [line width=1pt] (6,0)-- (7,0);
        \draw [line width=1pt] (0,3)-- (6,3);
        \draw [line width=1pt] (0,0)-- (6,0);
        \draw [line width=1pt] (0,6)-- (6,6);
        \draw [line width=1pt,dash pattern=on 4pt off 4pt] (7,6)-- (9,6);
        \draw [line width=1pt,dash pattern=on 4pt off 4pt] (7,3)-- (9,3);
        \draw [line width=1pt,dash pattern=on 4pt off 4pt] (7,0)-- (9,0);
        \draw [line width=1pt] (9,3)-- (10,3);
        \draw [line width=1pt] (9,6)-- (10,6);
        \draw [line width=1pt] (9,0)-- (10,0);
        \draw [line width=1pt] (10,0)-- (10,3);
        \draw [line width=1pt] (10,3)-- (10,6);
        \draw [line width=1pt] (13,0)-- (13,3);
        \draw [line width=1pt] (13,3)-- (13,6);
        \draw [line width=1pt] (10,6)-- (13,6);
        \draw (-0.2,7.2) node[anchor=north west] {\large$a_{_1}$};
        \draw (12.8,7.1) node[anchor=north west] {\large$a_{_{n}}$};
        \draw (9.8,7.2) node[anchor=north west] {\large$a_{_{n-1}}$};
        \draw (5.8,7.2) node[anchor=north west] {\large$a_{_3}$};
        \draw (2.8,7.2) node[anchor=north west] {\large$a_{_2}$};
        \draw (-0.2,4.5) node[anchor=north west] {\large$b_{_1}$};
        \draw (12.8,4.4) node[anchor=north west] {\large$b_{_{n}}$};
        \draw (9.8,4.5) node[anchor=north west] {\large$b_{_{n-1}}$};
        \draw (5.8,4.5) node[anchor=north west] {\large$b_{_3}$};
        \draw (2.8,4.5) node[anchor=north west] {\large$b_{_2}$};
        \draw (-0.2,1.2) node[anchor=north west] {\large$c_{_1}$};
        \draw (12.8,1.1) node[anchor=north west] {\large$c_{_{n}}$};
        \draw (9.8,1.2) node[anchor=north west] {\large$c_{_{n-1}}$};
        \draw (5.8,1.2) node[anchor=north west] {\large$c_{_3}$};
        \draw (2.8,1.2) node[anchor=north west] {\large$c_{_2}$};
        \draw [line width=1pt] (10,3)-- (13,3);
        \draw [line width=1pt] (10,0)-- (13,0);
        \begin{scriptsize}
            \draw [fill=wwwwww] (0,0) circle (2pt);
            \draw [fill=wwwwww] (0,3) circle (2pt);
            \draw [fill=wwwwww] (0,6) circle (2pt);
            \draw [fill=wwwwww] (3,0) circle (2pt);
            \draw [fill=wwwwww] (3,3) circle (2pt);
            \draw [fill=wwwwww] (3,6) circle (2pt);
            \draw [fill=wwwwww] (6,0) circle (2pt);
            \draw [fill=wwwwww] (6,3) circle (2pt);
            \draw [fill=wwwwww] (6,6) circle (2pt);
            \draw [fill=wwwwww] (10,3) circle (2pt);
            \draw [fill=wwwwww] (10,6) circle (2pt);
            \draw [fill=wwwwww] (10,0) circle (2pt);
            \draw [fill=wwwwww] (13,0) circle (2pt);
            \draw [fill=wwwwww] (13,3) circle (2pt);
            \draw [fill=wwwwww] (13,6) circle (2pt);
        \end{scriptsize}
    \end{tikzpicture}
    \vspace{-2em}
    \caption{The graph $\threegg$.}
    \label{fig: graph of G(3×n)}
\end{figure}

We also define the following graphs, which will play a role in the arguments later in this section.
\begin{enumerate}[label = \arabic*.]
\itemsep0.5em
    \item $H_1 = \mthreeggn \lsq V \lno \mthreeggn \rno \setminus \lcu a_{m+1}\rcu \rsq$ (see \Cref{fig: graph of G'(3×n)});
    \item $H_2 = \mthreeggn \lsq V \lno \mthreeggn\rno \setminus \lcu  a_m,b_m,a_{m+1},b_{m+1} \rcu \rsq$ (see \Cref{fig: graph of G(1)(3×n)});
    \item $H_3 = \mthreegg\lsq V\lno \mthreegg \rno \setminus \lcu b_m, c_m \rcu \rsq$ (see \Cref{fig: graph of H1});
    \item $H_4 = \mathcal{G}_{3\times \lno m-1 \rno}\lsq V \lno \mathcal{G}_{3\times \lno m-1 \rno} \rno \setminus \lcu b_{m-1}, c_{m-1} \rcu \rsq$ (see \Cref{fig: graph of H2}).
\end{enumerate}

\begin{figure}[H]
    \begin{minipage}{0.5\textwidth}
        \centering
        \begin{tikzpicture}[line cap=round,line join=round,>=triangle 45,x=0.4cm,y=0.4cm]
            \clip(-1,-2) rectangle (18,7);
            \draw [line width=1pt] (0,0)-- (0,3);
            \draw [line width=1pt] (0,3)-- (0,6);
            \draw [line width=1pt] (3,0)-- (3,3);
            \draw [line width=1pt] (3,3)-- (3,6);
            \draw [line width=1pt] (6,0)-- (6,3);
            \draw [line width=1pt] (6,3)-- (6,6);
            \draw [line width=1pt] (6,3)-- (7,3);
            \draw [line width=1pt] (6,6)-- (7,6);
            \draw [line width=1pt] (6,0)-- (7,0);
            \draw [line width=1pt] (0,3)-- (6,3);
            \draw [line width=1pt] (0,0)-- (6,0);
            \draw [line width=1pt] (0,6)-- (6,6);
            \draw [line width=1pt,dash pattern=on 4pt off 4pt] (7,6)-- (9,6);
            \draw [line width=1pt,dash pattern=on 4pt off 4pt] (7,3)-- (9,3);
            \draw [line width=1pt,dash pattern=on 4pt off 4pt] (7,0)-- (9,0);
            \draw [line width=1pt] (9,3)-- (10,3);
            \draw [line width=1pt] (9,6)-- (10,6);
            \draw [line width=1pt] (9,0)-- (10,0);
            \draw [line width=1pt] (10,0)-- (10,3);
            \draw [line width=1pt] (10,3)-- (10,6);
            \draw [line width=1pt] (13,0)-- (13,3);
            \draw [line width=1pt] (16,0)-- (16,3);
            \draw [line width=1pt] (13,3)-- (16,3);
            \draw [line width=1pt] (13,3)-- (13,6);
            \draw [line width=1pt] (10,6)-- (13,6);
            \draw (-0.2,7.2) node[anchor=north west] {\large$a_{_1}$};
            \draw (12.8,7.1) node[anchor=north west] {\large$a_{_{m}}$};
            \draw (9.8,7.2) node[anchor=north west] {\large$a_{_{m-1}}$};
            \draw (5.8,7.2) node[anchor=north west] {\large$a_{_3}$};
            \draw (2.8,7.2) node[anchor=north west] {\large$a_{_2}$};
            \draw (-0.2,4.5) node[anchor=north west] {\large$b_{_1}$};
            \draw (12.8,4.5) node[anchor=north west] {\large$b_{_{m}}$};
            \draw (9.8,4.5) node[anchor=north west] {\large$b_{_{m-1}}$};      
            \draw (15.8,4.4) node[anchor=north west] {\large$b_{_{m+1}}$};
            \draw (5.8,4.5) node[anchor=north west] {\large$b_{_3}$};
            \draw (2.8,4.5) node[anchor=north west] {\large$b_{_2}$};
            \draw (-0.2,1.2) node[anchor=north west] {\large$c_{_1}$};
            \draw (12.8,1.2) node[anchor=north west] {\large$c_{_{m}}$};
            \draw (15.8,1.2) node[anchor=north west] {\large$c_{_{m+1}}$};
            \draw (9.8,1.2) node[anchor=north west] {\large$c_{_{m-1}}$};
            \draw (5.8,1.2) node[anchor=north west] {\large$c_{_3}$};
            \draw (2.8,1.2) node[anchor=north west] {\large$c_{_2}$};
            \draw [line width=1pt] (10,3)-- (13,3);
            \draw [line width=1pt] (10,0)-- (13,0);
            \draw [line width=1pt] (13,0)-- (16,0);
            \begin{scriptsize}
                \draw [fill=wwwwww] (0,0) circle (2pt);
                \draw [fill=wwwwww] (0,3) circle (2pt);
                \draw [fill=wwwwww] (0,6) circle (2pt);
                \draw [fill=wwwwww] (3,0) circle (2pt);
                \draw [fill=wwwwww] (3,3) circle (2pt);
                \draw [fill=wwwwww] (3,6) circle (2pt);
                \draw [fill=wwwwww] (6,0) circle (2pt);
                \draw [fill=wwwwww] (6,3) circle (2pt);
                \draw [fill=wwwwww] (6,6) circle (2pt);
                \draw [fill=wwwwww] (10,3) circle (2pt);
                \draw [fill=wwwwww] (10,6) circle (2pt);
                \draw [fill=wwwwww] (10,0) circle (2pt);
                \draw [fill=wwwwww] (13,0) circle (2pt);
                \draw [fill=wwwwww] (16,0) circle (2pt);
                \draw [fill=wwwwww] (16,3) circle (2pt);
                \draw [fill=wwwwww] (13,3) circle (2pt);
                \draw [fill=wwwwww] (13,6) circle (2pt);
            \end{scriptsize}
            \end{tikzpicture}
        \vspace{-2em}
        \caption{The graph $H_1$.}
        \label{fig: graph of G'(3×n)}
    \end{minipage}%
    ~
    \begin{minipage}{0.5\textwidth}
    \centering
        \begin{tikzpicture}[scale=0.9,line cap=round,line join=round,>=triangle 45,x=0.4cm,y=0.4cm]
        \clip(-1,-2) rectangle (21,7);
        \draw [line width=1pt] (0,0)-- (0,3);
        \draw [line width=1pt] (0,3)-- (0,6);
        \draw [line width=1pt] (3,0)-- (3,3);
        \draw [line width=1pt] (3,3)-- (3,6);
        \draw [line width=1pt] (6,0)-- (6,3);
        \draw [line width=1pt] (6,3)-- (6,6);
        \draw [line width=1pt] (6,3)-- (7,3);
        \draw [line width=1pt] (6,6)-- (7,6);
        \draw [line width=1pt] (6,0)-- (7,0);
        \draw [line width=1pt] (0,3)-- (6,3);
        \draw [line width=1pt] (0,0)-- (6,0);
        \draw [line width=1pt] (0,6)-- (6,6);
        \draw [line width=1pt,dash pattern=on 4pt off 4pt] (7,6)-- (9,6);
        \draw [line width=1pt,dash pattern=on 4pt off 4pt] (7,3)-- (9,3);
        \draw [line width=1pt,dash pattern=on 4pt off 4pt] (7,0)-- (9,0);
        \draw [line width=1pt] (9,3)-- (10,3);
        \draw [line width=1pt] (9,6)-- (10,6);
        \draw [line width=1pt] (9,0)-- (10,0);
        \draw [line width=1pt] (10,0)-- (10,3);
        \draw [line width=1pt] (10,3)-- (10,6);
        \draw [line width=1pt] (13,0)-- (13,3);
        \draw [line width=1pt] (13,3)-- (13,6);
        \draw [line width=1pt] (10,6)-- (13,6);
        \draw (-0.2,7.2) node[anchor=north west] {\large$a_{_1}$};
        \draw (12.8,7.2) node[anchor=north west] {\large$a_{_{m-1}}$};
        \draw (9.8,7.35) node[anchor=north west] {\large$a_{_{m-2}}$};
        \draw (5.8,7.35) node[anchor=north west] {\large$a_{_3}$};
        \draw (2.8,7.35) node[anchor=north west] {\large$a_{_2}$};
        \draw (-0.2,4.6) node[anchor=north west] {\large$b_{_1}$};
        \draw (12.8,4.5) node[anchor=north west] {\large$b_{_{m-1}}$};
        \draw (9.8,4.6) node[anchor=north west] {\large$b_{_{m-2}}$};
        \draw (5.8,4.6) node[anchor=north west] {\large$b_{_3}$};
        \draw (2.8,4.6) node[anchor=north west] {\large$b_{_2}$};
        \draw (-0.2,1.35) node[anchor=north west] {\large$c_{_1}$};
        \draw (12.8,1.35) node[anchor=north west] {\large$c_{_{m-1}}$};
        \draw (15.7,1.35) node[anchor=north west] {\large$c_{_{m}}$};
        \draw (18.6,1.25) node[anchor=north west] {\large$c_{_{m+1}}$};
        \draw (9.8,1.35) node[anchor=north west] {\large$c_{_{m-2}}$};
        \draw (5.8,1.35) node[anchor=north west] {\large$c_{_3}$};
        \draw (2.8,1.35) node[anchor=north west] {\large$c_{_2}$};
        \draw [line width=1pt] (10,3)-- (13,3);
        \draw [line width=1pt] (10,0)-- (13,0);
        
        \draw [line width=1pt] (13,0)-- (19,0);
        \begin{scriptsize}
        \draw [fill=wwwwww] (0,0) circle (2pt);
        \draw [fill=wwwwww] (0,3) circle (2pt);
        \draw [fill=wwwwww] (0,6) circle (2pt);
        \draw [fill=wwwwww] (3,0) circle (2pt);
        \draw [fill=wwwwww] (3,3) circle (2pt);
        \draw [fill=wwwwww] (3,6) circle (2pt);
        \draw [fill=wwwwww] (6,0) circle (2pt);
        \draw [fill=wwwwww] (6,3) circle (2pt);
        \draw [fill=wwwwww] (6,6) circle (2pt);
        \draw [fill=wwwwww] (10,3) circle (2pt);
        \draw [fill=wwwwww] (10,6) circle (2pt);
        \draw [fill=wwwwww] (10,0) circle (2pt);
        \draw [fill=wwwwww] (13,0) circle (2pt);
        \draw [fill=wwwwww] (16,0) circle (2pt);
        \draw [fill=wwwwww] (19,0) circle (2pt);
        \draw [fill=wwwwww] (13,3) circle (2pt);
        \draw [fill=wwwwww] (13,6) circle (2pt);
        \end{scriptsize}
        \end{tikzpicture}
        \vspace{-3em}
        \caption{The graph $H_2$.}
        \label{fig: graph of G(1)(3×n)}
    \end{minipage}   
\end{figure}

\begin{figure}[H]
    \begin{minipage}{0.5\textwidth}
    \centering
        \begin{tikzpicture}[scale=0.9, line cap=round,line join=round,>=triangle 45,x=0.4cm,y=0.4cm]
            \clip(-1,-2) rectangle (17.5,7);
            \draw [line width=1pt] (0,0)-- (0,3);
            \draw [line width=1pt] (0,3)-- (0,6);
            \draw [line width=1pt] (3,0)-- (3,3);
            \draw [line width=1pt] (3,3)-- (3,6);
            \draw [line width=1pt] (6,0)-- (6,3);
            \draw [line width=1pt] (6,3)-- (6,6);
            \draw [line width=1pt] (6,3)-- (7,3);
            \draw [line width=1pt] (6,6)-- (7,6);
            \draw [line width=1pt] (6,0)-- (7,0);
            \draw [line width=1pt] (0,3)-- (6,3);
            \draw [line width=1pt] (0,0)-- (6,0);
            \draw [line width=1pt] (0,6)-- (6,6);
            \draw [line width=1pt,dash pattern=on 4pt off 4pt] (7,6)-- (9,6);
            \draw [line width=1pt,dash pattern=on 4pt off 4pt] (7,3)-- (9,3);
            \draw [line width=1pt,dash pattern=on 4pt off 4pt] (7,0)-- (9,0);
            \draw [line width=1pt] (9,3)-- (10,3);
            \draw [line width=1pt] (9,6)-- (10,6);
            \draw [line width=1pt] (9,0)-- (10,0);
            \draw [line width=1pt] (10,0)-- (10,3);
            \draw [line width=1pt] (10,3)-- (10,6);
            \draw [line width=1pt] (13,0)-- (13,3);
            \draw [line width=1pt] (13,3)-- (13,6);
            \draw [line width=1pt] (10,6)-- (13,6);
            \draw (-0.3,7.3) node[anchor=north west] {\large$a_{_1}$};
            \draw (12.7,7.3) node[anchor=north west] {\large$a_{_{m-1}}$};
            \draw (9.7,7.3) node[anchor=north west] {\large$a_{_{m-2}}$};
            \draw (5.7,7.3) node[anchor=north west] {\large$a_{_3}$};
            \draw (2.7,7.3) node[anchor=north west] {\large$a_{_2}$};
            
            \draw (-0.2,4.6) node[anchor=north west] {\large$b_{_1}$};
            \draw (12.8,4.5) node[anchor=north west] {\large$b_{_{m-1}}$};
            \draw (9.8,4.6) node[anchor=north west] {\large$b_{_{m-2}}$};
            \draw (5.8,4.6) node[anchor=north west] {\large$b_{_3}$};
            \draw (2.8,4.6) node[anchor=north west] {\large$b_{_2}$};
            
            \draw (-0.2,1.3) node[anchor=north west] {\large$c_{_1}$};
            \draw (12.8,1.2) node[anchor=north west] {\large$c_{_{m-1}}$};
            \draw (15.8,7.3) node[anchor=north west] {\large$a_{_{m}}$};
            \draw (9.8,1.3) node[anchor=north west] {\large$c_{_{m-2}}$};
            \draw (5.8,1.3) node[anchor=north west] {\large$c_{_3}$};
            \draw (2.8,1.3) node[anchor=north west] {\large$c_{_2}$};
            \draw [line width=1pt] (10,3)-- (13,3);
            \draw [line width=1pt] (10,0)-- (13,0);
            \draw [line width=1pt] (13,6)-- (16,6);
            \begin{scriptsize}
                \draw [fill=wwwwww] (0,0) circle (2pt);
                \draw [fill=wwwwww] (0,3) circle (2pt);
                \draw [fill=wwwwww] (0,6) circle (2pt);
                \draw [fill=wwwwww] (3,0) circle (2pt);
                \draw [fill=wwwwww] (3,3) circle (2pt);
                \draw [fill=wwwwww] (3,6) circle (2pt);
                \draw [fill=wwwwww] (6,0) circle (2pt);
                \draw [fill=wwwwww] (6,3) circle (2pt);
                \draw [fill=wwwwww] (6,6) circle (2pt);
                \draw [fill=wwwwww] (10,3) circle (2pt);
                \draw [fill=wwwwww] (10,6) circle (2pt);
                \draw [fill=wwwwww] (10,0) circle (2pt);
                \draw [fill=wwwwww] (13,0) circle (2pt);
                \draw [fill=wwwwww] (16,6) circle (2pt);
                \draw [fill=wwwwww] (13,3) circle (2pt);
                \draw [fill=wwwwww] (13,6) circle (2pt);
            \end{scriptsize}
        \end{tikzpicture}
        \vspace{-2em}
        \caption{The graph $H_3$.}
        \label{fig: graph of H1}
    \end{minipage}%
    ~
    \begin{minipage}{0.45\textwidth}
    \centering
        \begin{tikzpicture}[scale=0.9, line cap=round,line join=round,>=triangle 45,x=0.4cm,y=0.4cm]
            \clip(-1,-2) rectangle (18.5,7);
            \draw [line width=1pt] (0,0)-- (0,3);
            \draw [line width=1pt] (0,3)-- (0,6);
            \draw [line width=1pt] (3,0)-- (3,3);
            \draw [line width=1pt] (3,3)-- (3,6);
            \draw [line width=1pt] (6,0)-- (6,3);
            \draw [line width=1pt] (6,3)-- (6,6);
            \draw [line width=1pt] (6,3)-- (7,3);
            \draw [line width=1pt] (6,6)-- (7,6);
            \draw [line width=1pt] (6,0)-- (7,0);
            \draw [line width=1pt] (0,3)-- (6,3);
            \draw [line width=1pt] (0,0)-- (6,0);
            \draw [line width=1pt] (0,6)-- (6,6);
            \draw [line width=1pt,dash pattern=on 4pt off 4pt] (7,6)-- (9,6);
            \draw [line width=1pt,dash pattern=on 4pt off 4pt] (7,3)-- (9,3);
            \draw [line width=1pt,dash pattern=on 4pt off 4pt] (7,0)-- (9,0);
            \draw [line width=1pt] (9,3)-- (10,3);
            \draw [line width=1pt] (9,6)-- (10,6);
            \draw [line width=1pt] (9,0)-- (10,0);
            \draw [line width=1pt] (10,0)-- (10,3);
            \draw [line width=1pt] (10,3)-- (10,6);
            \draw [line width=1pt] (13,0)-- (13,3);
            \draw [line width=1pt] (13,3)-- (13,6);
            \draw [line width=1pt] (10,6)-- (13,6);
            \draw (-0.3,7.3) node[anchor=north west] {\large$a_{_1}$};
            \draw (12.7,7.3) node[anchor=north west] {\large$a_{_{m-2}}$};
            \draw (9.7,7.3) node[anchor=north west] {\large$a_{_{m-3}}$};
            \draw (5.7,7.3) node[anchor=north west] {\large$a_{_3}$};
            \draw (2.7,7.3) node[anchor=north west] {\large$a_{_2}$};
            
            \draw (-0.2,4.6) node[anchor=north west] {\large$b_{_1}$};
            \draw (12.8,4.5) node[anchor=north west] {\large$b_{_{m-2}}$};
            \draw (9.8,4.6) node[anchor=north west] {\large$b_{_{m-3}}$};
            \draw (5.8,4.6) node[anchor=north west] {\large$b_{_3}$};
            \draw (2.8,4.6) node[anchor=north west] {\large$b_{_2}$};
            
            \draw (-0.2,1.3) node[anchor=north west] {\large$c_{_1}$};
            \draw (12.8,1.2) node[anchor=north west] {\large$c_{_{m-2}}$};
            \draw (15.8,7.3) node[anchor=north west] {\large$a_{_{m-1}}$};
            \draw (9.8,1.3) node[anchor=north west] {\large$c_{_{m-3}}$};
            \draw (5.8,1.3) node[anchor=north west] {\large$c_{_3}$};
            \draw (2.8,1.3) node[anchor=north west] {\large$c_{_2}$};
            \draw [line width=1pt] (10,3)-- (13,3);
            \draw [line width=1pt] (10,0)-- (13,0);
            \draw [line width=1pt] (13,6)-- (16,6);
            \begin{scriptsize}
                \draw [fill=wwwwww] (0,0) circle (2pt);
                \draw [fill=wwwwww] (0,3) circle (2pt);
                \draw [fill=wwwwww] (0,6) circle (2pt);
                \draw [fill=wwwwww] (3,0) circle (2pt);
                \draw [fill=wwwwww] (3,3) circle (2pt);
                \draw [fill=wwwwww] (3,6) circle (2pt);
                \draw [fill=wwwwww] (6,0) circle (2pt);
                \draw [fill=wwwwww] (6,3) circle (2pt);
                \draw [fill=wwwwww] (6,6) circle (2pt);
                \draw [fill=wwwwww] (10,3) circle (2pt);
                \draw [fill=wwwwww] (10,6) circle (2pt);
                \draw [fill=wwwwww] (10,0) circle (2pt);
                \draw [fill=wwwwww] (13,0) circle (2pt);
                \draw [fill=wwwwww] (16,6) circle (2pt);
                \draw [fill=wwwwww] (13,3) circle (2pt);
                \draw [fill=wwwwww] (13,6) circle (2pt);
            \end{scriptsize}
        \end{tikzpicture}
        \vspace{-2em}
        \caption{The graph $H_4$.}
        \label{fig: graph of H2}
    \end{minipage}
\end{figure}

The following result will be useful in determining the homotopy type of $\Delta_n^t$.

\begin{lemma}\label{lemma: del(a_n) = st(a_(n-1)) U  st(b_(n-1))}
    Let $\del\lno a_{n} \rno = \del_{\Delta_n^t}\lno a_{n} \rno$. Then
    \[
        \del\lno a_{n} \rno 
        = \st_{\Delta_n^t}\lno a_{n-1} \rno 
          \cup 
          \st_{\Delta_n^t}\lno b_{n-1} \rno.
    \]
\end{lemma}

We omit the proof, as it follows the same argument as in 
\Cref{lemma: del(b_n) = st(a_(n-1)) U  st(b_(n-1))}.

We now determine the homotopy type of two auxiliary complexes ($\Delta_2^t \lno H_2 \rno$ and $\Delta_2^t \lno H_3 \rno$), arising from the grid graph, using a Morse matching. Our approach follows the argument in \cite[Theorem~4.16]{bayer2024total}. For this, we impose the following ordering on the vertices of $H_2$ and $H_3$:

\begin{itemize}
    \item If $i < j$, then $a_i < a_j$, $a_i < b_j$, $a_i < c_j$, $b_i < b_j$, $b_i < c_j$, $c_i < c_j$;
    \item If $i = j$, then $a_i < b_j$, $a_i < c_j$, $b_i < b_j$.
\end{itemize}
 
\begin{lemma}\label{thm: homotopy type of total 2-cut complex of (3×m)(1)}
    For $m \geq 2$, the total 2-cut complex of $H_2$ is homotopy equivalent to wedge of $\lno 2m-4 \rno-$ number of $\lno 3m-5 \rno-$dimensional spheres, i.e., $$\Delta_2^t \lno H_2 \rno \simeq \bigvee\limits_{2m-4}\bbS^{3m-5}.$$
\end{lemma}

\begin{proof}
    In $H_2$, the only neighbors of $a_{1}$ are $b_{1}$ and $a_{2}$, and 
    $\lvert V \lno H_2 \rno \rvert = 3m-1$ 
    (see \Cref{fig: graph of G(1)(3×n)}). 
    We have the following observations:

    \begin{enumerate}[label=\arabic*.]
        \item The number of edges in $H_2$ is 
        \(
            3(m-2) + 2(m-1) + 2 = 5m - 6,
        \)
        obtained by counting the horizontal and vertical edges.

        \item For every facet $\sigma \in \Delta_2^t \lno H_2 \rno$,  
        the set $\sigma$ is exactly 
        \( V\lno H_2 \rno \setminus \{x, y\} \), 
        where $\{x, y\}$ is a non-edge (a $2$-element independent set). 
        Hence, the number of non-facets of 
        $\Delta_2^t \lno H_2 \rno$ equals the number of edges of $H_2$, 
        namely $5m - 6$. 
        These non-facets are precisely the sets 
        \( V\lno H_2 \rno \setminus \{e_1, e_2\} \), 
        where $e_1$ and $e_2$ are the endpoints of some edge $e$.
    \end{enumerate}
    
    We first define an element matching on the face poset of $\Delta_2^t \lno H_2 \rno$ using the vertex $c_{m+1}$, denoted $M_{c_{m+1}}$. 
    It is immediate that $M_{c_{m+1}}$ matches every $\sigma \in \Delta_2^t \lno H_2 \rno$ with $c_{m+1} \notin \sigma$ to the face 
    $\sigma \cup \{c_{m+1}\}$, provided the latter belongs to $\Delta_2^t \lno H_2 \rno$. 
    Since $H_2$ is triangle-free, every subset of $V\lno H_2 \rno$ of size $3m-4$ lies in $\Delta_2^t \lno H_2 \rno$. 
    Thus, the faces that remain unmatched under $M_{c_{m+1}}$ must have sizes $3m-3$ or $3m-4$.
    
    A face $\sigma$ is unmatched exactly when  
    $c_{m+1} \notin \sigma$ and 
    $\sigma \cup \{c_{m+1}\} \notin \Delta_2^t \lno H_2 \rno$. 
    The latter condition holds precisely when  
    \(
    \sigma \cup \{c_{m+1}\} 
        = V\lno H_2 \rno \setminus \{e_1, e_2\},
    \)
    where $e_1$ and $e_2$ are the endpoints of some edge $e$ in $H_2$. 
    Hence, the unmatched faces are of the following two types:

    \begin{enumerate}[label=\textbf{Type }\Roman*:, leftmargin=4em]
    \item Subsets of size $3m-4$, namely
    \[
        X_{e_1,e_2} 
            = V \setminus \{c_{m+1}, e_1, e_2\},
    \]
    where $\{e_1 < e_2\}$ is an edge not containing $c_{m+1}$. 
    We may assume $e_1 \le e_2 \le c_{m+1}$. 
    Since $H_2$ has $5m-6$ edges and exactly one of them contains $c_{m+1}$, 
    there are $5m - 7$ such faces.

    \item Facets of size $3m-3$, namely
    \[
        Y_a = V \setminus \{c_{m+1}, a\},
    \]
    where $\{c_{m+1}, a\}$ is an independent set (a non-edge), 
    i.e., $a \notin \{c_m, c_{m+1}\}$. 
    There are $3m - 3$ such faces.
    \end{enumerate}
    
    For each $a \notin \{c_{m}, c_{m+1}\}$, the vertex $a$ gives rise to two unmatched faces $A_{a,e_2}$ of the following types:
    
    \begin{enumerate}[label=\roman*., leftmargin=3em]
        \item When $e_2$ is the next \emph{vertical} vertex above $a$ in the same column,  
        i.e., $e_2 = b_i$ if $a = a_i$, and $e_2 = c_i$ if $a = b_i$,  
        except when $a = c_i$ for some $1 \le i \le m-1$  
        (so $a$ cannot lie in the bottom row).
    
        \item When $e_2$ is the next \emph{horizontal} vertex to the right of $a$,  
        i.e., $e_2 = a_{i+1}$ if $a = a_i$,  
        $e_2 = b_{i+1}$ if $a = b_i$,  
        and $e_2 = c_{i+1}$ if $a = c_i$,  
        except when $a \in \{a_{m-1}, b_{m-1}\}$.
    \end{enumerate}
    
    \noindent
    For such $a \notin \{c_m, c_{m+1}\}$, consider the element matching $M_a$.  
    We obtain
    \[
    \begin{cases}
        X_{a,a+1} \longrightarrow X_{a,a+1} \cup \{a\} = Y_{a+1}, 
            & a \neq c_i,\\[4pt]
        X_{a,a+3} \longrightarrow X_{a,a+3} \cup \{a\} = Y_{a+3}, 
            & a \notin \{a_{m-1},\, b_{m-1},\, c_{m-1}\}.
    \end{cases}
    \]

    Here, $a+1$ denotes the next vertical vertex, and $a+3$ denotes the next horizontal vertex in the next column to the right. 
For the case $a = c_{m-1}$, we use the matching
\[
    X_{c_{m-1}, c_{m}} \longrightarrow 
    X_{c_{m-1}, c_{m}} \cup \{c_{m}\} 
    = Y_{c_{m-1}}.
\]

Every Type II unmatched face is paired with a Type I unmatched face through the element matchings $M_a$ for all 
$a \in V_1$, where 
\( V_1 = V \setminus \{c_m, c_{m+1}\} \). 
The matching $\lno \bigcup\limits_{v \in V_1} M_{v} \rno \cup M_{c_{m+1}}$
is acyclic by \Cref{theorem 2}. 
Consequently, only the Type I unmatched faces remain, producing critical cells of dimension $3m-4$, together with one $0$-cell (coming from the empty face). 
The number of such critical cells is
\[
(5m-7) - (3m-3) = 2m - 4.
\]

By \Cref{Corollary 1}, we obtain
\[
    \Delta_2^t \lno H_2 \rno 
    \simeq 
    \bigvee_{2m-4} \bbS^{3m-5}.
\]

This completes the proof.
\end{proof}

\begin{lemma}\label{thm: homotopy type of total 2-cut complex of H1}
    For $m \geq 2$, the total 2-cut complex of $H_3$ is homotopy equivalent to a wedge of $\lno 2m-4 \rno-$ spheres of dimension $\lno 3m-6 \rno-$, i.e., $$\Delta_2^t \lno H_3 \rno \simeq \bigvee\limits_{2m-4}\bbS^{3m-6}.$$
\end{lemma}

We omit the proof of this result, as it follows the same sequence of element matchings and arguments used in the proof of \Cref{thm: homotopy type of total 2-cut complex of (3×m)(1)}. 
We now state the main result of this section.

\begin{theorem}\label{thm: homotopy of 3-cut complex of (3×n)-gg}
    For $n \geq 2$, the total 3-cut complex of $\threegg$, denoted $\Delta_n^t$, is homotopy equivalent to a wedge of $\binom{2n-2}{2}$ spheres of dimension $\lno 3n-6 \rno$, i.e., $$\Delta_n^t \simeq \bigvee\limits_{\binom{2n-2}{2}}\bbS^{3n-6}.$$
\end{theorem}

We prove this result by induction on $n$. 
Since $\Delta_2^t = \totalthreecut\lno \mathcal{G}_{2\times 3}\rno$, the base case $n=2$ follows from \Cref{thm: homotopy type of total k-cut complex of 2n-gg}. 
Assume that \Cref{thm: homotopy of 3-cut complex of (3×n)-gg} holds for all $n \le m$. 
We will show that it also holds for $n = m+1$. 
Before doing so, we discuss some important results that will be used in the proof of \Cref{thm: homotopy of 3-cut complex of (3×n)-gg} for the case $n = m+1$.

\begin{lemma}\label{claim: homotopy of total 3-cut of 3×m mthreeggp}
Let $\Delta_1'$ denote the deletion of $b_{m+1}$ in $\totalthreecut \lno H_1 \rno$, and let $\Delta_2' 
    = \st_{\Delta_1'}\lno b_m \rno \cap \st_{\Delta_1'}\lno c_m \rno.$
Then the following hold:
\begin{enumerate}
    \item $\Delta_2' = L_1 \cup L_2 \cup L_3$, where
    \begin{align*}
        L_1 &= \left\langle \{b_m, c_m, c_{m+1}\} \right\rangle 
              * \Delta_2^t \lno H_3 \rno,\\
        L_2 &= \left\langle \{b_{m-1}, c_{m-1}, a_m\} \right\rangle 
              * \Delta_1^t \lno H_4 \rno,\\
        L_3 &= \left\langle V\lno H_4 \rno \cup \{a_m\} \right\rangle;
    \end{align*}

    \item $L_2 \cap L_3 \subseteq L_1$.
\end{enumerate}
\end{lemma}

\begin{proof}
\begin{enumerate}
    \item We first show that 
    \(
        \Delta_2' \supseteq \bigcup_{i=1}^3 L_i.
    \)
    Let $\sigma \in \bigcup_{i=1}^3 L_i$. 
    If $\sigma \in L_1$, the conclusion is immediate. 
    If $\sigma \in L_2$ or $\sigma \in L_3$, then 
    $\sigma \cup \{b_m\},\, \sigma \cup \{c_m\} \in \Delta_1'$, 
    so $\sigma \in \Delta_2'$.

    Now let $\sigma \in \Delta_2'$, and assume without loss of generality that 
    $\sigma$ is a facet of $\Delta_2'$. 
    Then either 
    $\{b_m, c_m\} \subseteq \sigma$
    or 
    $\{b_m, c_m\} \not\subseteq \sigma$. 
    We consider these cases separately.

    \begin{enumerate}[label=\textbf{\Alph*.}, leftmargin=2.5em]
        \item Suppose $\{b_m, c_m\} \subseteq \sigma$, and let 
        \( S = \{x, y, z\} \) 
        be a $3$-element independent set contained in $\sigma^c$. 
        Clearly $b_{m+1} \in S$; otherwise we have two subcases:

        \begin{enumerate}[label=\roman*., leftmargin=2em]
            \item $c_{m+1} \in S$, say $x = c_{m+1}$;
            \item $c_{m+1} \notin S$.
        \end{enumerate}

        In either situation, define
        \[
            S_1 = S \cup \{b_{m+1}\} \setminus \{x\}.
        \]
        Since $\sigma \subseteq S_1^{\,c}$, we have 
        $\sigma \cup \{x\} \in \Delta_2'$, 
        contradicting the fact that $\sigma$ is a facet of $\Delta_2'$. 
        Hence $b_{m+1} \in S$, which forces 
        $c_{m+1} \in \sigma$, 
        so $\sigma \in L_1$.
        \item Suppose $\{b_m, c_m\} \not\subseteq \sigma$. 
        Assume $b_m \in \sigma$ and $c_m \notin \sigma$ 
        (the reverse situation is handled identically). 
        Since $\sigma \in \st_{\Delta_1'}\lno c_m \rno$ (because $\sigma \in \Delta_2'$), 
        we have 
        \(
            \sigma \cup \{c_m\} \in \Delta_1'.
        \)
        This implies 
        \(
            \sigma \cup \{c_m\} \in \Delta_2',
        \)
        because $b_m \in \sigma \cup \{c_m\}$; 
        hence $\sigma$ cannot be a facet of $\Delta_2'$, a contradiction. 

        By a similar argument, the case 
        $b_m \notin \sigma$ and $c_m \in \sigma$ 
        also leads to a contradiction. 
        Therefore, the only remaining possibility when 
        $\{b_m, c_m\} \not\subseteq \sigma$ 
        is that 
        \(
            \sigma \cap \{b_m, c_m\} = \emptyset.
        \)

    We now claim the following.

    \begin{claim}\label{claim: c_(m+1) in sigma iff sigma in L1}
        If $\sigma$ is a facet of $\Delta_2'$, then 
        $c_{m+1} \in \sigma$ if and only if $\sigma \in L_1$.
    \end{claim}

    \begin{proof}[Proof of \Cref{claim: c_(m+1) in sigma iff sigma in L1}]\renewcommand{\qedsymbol}{}
        The direction $\sigma \in L_1 \Rightarrow c_{m+1} \in \sigma$ follows directly from the definition of $L_1$. 
        For the reverse implication, we argue via the contrapositive:  
        if $\sigma \notin L_1$, then $c_{m+1} \in \sigma^c$.

        Suppose, for contradiction, that $\sigma \notin L_1$ and yet $c_{m+1} \in \sigma$. 
        Since $\sigma \in \st_{\Delta_1'}\lno b_m \rno$, the complement 
        $\bigl( \sigma \cup \{b_m\} \bigr)^c$ 
        contains a $3$-element independent set 
        \( S_2 = \{x_1, x_2, x_3\} \).  
        Because $\sigma$ is a facet of $\Delta_2'$, we must have 
        \( b_{m+1} \in S_2 \), so write \( x_3 = b_{m+1} \).

        If $c_m \notin S_2$, then 
        \(
            \bigl( \sigma \cup \{b_m, c_m\} \bigr)^c
        \)
        still contains $S_2$, implying 
        \( \sigma \cup \{b_m, c_m\} \in \Delta_2' \), 
        contradicting the maximality of $\sigma$.  
        Hence $c_m \in S_2$, say \( x_2 = c_m \), so
        \[
            S_2 = \{x_1,\, c_m,\, b_{m+1}\}.
        \]

    Now, since $\sigma \in \st_{\Delta_1'}\lno c_m \rno$, the complement 
    $\bigl( \sigma \cup \{c_m\} \bigr)^c$ contains a $3$-element independent set, say 
    \( S_3 = \{y_1, y_2, y_3\} \).  
    Because $\sigma \notin L_1$, we must have $b_m \in S_3$, so let $y_3 = b_m$.  
    Together with the assumption that $c_{m+1} \in \sigma$, this forces
    \[
        \{y_1, y_2\} \cap \{b_m, c_m, b_{m+1}, c_{m+1}\} = \emptyset.
    \]
    In this situation, the set 
    \( \{y_1, y_2, b_{m+1}\} \) 
    is a $3$-element independent set in 
    \( \bigl( \sigma \cup \{b_m, c_m\} \bigr)^c \).  
    Hence 
    \( \sigma \cup \{b_m, c_m\} \in \Delta_2' \), 
    contradicting the assumption that $\sigma$ is a facet of $\Delta_2'$.

    This completes the proof of 
    \Cref{claim: c_(m+1) in sigma iff sigma in L1}.
\end{proof}

    Now let $\sigma \in \Delta_2'$ with $\sigma \notin L_1$.  
    The complement 
    \(
        (\sigma \cup \{b_m\})^{c}
    \) 
    contains the vertices $\{c_m, b_{m+1}\}$, and  
    \(
        (\sigma \cup \{c_m\})^{c}
    \) 
    contains the vertices $\{b_m, c_{m+1}\}$.  
    It is easy to see that these two sets require at most two additional vertices, say $x$ and $y$, to form a $3$-element independent set,     \(
        \{x, c_m, b_{m+1}\}
    \) 
    and  
    \(
        \{y, b_m, c_{m+1}\}.
    \)

    One checks that $x = y$ works exactly when 
    \( x \in V\lno H_4 \rno \);  
    in this situation,  
    \( \sigma \in L_2 \).  
    If $x = y$ does not work, then  
    \( \{x, y\} \subseteq \{a_m, b_{m-1}, c_{m-1}\} \),  
    and the vertices $x$ and $y$ must be adjacent because  
    \( \sigma \notin L_1 \).  
    The only such adjacent pair is  
    \( (b_{m-1}, c_{m-1}) \),  
    in which case  
    \( \sigma \in L_3 \).
\end{enumerate}

    \item It is easy to verify that
    \[
        L_2 \cap L_3 
            = \Delta_1^t \lno H_4 \rno * \left\langle \{a_m\} \right\rangle
            \subseteq \Delta_2^t \lno H_3 \rno 
            \subseteq L_1.
    \]
\end{enumerate}
This completes the proof of 
\Cref{claim: homotopy of total 3-cut of 3×m mthreeggp}.
\end{proof}

\begin{lemma}\label{lemma: homotopy of total 3-cut of (3×n)gg'}
    For $m\geq 2$, let $\Delta_m^t \simeq \bigvee\limits_{\binom{2m-2}{2}} \bbS^{3m-6}$. Then, the total 3-cut complex of $H_1$ is homotopy equivalent to a wedge of $\binom{2m-1}{2}$ spheres of dimension $\lno 3m-4 \rno$, i.e., $$\totalthreecut \lno H_1 \rno \simeq \bigvee\limits_{\binom{2m-1}{2}}\bbS^{3m-4}.$$
\end{lemma}

\begin{proof}
    We begin by computing the homotopy types of the link and the deletion of the vertex $b_{m+1}$ in $\totalthreecut \lno H_1 \rno$.

    \medskip

    \noindent\textbf{Link of $b_{m+1}$ in $\totalthreecut \lno H_1 \rno$:}
    Let $H_5 = \mthreeggn \lsq  V \lno \mthreeggn \rno \setminus \lcu a_{m+1},b_{m+1}\rcu \rsq$ (see \Cref{fig: graph of H5}).

    \begin{figure}[h!]
        \centering
        \begin{tikzpicture}[scale=0.9, line cap=round,line join=round,>=triangle 45,x=0.4cm,y=0.4cm]
            \clip(-1,-2) rectangle (18.5,7);
            \draw [line width=1pt] (0,0)-- (0,3);
            \draw [line width=1pt] (0,3)-- (0,6);
            \draw [line width=1pt] (3,0)-- (3,3);
            \draw [line width=1pt] (3,3)-- (3,6);
            \draw [line width=1pt] (6,0)-- (6,3);
            \draw [line width=1pt] (6,3)-- (6,6);
            \draw [line width=1pt] (6,3)-- (7,3);
            \draw [line width=1pt] (6,6)-- (7,6);
            \draw [line width=1pt] (6,0)-- (7,0);
            \draw [line width=1pt] (0,3)-- (6,3);
            \draw [line width=1pt] (0,0)-- (6,0);
            \draw [line width=1pt] (0,6)-- (6,6);
            \draw [line width=1pt,dash pattern=on 4pt off 4pt] (7,6)-- (9,6);
            \draw [line width=1pt,dash pattern=on 4pt off 4pt] (7,3)-- (9,3);
            \draw [line width=1pt,dash pattern=on 4pt off 4pt] (7,0)-- (9,0);
            \draw [line width=1pt] (9,3)-- (10,3);
            \draw [line width=1pt] (9,6)-- (10,6);
            \draw [line width=1pt] (9,0)-- (10,0);
            \draw [line width=1pt] (10,0)-- (10,3);
            \draw [line width=1pt] (10,3)-- (10,6);
            \draw [line width=1pt] (13,0)-- (13,3);
            \draw [line width=1pt] (13,3)-- (13,6);
            \draw [line width=1pt] (10,6)-- (13,6);
            \draw (-0.3,7.3) node[anchor=north west] {\large$a_{_1}$};
            \draw (12.8,7.2) node[anchor=north west] {\large$a_{_{m}}$};
            \draw (9.7,7.35) node[anchor=north west] {\large$a_{_{m-1}}$};
            \draw (5.7,7.3) node[anchor=north west] {\large$a_{_3}$};
            \draw (2.7,7.3) node[anchor=north west] {\large$a_{_2}$};
            \draw (-0.2,4.6) node[anchor=north west] {\large$b_{_1}$};
            \draw (12.8,4.5) node[anchor=north west] {\large$b_{_{m}}$};
            \draw (9.7,4.65) node[anchor=north west] {\large$b_{_{m-1}}$};
            \draw (5.8,4.6) node[anchor=north west] {\large$b_{_3}$};
            \draw (2.8,4.6) node[anchor=north west] {\large$b_{_2}$};
            \draw (-0.2,1.3) node[anchor=north west] {\large$c_{_1}$};
            \draw (12.8,1.35) node[anchor=north west] {\large$c_{_{m}}$};
            \draw (15.8,1.3) node[anchor=north west] {\large$c_{_{m+1}}$};
            \draw (9.8,1.35) node[anchor=north west] {\large$c_{_{m-1}}$};
            \draw (5.8,1.3) node[anchor=north west] {\large$c_{_3}$};
            \draw (2.8,1.3) node[anchor=north west] {\large$c_{_2}$};
            \draw [line width=1pt] (10,3)-- (13,3);
            \draw [line width=1pt] (10,0)-- (13,0);
            
            \draw [line width=1pt] (13,0)-- (16,0);
            \begin{scriptsize}
                \draw [fill=wwwwww] (0,0) circle (2pt);
                \draw [fill=wwwwww] (0,3) circle (2pt);
                \draw [fill=wwwwww] (0,6) circle (2pt);
                \draw [fill=wwwwww] (3,0) circle (2pt);
                \draw [fill=wwwwww] (3,3) circle (2pt);
                \draw [fill=wwwwww] (3,6) circle (2pt);
                \draw [fill=wwwwww] (6,0) circle (2pt);
                \draw [fill=wwwwww] (6,3) circle (2pt);
                \draw [fill=wwwwww] (6,6) circle (2pt);
                \draw [fill=wwwwww] (10,3) circle (2pt);
                \draw [fill=wwwwww] (10,6) circle (2pt);
                \draw [fill=wwwwww] (10,0) circle (2pt);
                \draw [fill=wwwwww] (13,0) circle (2pt);
                \draw [fill=wwwwww] (16,0) circle (2pt);
                \draw [fill=wwwwww] (13,3) circle (2pt);
                \draw [fill=wwwwww] (13,6) circle (2pt);
            \end{scriptsize}
        \end{tikzpicture}
        \vspace{-2em}
        \caption{The graph $H_5$.}
        \label{fig: graph of H5}
    \end{figure}

        Using \Cref{coro:link of a vertex} and \Cref{thm: suspension of previous graph}, we obtain
    \begin{align*}
        \lk_{\totalthreecut \lno H_1 \rno}\lno b_{m+1} \rno 
            &= \Delta_3^t \lno H_5 \rno \\
            &= \Sigma \bigl( \totalthreecut\lno \mthreegg \rno \bigr).
    \end{align*}

    Since 
    \(
        \totalthreecut \lno \mthreegg \rno 
        \simeq 
        \bigvee_{\binom{2m-2}{2}} \bbS^{3m-6},
    \)
    we conclude that
    \begin{equation}\label{link of b_(m+1)}
        \lk_{\totalthreecut \lno H_1 \rno}\lno b_{m+1} \rno
        \simeq 
        \bigvee_{\binom{2m-2}{2}} \bbS^{3m-5}.
    \end{equation}

\noindent\textbf{Deletion of $b_{m+1}$ in $\totalthreecut \lno H_1 \rno$:}
Let $\Delta_1'$ denote the deletion of $b_{m+1}$ in $\totalthreecut \lno H_1 \rno$. 
Using arguments analogous to those in the proof of 
\Cref{lemma: del(b_n) = st(a_(n-1)) U  st(b_(n-1))}, we obtain
\[
    \Delta_1' 
        = \st_{\Delta_1'}\lno b_m \rno 
          \cup 
          \st_{\Delta_1'}\lno c_m \rno.
\]

By \Cref{lemma: bjorner's lemma 10.4(b)}, this gives
\[
    \Delta_1' 
        \simeq 
        \Sigma \bigl( 
            \st_{\Delta_1'}\lno b_m \rno 
            \cap 
            \st_{\Delta_1'}\lno c_m \rno 
        \bigr).
\]

Thus, it remains to compute the homotopy type of  
\(
    \st_{\Delta_1'}\lno b_m \rno 
    \cap 
    \st_{\Delta_1'}\lno c_m \rno.
\)

Let 
\[
    \Delta_2' 
        = \st_{\Delta_1'}\lno b_m \rno 
          \cap 
          \st_{\Delta_1'}\lno c_m \rno.
\]

The complexes $L_1, L_2,$ and \(L_3\) defined in 
\Cref{claim: homotopy of total 3-cut of 3×m mthreeggp} 
are each contractible.  
Hence, by applying 
\Cref{claim: homotopy of total 3-cut of 3×m mthreeggp} together with 
\Cref{lemma: bjorner's lemma 10.4(b)}, we obtain
\[
    \Delta_2' 
        \simeq 
        \Sigma(L_1 \cap L_2)\ 
        \vee\ 
        \Sigma(L_1 \cap L_3).
\]

To determine the homotopy type of $\Delta_2'$, it therefore suffices to compute the homotopy types of 
\(L_1 \cap L_2\) and \(L_1 \cap L_3\).  
We note that
\begin{align*}
    L_1 \cap L_2 
        &= 
        \Delta_2^t \lno H_3 \rno 
           \cap 
           \Bigl(
               \left\langle \{b_{m-1}, c_{m-1}, a_m\} \right\rangle 
               * 
               \Delta_1^t \lno H_4 \rno
           \Bigr) \\
        &= 
        \Delta_2^t \lno H_3 \rno \setminus A_1.
\end{align*}

Here, $A_1$ consists of all subsets of 
$\{a_m, b_{m-1}, c_{m-1}\}$ 
that contain a $2$-element independent set and whose complement is a face of 
$\Delta_2^t \lno H_3 \rno$, namely
\[
    A_1 
    = 
    \bigl\{
        \{b_{m-1}, a_m\},\ 
        \{c_{m-1}, a_m\},\ 
        \{b_{m-1}, c_{m-1}, a_m\}
    \bigr\}.
\]

Thus, determining the homotopy type of 
$L_1 \cap L_2$ reduces to determining the homotopy type of 
$\Delta_2^t \lno H_3 \rno \setminus A_1$.  
From \Cref{thm: homotopy type of total 2-cut complex of H1}, we have
\[
    \Delta_2^t \lno H_3 \rno 
        \simeq 
        \bigvee_{2m-4} \bbS^{3m-6}.
\]

Recall that the proof of 
\Cref{thm: homotopy type of total 2-cut complex of H1} 
(similar to the proof of 
\Cref{thm: homotopy type of total 2-cut complex of (3×m)(1)}) 
was carried out using a Morse matching.  
To compute the homotopy type of 
$\Delta_2^t \lno H_3 \rno \setminus A_1$, 
we use the same matching scheme, except that we now remove those faces of 
$\Delta_2^t \lno H_3 \rno$ whose complements lie in $A_1$.

Following the sequence of element matchings described in 
\Cref{thm: homotopy type of total 2-cut complex of H1}, 
the face $\{b_{m-1}, c_{m-1}, a_m\}^c$ is matched with 
$\{c_{m-1}, a_m\}^c$ via the element matching $M_{b_{m-1}}$.  
Similarly, the face $\{b_{m-1}, a_m\}^c$ is matched with 
$\{a_{m-1}, b_{m-1}, a_m\}^c$ via the element matching $M_{a_{m-1}}$.  

In the complex $\Delta_2^t \lno H_3 \rno \setminus A_1$, 
the face $\{b_{m-1}, a_m\}^c$ is removed, and therefore 
$\{a_{m-1}, b_{m-1}, a_m\}^c$ remains unmatched.  
This contributes one additional unmatched face.  
Hence, the total number of unmatched faces is 
\[
    (2m-4) + 1 = 2m-3,
\]
each of cardinality $3m-5$.  
Thus,
\[
    L_1 \cap L_2 
        \simeq 
        \bigvee_{2m-3} \bbS^{3m-6}.
\]

Next, consider $L_1 \cap L_3$.  We observe that
\begin{align*}
    L_1 \cap L_3 
        &= \Delta_2^t \lno H_3 \rno 
           \cap 
           \left\langle V\lno H_4 \rno \cup \{a_m\} \right\rangle \\
        &= \Delta_1^t \lno H_6 \rno,
\end{align*}
where 
\[
    H_6 
    = \mthreegg \!\left[ 
        V\lno \mthreegg \rno 
        \setminus \{b_{m-1}, c_{m-1}, b_m, c_m\}
      \right]
\]
(see \Cref{fig: graph of H3}).  
Thus, computing the homotopy type of $L_1 \cap L_3$ reduces to computing the homotopy type of 
$\Delta_1^t \lno H_6 \rno$, which is precisely the boundary of a simplex on the vertex set 
$V\lno H_6 \rno$.  
Hence,
\[
    L_1 \cap L_3 \simeq \bbS^{3m-6}.
\]

        \begin{figure}[H]
    \centering
    \begin{tikzpicture}[line cap=round,line join=round,>=triangle 45,x=0.4cm,y=0.4cm]
            \clip(-1,-2) rectangle (20.5,7);
            \draw [line width=1pt] (0,0)-- (0,3);
            \draw [line width=1pt] (0,3)-- (0,6);
            \draw [line width=1pt] (3,0)-- (3,3);
            \draw [line width=1pt] (3,3)-- (3,6);
            \draw [line width=1pt] (6,0)-- (6,3);
            \draw [line width=1pt] (6,3)-- (6,6);
            \draw [line width=1pt] (6,3)-- (7,3);
            \draw [line width=1pt] (6,6)-- (7,6);
            \draw [line width=1pt] (6,0)-- (7,0);
            \draw [line width=1pt] (0,3)-- (6,3);
            \draw [line width=1pt] (0,0)-- (6,0);
            \draw [line width=1pt] (0,6)-- (6,6);
            \draw [line width=1pt,dash pattern=on 4pt off 4pt] (7,6)-- (9,6);
            \draw [line width=1pt,dash pattern=on 4pt off 4pt] (7,3)-- (9,3);
            \draw [line width=1pt,dash pattern=on 4pt off 4pt] (7,0)-- (9,0);
            \draw [line width=1pt] (9,3)-- (10,3);
            \draw [line width=1pt] (9,6)-- (10,6);
            \draw [line width=1pt] (9,0)-- (10,0);
            \draw [line width=1pt] (10,0)-- (10,3);
            \draw [line width=1pt] (10,3)-- (10,6);
            \draw [line width=1pt] (13,0)-- (13,3);
            \draw [line width=1pt] (13,3)-- (13,6);
            \draw [line width=1pt] (10,6)-- (13,6);
            \draw [line width=1pt] (16,6)-- (19,6);
            
            \draw (-0.3,7.2) node[anchor=north west] {\large$a_{_1}$};
            \draw (12.7,7.2) node[anchor=north west] {\large$a_{_{m-2}}$};
            \draw (9.7,7.2) node[anchor=north west] {\large$a_{_{m-3}}$};
            \draw (5.7,7.2) node[anchor=north west] {\large$a_{_3}$};
            \draw (2.7,7.2) node[anchor=north west] {\large$a_{_2}$};
            \draw (15.8,7.2) node[anchor=north west] {\large$a_{_{m-1}}$};
            \draw (18.8,7.1) node[anchor=north west] {\large$a_{_{m}}$};
            
            \draw (-0.2,4.5) node[anchor=north west] {\large$b_{_1}$};
            \draw (12.8,4.4) node[anchor=north west] {\large$b_{_{m-2}}$};
            \draw (9.8,4.5) node[anchor=north west] {\large$b_{_{m-3}}$};
            \draw (5.8,4.5) node[anchor=north west] {\large$b_{_3}$};
            \draw (2.8,4.5) node[anchor=north west] {\large$b_{_2}$};
            
            \draw (-0.2,1.2) node[anchor=north west] {\large$c_{_1}$};
            \draw (12.8,1.1) node[anchor=north west] {\large$c_{_{m-2}}$};
            \draw (9.8,1.2) node[anchor=north west] {\large$c_{_{m-3}}$};
            \draw (5.8,1.2) node[anchor=north west] {\large$c_{_3}$};
            \draw (2.8,1.2) node[anchor=north west] {\large$c_{_2}$};
            \draw [line width=1pt] (10,3)-- (13,3);
            \draw [line width=1pt] (10,0)-- (13,0);
            \draw [line width=1pt] (13,6)-- (16,6);
            \begin{scriptsize}
                \draw [fill=wwwwww] (0,0) circle (2pt);
                \draw [fill=wwwwww] (0,3) circle (2pt);
                \draw [fill=wwwwww] (0,6) circle (2pt);
                \draw [fill=wwwwww] (3,0) circle (2pt);
                \draw [fill=wwwwww] (3,3) circle (2pt);
                \draw [fill=wwwwww] (3,6) circle (2pt);
                \draw [fill=wwwwww] (6,0) circle (2pt);
                \draw [fill=wwwwww] (6,3) circle (2pt);
                \draw [fill=wwwwww] (6,6) circle (2pt);
                \draw [fill=wwwwww] (10,3) circle (2pt);
                \draw [fill=wwwwww] (10,6) circle (2pt);
                \draw [fill=wwwwww] (10,0) circle (2pt);
                \draw [fill=wwwwww] (13,0) circle (2pt);
                \draw [fill=wwwwww] (16,6) circle (2pt);
                \draw [fill=wwwwww] (13,3) circle (2pt);
                \draw [fill=wwwwww] (13,6) circle (2pt);
                \draw [fill=wwwwww] (19,6) circle (2pt);
            \end{scriptsize}
        \end{tikzpicture}
        \vspace{-2em}
        \caption{The graph $H_6$.}
        \label{fig: graph of H3}
    \end{figure}

Therefore,
\[
    \Delta_2' 
    = 
    \st_{\Delta_1'}\lno b_m \rno 
      \cap 
      \st_{\Delta_1'}\lno c_m \rno
    \simeq 
    \bigvee_{2m-2} \bbS^{3m-5}.
\]
Hence,
\begin{equation}\label{deletion of b_(n+1)}
    \Delta_1' 
    = 
    \del_{\totalthreecut \lno H_1 \rno}\lno b_{m+1} \rno
    \simeq 
    \bigvee_{2m-2} \bbS^{3m-4}.
\end{equation}

From \Cref{link of b_(m+1)}, the link of $b_{m+1}$ is homotopy equivalent to a wedge of spheres of dimension $(3m-5)$, while \Cref{deletion of b_(n+1)} shows that the deletion of $b_{m+1}$ is homotopy equivalent to a wedge of spheres of dimension $(3m-4)$.  
Thus, $\lk_{\totalthreecut \lno H_1 \rno}\lno b_{m+1} \rno$ is contractible inside $\del_{\totalthreecut \lno H_1 \rno}\lno b_{m+1} \rno,$ 
and applying \Cref{lemma: bjorner's lemma 10.4(b)} yields
\begin{align*}
    \totalthreecut \lno H_1 \rno 
        &\simeq 
        \del_{\totalthreecut \lno H_1 \rno}\lno b_{m+1} \rno 
        \vee 
        \Sigma \lno 
            \lk_{\totalthreecut \lno H_1 \rno}\lno b_{m+1} \rno
        \rno \\
        &\simeq 
        \left( \bigvee_{2m-2} \bbS^{3m-4} \right)
        \vee 
        \Sigma \left( \bigvee_{\binom{2m-2}{2}} \bbS^{3m-5} \right) \\
        &\simeq 
        \left( \bigvee_{\binom{2m-2}{1}}\bbS^{3m-4} \right)
        \vee 
        \left( \bigvee_{\binom{2m-2}{2}}\bbS^{3m-4} \right) \\
        &\simeq 
        \bigvee_{\binom{2m-1}{2}} \bbS^{3m-4}.
\end{align*}

This completes the proof of 
\Cref{lemma: homotopy of total 3-cut of (3×n)gg'}.
\end{proof}

Before stating the next lemma, we recall the graph $H_2$ (see \Cref{fig: graph of G(1)(3×n)}). 
We also introduce the graph $H_7$ (see \Cref{fig: graph of H7}), defined by
\[
    H_7 
    = 
    \mathcal{G}_{3 \times (m-1)}
    \bigl[
        V\lno \mathcal{G}_{3 \times (m-1)} \rno 
        \setminus 
        \{a_{m-1}, b_{m-1}\}
    \bigr].
\]

\begin{figure}[h!]
        \centering
        \begin{tikzpicture}[scale=0.9, line cap=round,line join=round,>=triangle 45,x=0.4cm,y=0.4cm]
            \clip(-1,-2) rectangle (18.5,7);
            \draw [line width=1pt] (0,0)-- (0,3);
            \draw [line width=1pt] (0,3)-- (0,6);
            \draw [line width=1pt] (3,0)-- (3,3);
            \draw [line width=1pt] (3,3)-- (3,6);
            \draw [line width=1pt] (6,0)-- (6,3);
            \draw [line width=1pt] (6,3)-- (6,6);
            \draw [line width=1pt] (6,3)-- (7,3);
            \draw [line width=1pt] (6,6)-- (7,6);
            \draw [line width=1pt] (6,0)-- (7,0);
            \draw [line width=1pt] (0,3)-- (6,3);
            \draw [line width=1pt] (0,0)-- (6,0);
            \draw [line width=1pt] (0,6)-- (6,6);
            \draw [line width=1pt,dash pattern=on 4pt off 4pt] (7,6)-- (9,6);
            \draw [line width=1pt,dash pattern=on 4pt off 4pt] (7,3)-- (9,3);
            \draw [line width=1pt,dash pattern=on 4pt off 4pt] (7,0)-- (9,0);
            \draw [line width=1pt] (9,3)-- (10,3);
            \draw [line width=1pt] (9,6)-- (10,6);
            \draw [line width=1pt] (9,0)-- (10,0);
            \draw [line width=1pt] (10,0)-- (10,3);
            \draw [line width=1pt] (10,3)-- (10,6);
            \draw [line width=1pt] (13,0)-- (13,3);
            \draw [line width=1pt] (13,3)-- (13,6);
            \draw [line width=1pt] (10,6)-- (13,6);
            \draw (-0.3,7.3) node[anchor=north west] {\large$a_{_1}$};
            \draw (12.8,7.2) node[anchor=north west] {\large$a_{_{m - 2}}$};
            \draw (9.7,7.35) node[anchor=north west] {\large$a_{_{m-3}}$};
            \draw (5.7,7.3) node[anchor=north west] {\large$a_{_3}$};
            \draw (2.7,7.3) node[anchor=north west] {\large$a_{_2}$};
            \draw (-0.2,4.6) node[anchor=north west] {\large$b_{_1}$};
            \draw (12.8,4.5) node[anchor=north west] {\large$b_{_{m-2}}$};
            \draw (9.75  ,4.65) node[anchor=north west] {\large$b_{_{m-3}}$};
            \draw (5.8,4.6) node[anchor=north west] {\large$b_{_3}$};
            \draw (2.8,4.6) node[anchor=north west] {\large$b_{_2}$};
            \draw (-0.2,1.3) node[anchor=north west] {\large$c_{_1}$};
            \draw (12.8,1.35) node[anchor=north west] {\large$c_{_{m-2}}$};
            \draw (15.8,1.3) node[anchor=north west] {\large$c_{_{m-1}}$};
            \draw (9.8,1.35) node[anchor=north west] {\large$c_{_{m-3}}$};
            \draw (5.8,1.3) node[anchor=north west] {\large$c_{_3}$};
            \draw (2.8,1.3) node[anchor=north west] {\large$c_{_2}$};
            \draw [line width=1pt] (10,3)-- (13,3);
            \draw [line width=1pt] (10,0)-- (13,0);
            
            \draw [line width=1pt] (13,0)-- (16,0);
            \begin{scriptsize}
                \draw [fill=wwwwww] (0,0) circle (2pt);
                \draw [fill=wwwwww] (0,3) circle (2pt);
                \draw [fill=wwwwww] (0,6) circle (2pt);
                \draw [fill=wwwwww] (3,0) circle (2pt);
                \draw [fill=wwwwww] (3,3) circle (2pt);
                \draw [fill=wwwwww] (3,6) circle (2pt);
                \draw [fill=wwwwww] (6,0) circle (2pt);
                \draw [fill=wwwwww] (6,3) circle (2pt);
                \draw [fill=wwwwww] (6,6) circle (2pt);
                \draw [fill=wwwwww] (10,3) circle (2pt);
                \draw [fill=wwwwww] (10,6) circle (2pt);
                \draw [fill=wwwwww] (10,0) circle (2pt);
                \draw [fill=wwwwww] (13,0) circle (2pt);
                \draw [fill=wwwwww] (16,0) circle (2pt);
                \draw [fill=wwwwww] (13,3) circle (2pt);
                \draw [fill=wwwwww] (13,6) circle (2pt);
            \end{scriptsize}
        \end{tikzpicture}
        \vspace{-2em}
        \caption{The graph $H_7$.}
        \label{fig: graph of H7}
    \end{figure}

\begin{lemma}\label{claim: 1 (homotopy of total 3-cut of 3×n gg)}
    Let $\Delta_1$ denote the deletion of $a_{m+1}$ in $\Delta_{m+1}^t$, and $\Delta_2 = \st_{\Delta_1}\lno a_m \rno \cap \st_{\Delta_1}\lno b_m \rno$. Then,
        \begin{enumerate}
            \item  $\Delta_2 = K_1 \cup K_2 \cup K_3 \cup K_4$, where
            \begin{align*}
                K_1 &= \left\langle \lcu a_m, b_m, b_{m+1} \rcu \right\rangle * \Delta_2^t \lno H_2 \rno,\\
                K_2 &= \left\langle \lcu a_{m-1}, b_{m-1}, c_m, c_{m+1} \rcu \right\rangle * \Delta_1^t \lno H_7 \rno,\\
                K_3 &= \left\langle V\lno H_7 \rno \right\rangle * \left\langle \lcu a_{m-1}, b_{m-1} \rcu \right\rangle,\\
                K_4 &= \left\langle V\lno H_7 \rno \right\rangle * \left\langle \lcu c_{m}, c_{m+1} \rcu \right\rangle;
            \end{align*}
             
            \item $K_i \cap K_j \subseteq K_1$, where $2\leq i < j \leq 4$.
        \end{enumerate}
\end{lemma}    
    
\begin{proof}
    \begin{enumerate}
        \item We first show that $\Delta_{2} \supseteq \bigcup\limits_{i=1}^4{K_i}$. 
        Let $\sigma \in \bigcup\limits_{i=1}^4{K_i}$. If $\sigma \in K_1$, then the result is immediate. If $\sigma \in K_i$ with $i\neq1$, then $\sigma \cup \lcu a_m \rcu\in \Delta_1$ and $\sigma \cup \lcu b_m \rcu \in \Delta_1$, hence $\sigma \in \Delta_2$.

        To show that $\Delta_{2} \subseteq \bigcup\limits_{i=1}^4{K_i}$, let $\sigma \in \Delta_2$, and WLOG, assume that $\sigma$ is a facet of $\Delta_2$. Then, either $\lcu a_m, b_m\rcu \subseteq \sigma$ or $\lcu a_m, b_m\rcu \not\subseteq \sigma$. We analyse these two cases separately below.

        \begin{enumerate}[label = \Alph*.]
    \item Let $\lcu a_m, b_m \rcu \subseteq \sigma$, and let $S = \lcu x, y, z \rcu$ be an independent set of size $3$ contained in $\sigma^c$. Clearly $a_{m+1} \in S$. If $a_{m+1} \notin S$, consider the following two sub-cases:
    \begin{enumerate}[label = \roman*.]
        \item $b_{m+1} \in S$, say $x = b_{m+1}$.
        \item $b_{m+1} \notin S$.
    \end{enumerate}

    In both sub-cases, construct $S_1 = S \cup \lcu a_{m+1} \rcu \setminus \lcu x \rcu.$ 
    Since $\sigma \subseteq S_1^c$, it follows that $\sigma \cup \lcu x \rcu \in \Delta_2$, which contradicts the fact that $\sigma$ is a facet of $\Delta_2$. Therefore, if $a_{m+1} \in S$, then $b_{m+1} \in \sigma$, and this implies $\sigma \in K_1$.

    \item Suppose $\lcu a_m, b_m \rcu \not\subseteq \sigma$. Assume $a_m \in \sigma$ and $b_m \notin \sigma$; the argument for the opposite situation is identical. Since $\sigma \in \st_{\Delta_1}\lno b_m\rno$ (because $\sigma \in \Delta_2$), we have $\sigma \cup \lcu b_m \rcu \in \Delta_1$. This would imply $\sigma \cup \lcu b_m \rcu \in \Delta_2$, contradicting that $\sigma$ is a facet of $\Delta_2$. By the same reasoning, we obtain $\sigma \cap \lcu a_m, b_m \rcu = \emptyset$. Hence, the case $\lcu a_m, b_m \rcu \not\subseteq \sigma$ occurs only when $\sigma \cap \lcu a_m, b_m \rcu = \emptyset$.

    We now claim the following.

    \begin{claim}\label{claim: b_(m+1) in sigma iff sigma in K1}
    If $\sigma$ is a facet in $\Delta_2$, then $b_{m+1} \in \sigma$ if and only if $\sigma \in K_1$.
\end{claim}

\begin{proof}[Proof of \Cref{claim: b_(m+1) in sigma iff sigma in K1}]
    The implication $\sigma \in K_1 \Rightarrow b_{m+1} \in \sigma$ follows immediately from the definition of $K_1$. For the converse, we prove the contrapositive: if $\sigma \notin K_1$, then $b_{m+1} \in \sigma^c$.

    Assume toward a contradiction that $\sigma \notin K_1$ and $b_{m+1} \in \sigma$. Since $\sigma \in \st_{\Delta_1}\lno a_m \rno$, the complement $\lno \sigma \cup \lcu a_m \rcu \rno^c$ contains an independent set of size $3$, say $S_2 = \lcu x_1, x_2, x_3 \rcu$. Because $\sigma$ is a facet of $\Delta_2$, we must have $a_{m+1} \in S_2$. Let $x_3 = a_{m+1}$.

    If $b_m \notin S_2$, then $\lno \sigma \cup \lcu a_m, b_m \rcu \rno^c$ also contains $S_2$ as an independent set, which would imply $\sigma \cup \lcu a_m, b_m \rcu \in \Delta_2$, contradicting the fact that $\sigma$ is a facet of $\Delta_2$. Therefore $b_m \in S_2$, and we may write $ S_2 = \lcu x_1, b_m, a_{m+1} \rcu.$

    Now, using $\sigma \in \st_{\Delta_1}\lno b_m \rno$, we obtain that $\lno \sigma \cup \lcu b_m \rcu \rno^c$ contains an independent set of size $3$, say $S_3 = \lcu y_1, y_2, y_3 \rcu$. Since $\sigma \notin K_1$, we have $a_m \in S_3$. Let $y_3 = a_m$. Together with the assumption that $b_{m+1} \in \sigma$, this implies
    $\{y_1, y_2\} \cap \{a_m, b_m, a_{m+1}, b_{m+1}\} = \emptyset.$    
    In this situation, the set $\{y_1, y_2, a_{m+1}\}$ forms a size-$3$ independent set in $\lno \sigma \cup \lcu a_m, b_m \rcu \rno^c$, and therefore
    $ \sigma \cup \lcu a_m, b_m \rcu \in \Delta_2,$
    which contradicts the assumption that $\sigma$ is a facet of $\Delta_2$.
    
    This completes the proof of \Cref{claim: b_(m+1) in sigma iff sigma in K1}.
    \end{proof}

    Now, let $\sigma \in \Delta_2$ with $\sigma \notin K_1$. Then 
    \[
        \lno \sigma \cup \lcu a_m \rcu \rno^c \text{ contains } \lcu b_m, a_{m+1} \rcu
        \quad\text{and}\quad
        \lno \sigma \cup \lcu b_m \rcu \rno^c \text{ contains } \lcu a_m, b_{m+1} \rcu
    \]
    in every independent set of size $3$. Thus they require at most two additional vertices, say $x$ and $y$, to form such an independent set, giving the sets $\lcu x, b_m, a_{m+1} \rcu$ and $\lcu y, a_m, b_{m+1} \rcu$.
    
    It is easy to verify that $x = y$ works if and only if $x \in V\lno H_7 \rno$, in which case $\sigma \in K_2$. If $x = y$ does not work, then 
    \[
        \lcu x, y \rcu \subseteq \lcu a_{m-1}, b_{m-1}, c_m, c_{m+1} \rcu,
    \]
    with $x$ and $y$ adjacent because $\sigma \notin K_1$. Hence the pair $\lno x, y \rno$ must be one of the following:
    \begin{enumerate}[label = \roman*.]
        \item $\lno x, y \rno = \lno c_m, c_{m+1} \rno$, in which case $\sigma \in K_3$;
        \item $\lno x, y \rno = \lno a_{m-1}, b_{m-1} \rno$, in which case $\sigma \in K_4$.
    \end{enumerate}
    \end{enumerate}

Thus, if $\sigma \in \Delta_2$, then $\sigma \in \bigcup\limits_{i=1}^4 K_i$. This proves \Cref{claim: 1 (homotopy of total 3-cut of 3×n gg)}(1).

\item It is straightforward to check that
\begin{align*}
    K_2 \cap K_3 &= \left\langle \lcu a_{m-1}, b_{m-1} \rcu \right\rangle * \Delta_1^t \lno H_7 \rno \subseteq \Delta_2^t \lno H_2 \rno,\\
    K_2 \cap K_4 &= \left\langle \lcu c_{m}, c_{m+1} \rcu \right\rangle * \Delta_1^t \lno H_7 \rno \subseteq \Delta_2^t \lno H_2 \rno,\\
    K_3 \cap K_4 &= \left\langle V\lno H_7 \rno \right\rangle \subseteq \Delta_2^t \lno H_2 \rno.
\end{align*}
In each case above, we have $K_i \cap K_j \subseteq K_1$ for every $2 \leq i < j \leq 4$.
\end{enumerate}

This completes the proof of \Cref{claim: 1 (homotopy of total 3-cut of 3×n gg)}.
\end{proof}

We are now ready to prove \Cref{thm: homotopy of 3-cut complex of (3×n)-gg}. Following steps similar to those in the proof of \Cref{thm: homotopy type of total k-cut complex of 2n-gg}, to establish the result for $n = m+1$ we first compute the homotopy types of the link and deletion of the vertex $a_{m+1}$ in $\Delta_{m+1}^t$ (see \Cref{link of a_(m+1)} and \Cref{deletion of a_(m+1)} respectively). We then observe that the link of $a_{m+1}$ is contractible inside its deletion in $\Delta_{m+1}^t$, and therefore, by \Cref{lemma: bjorner's lemma 10.4(b)}, the desired result follows.

\begin{proof}[Proof of \Cref{thm: homotopy of 3-cut complex of (3×n)-gg}]
Recall that we are proving this result by induction. Assuming that the statement holds for all $n \leq m$, we aim to prove it for $n = m+1$.

We begin by computing the homotopy types of the link and deletion of the vertex $a_{m+1}$ in $\Delta_{m+1}^t$.

\medskip
\noindent\textbf{Link of the vertex $a_{m+1}$ in $\Delta_{m+1}^t$:}

Using \Cref{coro:link of a vertex} and \Cref{lemma: homotopy of total 3-cut of (3×n)gg'}, we obtain
\begin{align}\label{link of a_(m+1)}
    \lk_{\Delta_{m+1}^t}\lno a_{m+1} \rno
    = \totalthreecut \lno H_1 \rno
    \simeq \bigvee\limits_{\binom{2m-1}{2}} \bbS^{3m-4}.
\end{align}

\noindent\textbf{Deletion of the vertex $a_{m+1}$ in $\Delta_{m+1}^t$:}

Recall that $\Delta_1 = \del_{\Delta_{m+1}^t}\lno a_{m+1} \rno$. Using \Cref{lemma: del(a_n) = st(a_(n-1)) U  st(b_(n-1))}, we have
\[
    \Delta_1 = \st_{\Delta_1}\lno a_m \rno \cup \st_{\Delta_1}\lno b_m \rno.
\]

By \Cref{lemma: bjorner's lemma 10.4(b)}, it follows that
\[
    \Delta_1 \simeq \Sigma \lno \st_{\Delta_1}\lno a_m \rno \cap \st_{\Delta_1}\lno b_m \rno \rno.
\]
Hence, it suffices to compute the homotopy type of the intersection $\st_{\Delta_1}\lno a_m \rno \cap \st_{\Delta_1}\lno b_m \rno$.

Recall that $\Delta_2 = \st_{\Delta_1}\lno a_m \rno \cap \st_{\Delta_1}\lno b_m \rno$. Observe that the sets $K_i$ defined in \Cref{claim: 1 (homotopy of total 3-cut of 3×n gg)} are all contractible. Therefore, by \Cref{claim: 1 (homotopy of total 3-cut of 3×n gg)} together with \Cref{lemma: bjorner's lemma 10.4(b)}, we obtain
\[
    \Delta_2 \simeq 
    \bigvee\limits_{2 \leq i \leq 4}
    \lno \Sigma \lno K_1 \cap K_i \rno \rno.
\]

Therefore, to determine the homotopy type of $\Delta_2$, it is enough to compute the homotopy type of $K_1 \cap K_j$ for $j \neq 1$. Observe that
\begin{align*}
    K_1 \cap K_2
        &= \Delta_2^t \lno H_2 \rno
           \cap 
           \left\langle 
               \left\langle 
                   \lcu a_{m-1}, b_{m-1}, c_m, c_{m+1} \rcu 
               \right\rangle 
               * \Delta_1^t \lno H_7 \rno
           \right\rangle \\
        &= \Delta_2^t \lno H_2 \rno \setminus A_2.
\end{align*}
Here, $A_2$ consists of all subsets of $ \lcu a_{m-1}, b_{m-1}, c_m, c_{m+1} \rcu $ that contain a size two independent set and whose complement is a face of $\Delta_2^t \lno H_2 \rno$. Explicitly,
\begin{align*}
    A_2 = \Big\{
        \lcu b_{m-1}, c_m \rcu,\;
         \lcu a_{m-1}, c_m \rcu,\;
         \lcu a_{m-1}, c_{m+1} \rcu,\;
         \lcu b_{m-1}, c_{m+1} \rcu,\;
        \lcu a_{m-1}, b_{m-1}, c_m \rcu,\\
         \lcu a_{m-1}, b_{m-1}, c_{m+1} \rcu,\;
         \lcu a_{m-1}, c_m, c_{m+1} \rcu,\;
        \lcu b_{m-1}, c_m, c_{m+1} \rcu,\;
         \lcu a_{m-1}, b_{m-1}, c_m, c_{m+1} \rcu
    \Big\}.
\end{align*}

Thus, to determine the homotopy type of $K_1 \cap K_2$, it suffices to compute the homotopy type of $\Delta_2^t \lno H_2 \rno \setminus A_2$. From \Cref{thm: homotopy type of total 2-cut complex of (3×m)(1)}, we have
\[
    \Delta_2^t \lno H_2 \rno \simeq \bigvee\limits_{2m-4} \bbS^{3m-5}.
\]

Recall that the proof of \Cref{thm: homotopy type of total 2-cut complex of (3×m)(1)} was carried out using a Morse matching. In determining the homotopy type of $\Delta_2^t \lno H_2 \rno \setminus A_2$, we will use the same approach, except that we now exclude those faces of $\Delta_2^t \lno H_2 \rno$ whose complements lie in $A_2$.

After carefully looking at the sequence of element matchings in the proof of \Cref{thm: homotopy type of total 2-cut complex of (3×m)(1)}, we make the following observations:
\begin{itemize}
    \item The faces corresponding to $\lcu a_{m-1},c_{m},c_{m+1}\rcu^c$, $\lcu b_{m-1},c_{m},c_{m+1}\rcu^c$, $\lcu a_{m-1},b_{m-1},c_{m},c_{m+1}\rcu^c$ are matched with the faces $\lcu a_{m-1},c_{m}\rcu^c$, $\lcu b_{m-1},c_{m}\rcu^c$, $\lcu a_{m-1},b_{m-1},c_{m+1}\rcu^c$ respectively, through the element matching $M_{c_{m+1}}$.

    \item The face corresponding to $\lcu a_{m-1}, b_{m-1}, c_{m+1} \rcu^{c}$ is matched with $\lcu b_{m-1}, c_{m+1} \rcu^{c}$.
\end{itemize}

Moreover, note that the face corresponding to $\lcu a_{m-1}, c_{m+1} \rcu^{c}$ would normally be matched with the face corresponding to $\lcu a_{m-2}, a_{m-1}, c_{m+1} \rcu^{c}$ via the element matching $M_{a_{m-2}}$. Since the face $\lcu a_{m-1}, c_{m+1} \rcu^{c}$ is excluded, the face $\lcu a_{m-2}, a_{m-1}, c_{m+1} \rcu^{c}$ remains unmatched, contributing one additional critical cell. Thus, the total number of unmatched faces becomes $2m-4 + 1 = 2m-3$, and therefore
\[
    K_1 \cap K_2 \simeq \bigvee\limits_{2m-3} \bbS^{3m-5}.
\]

To compute $K_1 \cap K_3$, observe that
\begin{align*}
    K_1 \cap K_3
        &= \Delta_2^t \lno H_2 \rno
           \cap
           \left\langle 
               \left\langle V\lno H_7 \rno \right\rangle
               * \left\langle \lcu a_{m-1}, b_{m-1} \rcu \right\rangle
           \right\rangle \\
        &= \Delta_1^t \lno H_2 \cup \lno a_{m-1}, b_{m-1} \rno \rno \\
        &\simeq \bbS^{3m-5}.
\end{align*}

Similarly, to compute the homotopy type of $K_1 \cap K_4$, observe that
\begin{align*}
    K_1 \cap K_4
        &= \Delta_2^t \lno H_2 \rno
           \cap
           \left\langle 
               \left\langle V\lno H_7 \rno \right\rangle
               * \left\langle \lcu c_m, c_{m+1} \rcu \right\rangle
           \right\rangle \\
        &= \Delta_1^t \lno H_2 \cup \lno c_m, c_{m+1} \rno \rno \\
        &\simeq \bbS^{3m-5}.
\end{align*}

Therefore,
\begin{align*}
    \Delta_2
        = \st_{\Delta_1}\lno a_m \rno
          \cap
          \st_{\Delta_1}\lno b_m \rno
        \simeq 
        \bigvee\limits_{2 \leq i \leq 4}
            \lno \Sigma \lno K_1 \cap K_i \rno \rno
        \simeq 
        \bigvee\limits_{\,2m-1}
            \bbS^{3m-4}.
\end{align*}
Furthermore,
\begin{align}\label{deletion of a_(m+1)}
    \Delta_1 = \del_{\Delta_{m+1}^t}\lno a_{m+1} \rno
    \simeq \Sigma(\Delta_2)
    \simeq \bigvee\limits_{2m-1} \bbS^{3m-3}.
\end{align}

From \Cref{link of a_(m+1)}, the link of $a_{m+1}$ in $\Delta_{m+1}^t$ is homotopy equivalent to a wedge of spheres of dimension $3m-4$, and \Cref{deletion of a_(m+1)} shows that the deletion of $a_{m+1}$ in $\Delta_{m+1}^t$ is homotopy equivalent to a wedge of spheres of dimension $3m-3$. Hence, $\lk_{\Delta_{m+1}^t}\lno a_{m+1} \rno$ is contractible inside $\del_{\Delta_{m+1}^t}\lno a_{m+1} \rno$, and therefore, by \Cref{lemma: bjorner's lemma 10.4(b)}, we obtain
\begin{align*}
    \Delta_{m+1}^t
        &\simeq 
        \Bigl( \bigvee\limits_{2m-1} \bbS^{3m-3} \Bigr)
        \vee
        \Sigma \Bigl( \bigvee\limits_{\binom{2m-1}{2}} \bbS^{3m-4} \Bigr) \simeq 
        \bigvee\limits_{\binom{2m-1}{1} + \binom{2m-1}{2}}
            \bbS^{3m-3} \simeq 
        \bigvee\limits_{\binom{2m}{2}}
            \bbS^{3m-3}.
\end{align*}

This completes the proof of \Cref{thm: homotopy of 3-cut complex of (3×n)-gg}.
\end{proof}

\section{$k$-cut complex of $\left( 2 \times n \right) $ grid graph}\label{section: shellability of k-cut of 2n-gg}

In this section, we prove that the $k$-cut complex of $\twogg$ is shellable for all $n \geq 3$ and $3 \leq k \leq 2n-2$. Note that the upper bound $k = 2n-2$ is sharp, since for $k \geq 2n-1$ the complex $\kcut\lno \twogg \rno$ is a void complex. Recall the ordering used in the proofs of \Cref{thm: homotopy type of total 2-cut complex of (3×m)(1)} and \Cref{thm: homotopy type of total 2-cut complex of H1}. We again use the same ordering on the vertices of $\twogg$ (see \Cref{fig: graph of G(2×n)}):
\begin{itemize}
    \item If $i < j$, then $a_i < a_j$, $b_i < b_j$, $a_i < b_j$, and $b_i < a_j$.
    \item If $i = j$, then $a_i < b_j$.
\end{itemize}

Before discussing the shellability of $\kcut \lno \twogg \rno$, we first establish several results concerning the $k$-cut complexes of $\twogg$, $\twoggp$, and certain vertex deletions in these complexes. See \Cref{fig: graph of G'(2×n)} for the definition of the graph $\twoggp$.

\begin{lemma}\label{lemma: 2n is shedding vertex of augumented grid graph}
    For $\kcut \lno \twoggp \rno$, the vertex $a_{n+1}$ is a shedding vertex.
\end{lemma}

\begin{proof}
    Suppose that $a_{n+1}$ is not a shedding vertex of $\kcut \lno \twoggp \rno$. Then there exists a facet $\sigma \in \del_{\kcut \lno \twoggp \rno}\lno a_{n+1} \rno$ which is not a facet of $\kcut \lno \twoggp \rno$, i.e., there exists 
\[
    \tau \in \kcut \lno \twoggp \rno \quad\text{with}\quad \sigma \subsetneq \tau.
\]
If $a_{n+1} \notin \tau$, then $\tau \in \del_{\kcut \lno \twoggp \rno}\lno a_{n+1} \rno$ and $\sigma \subsetneq \tau$, contradicting the fact that $\sigma$ is a facet. If $a_{n+1} \in \tau$, consider the following cases:

\begin{enumerate}[label=\arabic*.]
    \item If $a_{n} \in \tau$, set $\sigma' = \lno \tau \setminus \{a_{n+1}\} \rno \cup \lcu x \rcu,$     where $x$ is some element of $\tau^c$. Then $\sigma' \in \del_{\kcut \lno \twoggp \rno}\lno a_{n+1} \rno$ and $\sigma \subsetneq \sigma'$.

    \item If $a_{n} \notin \tau$, set $\sigma' = \lno \tau \setminus \lcu a_{n+1} \rcu \rno \cup \lcu a_n \rcu.$ Then $\sigma' \in \del_{\kcut \lno \twoggp \rno}\lno a_{n+1} \rno$ and $\sigma \subsetneq \sigma'$.
\end{enumerate}

In both cases, we obtain a contradiction to the assumption that $\sigma$ is a facet. Therefore $a_{n+1}$ is a shedding vertex of $\kcut \lno \twoggp \rno$.
\end{proof}

\begin{lemma}\label{lemma: b_n+1 is shedding vertex}
    For $\kcut \lno \twoggn \rno$, the vertex $b_{n+1}$ is a shedding vertex.
\end{lemma}

\begin{proof}
    Suppose $b_{n+1}$ is not a shedding vertex of $\kcut \lno \twoggn \rno$. Then there exists a facet $\sigma$ of $\del_{\kcut \lno \twoggn \rno}\lno b_{n+1} \rno$ which is not a facet of $\kcut \lno \twoggn \rno$. Hence, there exists $\tau \in \kcut \lno \twoggn \rno$ with $\sigma \subsetneq \tau.$

    If $b_{n+1} \notin \tau$, then $\tau \in \del_{\kcut \lno \twoggn \rno}\lno b_{n+1} \rno$ and $\sigma \subsetneq \tau$, contradicting the fact that $\sigma$ is a facet. Assume now that $b_{n+1} \in \tau$. Consider the following cases:

    \begin{enumerate}[label=\arabic*.]
        \item If $a_{n+1}, b_n \in \tau$, set $\tau' = \lno \tau \setminus \{b_{n+1}\} \rno \cup \lcu x \rcu$ for some element $x \in \tau^c$.

        \item If only $a_{n+1} \in \tau$, set $\tau' = \lno \tau \setminus \lcu b_{n+1} \rcu \rno \cup \lcu b_n \rcu.$

        \item If only $b_n \in \tau$, set $\tau' = \lno \tau \setminus \lcu b_{n+1} \rcu \rno \cup \lcu a_{n+1} \rcu.$

        \item Suppose $a_{n+1}, b_n \notin \tau$ and $\tau = \lcu x_1, x_2, \dots, x_{2n+2-k} \rcu$ with $x_1 < x_2 < \dots < x_{2n+2-k}$. Clearly $x_{2n+2-k} = b_{n+1}$.  
        If $x_{2n+1-k} = a_i$ for some $i \leq n$, set $\tau' = \lno \tau \setminus \lcu b_{n+1} \rcu \rno \cup \lcu b_i \rcu.$
        If $x_{2n+1-k} = b_i$ for some $i < n$, set $\tau' = \lno \tau \setminus \lcu b_{n+1} \rcu \rno \cup \lcu a_{i+1} \rcu.$
    \end{enumerate}

    In all four cases, $\tau' \in \del_{\kcut \lno \twoggn \rno}\lno b_{n+1} \rno$ and $\sigma \subsetneq \tau'$, contradicting the assumption that $\sigma$ is a facet of the deletion. Therefore $b_{n+1}$ is a shedding vertex of $\kcut \lno \twoggn \rno$.
\end{proof}

	\begin{lemma}\label{lemma: b_n is shedding vertex}
		For $\del_{\kcut\lno \twoggp \rno}\lno a_{n+1} \rno$, the vertex $b_n$ is a shedding vertex.
	\end{lemma}
	
	\begin{proof}
		For simplicity, denote $\Delta' = \del_{\kcut\lno \twoggp \rno}\lno a_{n+1} \rno$. 
		Suppose $b_n$ is not a shedding vertex of $\Delta'$. Then there exists a facet $\sigma \in \del_{\Delta'}\lno b_n \rno$ that is not a facet of $\Delta'$, i.e., there exists $\tau \in \Delta'$ such that 
		$\sigma \subsetneq \tau$.
		
		If $b_n \notin \tau$, then $\tau \in \del_{\Delta'}\lno b_n \rno$ and 
		$\sigma \subsetneq \tau$, contradicting that $\sigma$ is a facet. 
		Assume now that $b_n \in \tau$. Consider the following cases:
		
		\begin{enumerate}[label=\arabic*.]
			\item If $a_n \in \tau$, set $\tau' = \lno \tau \setminus \{b_n\} \rno \cup \lcu x \rcu,$ where $x$ is some element of $\tau^c$. Then $\tau' \in \Delta'$ and $\sigma \subsetneq \tau'$.
			
			\item If $a_n \notin \tau$, set $\tau' = \lno \tau \setminus \lcu b_n \rcu \rno \cup \lcu a_n \rcu.$ 
			Then $\tau' \in \Delta'$ and $\sigma \subsetneq \tau'$.
		\end{enumerate}
		
		In both cases, $\tau' \in \del_{\Delta'}\lno b_n \rno$ and $\sigma \subsetneq \tau'$, again contradicting the assumption that $\sigma$ is a facet. Therefore, $b_n$ is a shedding vertex of $\Delta'$.
	\end{proof}

	\begin{lemma}\label{lemma: a_n+1 is shedding vertex of del(b_n+1)}
		For $\del_{\kcut\lno \twoggn \rno}\lno b_{n+1} \rno$, the vertex $a_{n+1}$ is a shedding vertex.
	\end{lemma}
	
	\begin{proof}
		For simplicity, denote $\Delta'' = \del_{\kcut\lno \twoggn \rno}\lno b_{n+1} \rno$. 
		Suppose $a_{n+1}$ is not a shedding vertex of $\Delta''$. Then there exists a facet $\sigma \in \del_{\Delta''}\lno a_{n+1} \rno$ that is not a facet of $\Delta''$, i.e., there exists $\tau \in \Delta''$ with $\sigma \subsetneq \tau$. If $a_{n+1} \notin \tau$, then $\tau \in \del_{\Delta''}\lno a_{n+1} \rno$ and $\sigma \subsetneq \tau$, contradicting that $\sigma$ is a facet.
		
		Consider now the case when $a_{n+1} \in \tau$. Suppose $\tau = \{x_1, x_2, \ldots, x_{2n+1-k}, a_{n+1}\}$ with $x_1 < x_2 < \dots < x_{2n+1-k} < a_{n+1}$. The analysis is separated into the following cases:
		
		\begin{enumerate}[label=\arabic*.]
			\item If $x_{2n+1-k} = a_n$ and $b_n \notin \tau$, set $\tau' = \lno \tau \setminus \lcu a_{n+1} \rcu \rno \cup \lcu b_n \rcu.$ 
						
			\item If $x_{2n+1-k} = b_n$ and $a_n \notin \tau$, set $\tau' = \lno \tau \setminus \lcu a_{n+1} \rcu \rno \cup \lcu a_n \rcu.$ 
					
			\item If $x_{2n+1-k} = b_n$ and $a_n \in \tau$, choose any 
			$x \in \tau^c$ with $x \neq b_{n+1}$ and set $\tau' = \lno \tau \setminus \lcu a_{n+1} \rcu \rno \cup \lcu x \rcu.$
			
			\item If $x_{2n+1-k} \neq a_n, b_n$, proceed as follows. 
			For $i \geq 1$:  
			if $x_{2n+1-k} = b_{n-i}$, take $x = a_{n-i+1}$; 
			if $x_{2n+1-k} = a_{n-i}$, take $x = b_{n-i+1}$. 
			Then set $\tau' = \lno \tau \setminus \lcu a_{n+1} \rcu \rno \cup \lcu x \rcu.$
		\end{enumerate}
		
		In every case above, $\tau' \in \del_{\Delta''}\lno a_{n+1} \rno$ and 
		$\sigma \subsetneq \tau'$, which is a contradiction because $\sigma$ is a facet of $\del_{\Delta''}\lno a_{n+1} \rno$. 
		Therefore $a_{n+1}$ is a shedding vertex of $\Delta''$.
	\end{proof}
	
	The following statement is an immediate corollary of \Cref{lemma:link for cut}.
	
	\begin{corollary}\label{lemma: link of a_n+1 in 2n'-gg in k-cut} $\lk_{\kcut\lno \twoggn\rno}\lno b_{n+1} \rno = \kcut\lno \twoggp \rno,
		\qquad
		\lk_{\kcut\lno \twoggp \rno}\lno a_{n+1} \rno = \kcut\lno \twogg \rno.$
	\end{corollary}
	
	We now turn to the shellability of the complex $\kcut\lno \twogg \rno$ for all $3 \leq k \leq 2n-2$. It is shown in \cite[Proposition 5.1]{bayer2024cutB} that the complex $\Delta_{2n-2}\lno \twogg \rno$ is shellable only when $n \geq 3$. Consequently, we focus on the range $3 \leq k \leq 2n-3$.

	\begin{theorem}\label{lemma: Shellability of 2n'gg}
		For $3 \leq k \leq 2n-3$, the complex $\kcut \lno \twogg \rno$ is shellable.
	\end{theorem}
	
	\begin{proof}
		We prove the result by induction on $n$. For $k=3$, the claim follows from \cite[Proposition~5.6]{bayer2024cut}. Hence, assume $k \geq 4$. In this range, it is straightforward to verify that $\Delta_k(\mathcal{G}_{2\times 3})$ is shellable, so the base case of the induction holds.
		
		Assume that the result is true for some $n$ and consider $n+1$. Our goal is to show that $\kcut \lno \twoggn \rno$ is shellable. By \Cref{lemma: b_n+1 is shedding vertex}, the vertex $b_{n+1}$ is a shedding vertex of $\kcut \lno \twoggn \rno$. By \Cref{def:shellable}, it suffices to show that both $\lk_{\kcut \lno \twoggn\rno}\lno b_{n+1} \rno$ and $\del_{\kcut \lno \twoggn\rno}\lno b_{n+1} \rno$ are shellable. For convenience, denote these complexes by $\K_1$ and $\K_2$ respectively.

	\begin{enumerate}
		\item \textbf{Shellability of $\K_1$:}
		
		From \Cref{lemma: link of a_n+1 in 2n'-gg in k-cut} we obtain $\K_1 = \kcut\lno \twoggp \rno$. It therefore suffices to show that $\kcut\lno \twoggp \rno$ is shellable. By \Cref{lemma: 2n is shedding vertex of augumented grid graph}, the vertex $a_{n+1}$ is a shedding vertex of $\kcut\lno \twoggp \rno$. Hence, by \Cref{def:shellable}, it is enough to prove that both the complexes $\lk_{\kcut\lno \twoggp \rno}\lno a_{n+1} \rno$ and $ \del_{\kcut \lno \twoggp \rno}(a_{n+1})$ are shellable.
		
		By \Cref{lemma: link of a_n+1 in 2n'-gg in k-cut}, we have $\lk_{\kcut\lno \twoggp \rno}\lno a_{n+1} \rno = \kcut\lno \twogg \rno,$ which is shellable by the induction hypothesis. It remains to show that $\del_{\kcut \lno \twoggp \rno}(a_{n+1})$ is shellable. From \Cref{lemma: b_n is shedding vertex}, the vertex $b_n$ is a shedding vertex of $\del_{\kcut\lno\twoggp\rno}\lno a_{n+1} \rno$. Consequently, by \Cref{def:shellable}, it is enough to verify that both the link and the deletion of $b_n$ in $\del_{\kcut\lno\twoggp\rno}\lno a_{n+1} \rno$ are shellable.
		
		\begin{itemize}
			\itemsep1em
			\item \textbf{Shellability of the link of $b_n$ in $\del_{\kcut\lno\twoggp\rno}\lno a_{n+1} \rno$:}
			
			Let $\Gamma$ denote the link of $b_n$ in $\del_{\kcut\lno\twoggp\rno}\lno a_{n+1} \rno$. We order the facets of $\Gamma$ as follows. First, list all facets that contain $a_n$ in lexicographic order, followed by the remaining facets in arbitrary order. Concretely, let
			\[
			F_1, F_2, \dots, F_l, F_{l+1}, \dots, F_t
			\]
			be the facets of $\Gamma$, where $a_n \in F_i$ for $1 \le i \le l$ and $a_n \notin F_i$ for $l+1 \le i \le t$, with $F_1,\dots,F_l$ arranged lexicographically. We claim that this is a shelling order.
			
			For any facet $F$ among $F_1,\dots,F_l$, we have $F^c = C \sqcup \lcu a_{n+1}, b_n \rcu,$	where $C$ is any subset of $V\lno \mathcal{G}_{2 \times \lno n-1 \rno}\rno$ of size $k-1$. The induced subgraph on $F^c$ is always disconnected, so every subset of size $2n-k-1$ of $V\lno \mathcal{G}_{2 \times \lno n-1 \rno}\rno$, together with $a_n$, forms a facet of $\Gamma$. This complex is the cone over the $\lno 2n-k-2 \rno$–skeleton of the $\lno 2n-3 \rno$–dimensional simplex with apex $a_n$, namely $	\cone_{a_n}\lno \lno \Delta^{2n-3}\rno^{\lno 2n-k-2 \rno} \rno,$ which is shellable by \Cref{join is shellable}. It is also immediate that $F_1, F_2,\dots, F_l$ forms a shelling order.
			
			For any $F_j$ with $l+1 \le j \le t$, the intersection $\displaystyle \lno \bigcup_{s=1}^{\,j-1} \langle F_s \rangle \rno \cap \langle F_j \rangle$ is pure of dimension $2n-k-2$, since every boundary facet of $F_j$ already appears among the facets $F_i$ with $1 \le i \le l$. In particular, for any $x \in F_j$, consider the facet $F_j' = \lno F_j \setminus \lcu x \rcu \rno \cup \lcu a_n \rcu.$ Note that $\lvert F_j \cap F_j' \rvert = \lvert F_j \setminus \lcu x \rcu \rvert$ and that $F_j' < F_j$ in the ordering defined above. This shows that the ordering is a shelling order. Hence, $\Gamma$ is shellable.
			
			\item \textbf{Shellability of the deletion of $b_n$ in $\del_{\kcut \lno\twoggp\rno}\lno a_{n+1} \rno$:}
			
			Let $\Gamma'$ denote the deletion of $b_n$ in $\del_{\kcut \lno\twoggp\rno}\lno a_{n+1} \rno$. If a facet $F$ of $\Gamma'$ contains $a_n$, then $F^c = D \sqcup \lcu a_{n+1}, b_n \rcu,$ where $D$ is any subset of $V\lno \mathcal{G}_{2 \times \lno n-1 \rno}\rno$ of size $k-2$. The induced subgraph on $F^c$ is always disconnected, since $a_n \in F$ while $a_{n+1}$ and $b_n$ lie in $F^c$. 
			
			We order the facets of $\Gamma'$ by listing first all facets that contain $a_n$ in lexicographic order, followed by the remaining facets in arbitrary order. Repeating the same argument used for $\Gamma$ above, we conclude that this ordering is a shelling order. Hence, $\Gamma'$ is shellable.
		\end{itemize}
			
	Hence, $\del_{\kcut\lno\twoggp\rno}\lno a_{n+1} \rno$ is shellable, and consequently $\K_1 = \lk_{\kcut \lno\twoggn\rno}\lno b_{n+1} \rno$ is shellable.

\item \textbf{Shellability of $\K_2$:}
			
			From \Cref{lemma: a_n+1 is shedding vertex of del(b_n+1)}, the vertex $a_{n+1}$ is a shedding vertex of $\K_2$. Hence it is enough to show that both $\lk_{\K_2}(a_{n+1})$ and $\del_{\K_2}\lno a_{n+1} \rno$ are shellable.
			
			\begin{itemize}
				\item \textbf{Shellability of $\lk_{\K_2}\lno a_{n+1} \rno$:}
			For any facet $F$ of $\lk_{\K_2}\lno a_{n+1} \rno$, the vertices $a_{n+1}$ and $b_{n+1}$ do not belong to $F$. Thus $F$ is a subset of $V \lno \twogg \rno$ of size $2n+1-k$. Write
			\[
			F = \lcu x_1, x_2, \dots, x_{2n+1-k} \rcu
			\quad\text{with}\quad
			x_1 < x_2 < \dots < x_{2n+1-k}.
			\]
			We order the facets of $\lk_{\K_2}\lno a_{n+1} \rno$ as follows:
			\begin{enumerate}[label = \roman*.]
				\item First list all facets with $x_{2n-k}=a_n$ and $x_{2n+1-k}=b_n$ in lexicographic order;
				\item then list all facets with $x_{2n-k}\neq a_n$ and $x_{2n+1-k}=b_n$ in any order;
				\item then list all facets with $x_{2n+1-k}=a_n$ in any order;
				\item next list all facets with $x_{2n-k}=a_{n-1}$ and $x_{2n+1-k}=b_{n-1}$ in lexicographic order;
				\item then list all facets with $x_{2n-k}\neq a_{n-1}$ and $x_{2n+1-k}=b_{n-1}$ in any order;
				\item then list all facets with $x_{2n+1-k}=a_{n-1}$ in any order, and continue in this manner until all facets have been ordered.
			\end{enumerate}
			
			Observe that for the facets containing both $a_n$ and $b_n$, one may choose any $2n-1-k$ elements from $V\lno \mathcal{G}_{2 \times \lno n-1 \rno}\rno$. These facets form the simplicial complex $\Delta_1 = \langle \{a_n,b_n\}\rangle * \lno \Delta^{2n-3}\rno^{\lno 2n-2-k \rno},$ which is shellable by \Cref{join is shellable}, since $\lno \Delta^{2n-3}\rno^{\lno 2n-2-k \rno}$ is shellable by \Cref{shellability of skeleton}. The lexicographic ordering of these facets is clearly a shelling order.
			
			Now consider any facet $F_j = \lcu x_1, x_2, \dots, x_{2n+1-k} \rcu$ from the ordering. We show that this facet satisfies the shelling condition.
			
			\begin{enumerate}
				\item If $x_{2n-k}=a_n$ and $x_{2n+1-k}=b_n$, then the claim follows immediately.
				
				\item Let $x_{2n-k}\neq a_i$ and $x_{2n+1-k}=b_i$ for some $i \leq n$. In this case, every boundary facet of $\langle F_j\rangle$ except $F_j \setminus \{b_i\}$ appears earlier in the ordering in a facet containing $\{a_i,b_i\}$. Hence, any arrangement of these facets produces a valid shelling order.
				
				\item Let $x_{2n+1-k}=a_i$ for some $i \leq n$. This situation is similar to the previous case, except that $F_j \setminus \{b_i\}$ may also appear earlier in the ordering. This does not affect the shelling condition, so the ordering defined above still gives a shelling order.
				
				\item Let $x_{2n-k}=a_{i-1}$ and $x_{2n+1-k}=b_{i-1}$ for some $i \leq n$. In this case, all boundary facets of $\langle F_j\rangle$ appear earlier in the ordering. In particular,
				
				\begin{enumerate}
					\item For $x=b_{i-1}$, consider $F'_j = \lno F_j \setminus \lcu x \rcu \rno \cup \lcu b_i \rcu.$ Clearly, $F'_j < F_j$.
					
					\item For $x=a_{i-1}$, consider $F'_j = (F_j \setminus \lcu x \rcu) \cup \lcu a_i \rcu.$ Again, $F'_j < F_j$.
					
					\item For $x \neq a_{i-1}, b_{i-1}$, consider $F'_j = \lno F_j \setminus \lcu x \rcu \rno \cup \lcu a_i \rcu$ or $	F'_j = (F_j \setminus \lcu x \rcu) \cup \lcu b_i \rcu.$	In both cases, $F'_j < F_j$.
				\end{enumerate}
				Hence, $\lk_{\K_2}\lno a_{n+1} \rno$ is shellable. 
				
			\end{enumerate}
			
			\item \textbf{Shellability of $\del_{\K_2}\lno a_{n+1} \rno$:}
			For any facet $F$ of $\del_{\K_2}\lno a_{n+1} \rno$, the vertices $a_{n+1}$ and $b_{n+1}$ do not belong to $F$. Hence $F$ is a subset of $V\lno \twogg \rno$ of size $2n+2-k$. Write
\[ F = \lcu x_1, x_2, \dots, x_{2n+2-k} \rcu
\quad\text{with}\quad
x_1 < x_2 < \dots < x_{2n+1-k} < x_{2n+2-k}.
\]
We order the facets of $\del_{\K_2}\lno a_{n+1} \rno$ as follows:
\begin{enumerate}[label = \roman*.]
\item First list all facets with $x_{2n+1-k}=a_n$ and $x_{2n+2-k}=b_n$ in lexicographic order;
\item then list all facets with $x_{2n+1-k}\neq a_n$ and $x_{2n+2-k}=b_n$ in any order;
\item then list all facets with $x_{2n+2-k}=a_n$ in any order;
\item next list all facets with $x_{2n+1-k}=a_{n-1}$ and $x_{2n+2-k}=b_{n-1}$ in lexicographic order;
\item then list all facets with $x_{2n+1-k}\neq a_{n-1}$ and $x_{2n+2-k}=b_{n-1}$ in any order;
\item then list all facets with $x_{2n+2-k}=a_{n-1}$ in any order, and continue in this manner until all facets have been listed.
\end{enumerate}

Using the same arguments as in the case of $\lk_{\K_2}\lno a_{n+1} \rno$, we conclude that $\del_{\K_2}\lno a_{n+1} \rno$ is also shellable.	
		\end{itemize}
		\end{enumerate}
		
		This completes the proof of \Cref{lemma: Shellability of 2n'gg}.
	\end{proof}

\section*{Acknowledgements} We thank the anonymous referees for their constructive comments and suggestions, which significantly improved the paper. We are also grateful to Sheila Sundaram for her insightful and constructive feedback on the earlier draft of this manuscript. We also extend our thanks to Krishna Menon for engaging in numerous discussions on relevant topics. Himanshu Chandrakar gratefully acknowledges the assistance provided by the Council of Scientific and Industrial Research (CSIR), India, through grant 09/1237(15675)/2022-EMR-I. Anurag Singh is partially supported by the Start-up Research Grant SRG/2022/000314 from SERB, DST, India.

\bibliographystyle{abbrv}

{\Addresses}

\end{document}